\newcommand{%
	\def\svgwidth{1\columnwidth}
	\import{./figures/}{.pdf_tex}
}[2][1]{%
	\def\svgwidth{#1\columnwidth}
	\import{./figures/}{#2.pdf_tex}
}
\newcommand{\old}[1]{#1} 
\newcommand{\olc}[1]{{#1}} 
\def\FL{X_L^F}
\DeclareRobustCommand{\SkipTocEntry}[5]{}
\newcommand{\datogliere}[1]{} 
\newtheorem{Thm}{Theorem}[subsection]
\newtheorem{Lem}[Thm]{Lemma}
\newtheorem{Prop}[Thm]{Proposition}
\newtheorem{Cor}[Thm]{Corollary}
\theoremstyle{definition}
\newtheorem{df}[Thm]{Definition}
\newtheorem{choice}[Thm]{Choice}
\newtheorem{es}[Thm]{Example}
\newtheorem{Rem}[Thm]{Remark}
\newcommand{\Z}{\mathbb{Z}}
\newcommand{\CCC}{{\mathbb{C}}}
\newcommand{\R}{{\mathbb{R}}}
\newcommand{\Q}{{\mathbb{Q}}}
\newcommand{\B}{{\mathcal{B}}}
\newcommand{\Sc}{{\mathcal{S}}}
\newcommand{\Cc}{{\mathcal{C}}}
\newcommand{\Fc}{{\mathcal{F}}}
\newcommand{\Tc}{{\mathcal{T}}}
\newcommand{\rk}{{\operatorname{rk}}}
\newcommand{\GC}{\textstyle\int}
\newcommand{\Sal}{\operatorname{Sal}}
\newcommand{\into}[0]{\hookrightarrow}
\newcommand{\id}{\mathrm{id}}
\newcommand{\A}{{\mathcal{A}}}
\newcommand{\Ar}[1]{\A[#1]}
\newcommand{\gre}[1]{\vert #1 \vert}
\newcommand{\Hom}{\operatorname{Hom}}
\newcommand{\Aup}{\A^{\upharpoonright}}
\let\nsubset\relax
\newcommand{\op}{\operatorname{op}}
\begin{document}

\def\vg{\Lambda}	
\def\vd{\Omega}

\def\wl{\widehat{\lambda}}
\def\wg{\widehat{\omega}}

\def\T{\mathcal{T}}

\def\la{\langle}
\def\ra{\rangle}

\def\AL{\overline{\A}_{L}}
\def\AS{\overline{\A}_{S}}
\def\incl{\varphi}
\def\ideal{\mathcal{I}}

\def\G{\mathcal{G}}
\def\NN{\mathcal{N}}

\def\Stab{\operatorname{Stab}}
\def\supp{\operatorname{supp}}

\newcolumntype{C}{>{$}c<{$}}
\def\II{\mathcal{I}}
\def\JJ{\mathcal{J}}
\def\Gr{\operatorname{Gr}}
\def\tt{{\overline{t}}}

\newcommand{\ftodo}[1]{\marginnote{\begin{minipage}{3.1cm}\todo[inline]{#1}\end{minipage}}}

\title[Combinatorial representatives for toric arrangements]{Combinatorial generators for the cohomology of toric arrangements}
\author{Filippo Callegaro and Emanuele Delucchi}
\address[F. Callegaro]{
	Dipartimento di Matematica, University of Pisa, Italy.
}
\email{callegaro@dm.unipi.it}

\address[E. Delucchi]{University of Applied Arts and Sciences of Southern Switzerland.} 
\email{emanuele.delucchi@supsi.ch}

\begin{abstract}
We give a new combinatorial description of the cohomology ring structure of $H^*(M(\A);\Z)$ of the complement $M(\A)$ of a real complexified toric arrangement $\A$ in $(\CCC^*)^d$. In particular, we correct an 
error in the paper \cite{caldel17} .


\end{abstract}
\maketitle
\vspace{-3em}
\setcounter{tocdepth}{2}
\tableofcontents


\section{Introduction}\label{sec:algebra}


Research on the topology of configuration spaces gave rise to the study of a more general class of topological spaces, i.e., complements of arrangements of submanifolds. The general question in this field is to determine the topology of the complement of an arrangement of submanifolds in an ambient manifold, based on the combinatorial (i.e., incidence) data of the submanifolds themselves. The case of hyperplanes in real and complex vector spaces spurred a rich field of research. A main result on  complements of arrangements of hyperplanes is the presentation of the integer cohomology algebra by Orlik and Solomon \cite{os80} in terms of the arrangement's matroid. As combinatorial models for the homotopy types of hyperplane arrangement's complements have been developed, Orlik and Solomon's work has been enhanced by the study of explicit combinatorial generators of the integer cohomology of the arrangement's complement, see e.g., \cite{BjornerZiegler,FeichtnerZiegler,GorMacph}. In other words, starting from a cellular model for the arrangement's complement, one seeks to explicitly identify some subcomplexes representing cohomology classes that generate the cohomology groups (or the cohomology algebra) over the integers.

This paper focusses on arrangements of subtori in complex tori (see Section \ref{sec:setup} for a precise definition). Fresh impulse for this further chapter in the study of arrangements of submanifolds was given by work of De Concini and Procesi. Among other things, they obtained first results on the computation of the cohomology algebra of the complement. This 
program was followed up in the literature \cite{
dd2,bibby, dupont,a5,caldel17}. 
In particular, an explicit and general presentation of the rational and integer cohomology algebra of the complement to a toric arrangement was given in \cite{a5, a5corrigendum}, in terms of algebraic-combinatorial data of the arrangement. Moreover, for another bit of analogy with the theory of hyperplane arrangements, combinatorial cellular models for the complement of toric arrangements (so-called ``toric Salvetti complexes") have been developed in \cite{mocisette,DaDe1}. In \cite{caldel17} the attempt has been made to present the integer cohomology algebra of toric arrangements with explicit combinatorial generators given by subcomplexes of those models. However, some of the arguments given in \cite{caldel17} do not hold in the generality that is claimed in that paper.

Here we approach anew the problem of finding explicit combinatorial generators for the cohomology of (complexified) toric arrangements. We  give a constructive definition of two types of subcomplexes of the``toric Salvetti complex'' of a given complexified toric arrangement, and we show that they represent cohomology classes that generate the (integer) cohomology ring. Our results in this paper hold for general ``complexified" toric arrangements. 

In addition to correcting the flawed part of \cite{caldel17}, our construction should pave the way for an easier  explicit computational exploration of examples.

\bigskip

We start off outlining the main definitions and recalling some generalities about linear and toric arrangements - this is the content of Section \ref{sec:setup}. Then in Section \ref{sec:subcomplexes} we construct the subcomplexes of the toric Salvetti complex of a complexified toric arrangement that, in Section \ref{sec:cohom}, are shown to be representatives of generating classes for the arrangement complement's cohomology ring. We close by illustrating the construction and the computations with an example of an arrangement in the $2$-dimensional torus.

\section{Toric arrangements} \label{sec:setup}

We start by recalling some basics on the topology and combinatorics of toric arrangements. We will confine ourselves strictly to what is needed for the remainder of the paper. For a more thorough treatment (including context and more examples) we refer to \cite{DeCPlibro} (for general setup, \S\ref{setup1} and \S\ref{setup2}), \cite{BLSWZ} 
(for \S\ref{setup2}, \S\ref{setup4}), 
\cite{Salvetti} 
for \S\ref{setup5}, \cite[Appendix]{delpag}
 (for \S\ref{setupFC}) and \cite{caldel17} (for \S\ref{setup3} \S\ref{setup6}).

\subsection{Arrangements: linear and toric} \label{setup1}
An {\em arrangement} is a locally finite set $\A$ of submanifolds of a given manifold $X$. The {\em complement} of the arrangement is then the space
$$M(\A):=X \setminus \bigcup \A.
$$
We call $\A$ a ``real'' arrangement of hyperplanes if $X$ is $\mathbb R^d$ and $\A$ consists of affine hyperplanes, i.e., levelsets of linear forms. The {\em complexification} of such a real arrangement is the arrangement $\A_{\mathbb C}$ in $\mathbb C^d$ whose defining forms are the same as those for $A$.

In the case where the ambient space is $T\simeq(\mathbb C^*)^d$, a complex torus, we let a \emph{toric arrangement} $\A$ be a finite collection $\{ Y_1, \ldots, Y_n \}$, where for every $i$ we define $Y_i := \chi^{-1}(a_i)$, with $\chi_i \in \operatorname{Hom}(T,\mathbb{C}^*)$, $a_i \in \mathbb{C}^*$. We say that the arrangement $\A$ is \emph{complexified} if $a_i \in S^1$ for all $i$. In the latter case, the toric arrangement induces an arrangement in the compact torus $(S^1)^d$.
Note that the universal cover of the complex torus $T$ is a real Euclidean space, where we have  an arrangement of affine hyperplanes $\A^\upharpoonright$ obtained by lifting $\A$. The universal covering morphism of $T$ restricts to a map $q$ from the complement $M(\A^\upharpoonright)$ to $M(\A)$. 

\subsection{Layers and faces} \label{setup2}
Let $\A$ be either an arrangement of hyperplanes or a toric arrangement, and let $X$ denote the ambient space.
A {\em layer} of an arrangement $\A$ is any connected component of an intersection of elements of $\A$. We call $\Cc$ the set of all layers, partially ordered by reverse inclusion. The arrangement $\A$ is called {\em essential} if the maximal elements of $\Cc$ have dimension $0$. The poset $\Cc$ is ranked (by codimension of the layers, when $\A$ is essential) and we denote by $\Cc_i$ the set of elements of $\Cc$ with rank $i$.
To each $L\in \Cc$ we can associate the arrangement $\A_L=\{H\in \A \mid L\subseteq H\}$ in $X$ and the arrangement $\A^L=\{H\cap L \mid H\not\in \A_L\}$ determined by $\A$ in the space $L$.

\begin{Rem}
	In the case of an arrangement of hyperplanes, every nonempty intersection is connected. We thus often speak of ``intersections" or ``flats" when referring to layers. This terminology is motivated by the fact that, if the hyperplane arrangement is {\em central} (i.e., if all hyperplanes are linear subspaces), the poset of all intersections is the lattice of flats of a matroid.
\end{Rem}

\subsection{Real arrangements and sign vectors}\label{setup4} 
\def\H{\mathcal H}
Let $\H$ be a real arrangement of affine hyperplanes in $\mathbb R^d$.
 After choosing a ``positive side'' of each hyperplane we can associate to every point $x\in \mathbb R^d$   a {\em sign vector} $\gamma_x\in \{0,+,-\}^{\H}$ whose value on any hyperplane $H$ is $0,+,-$ according to whether $x$ lies on $H$, on the positive side of $H$ or on the negative side of $H$. A {\em face} of $\H$ is then the set of all $x\in \mathbb R^d$ with a fixed sign vector. The set $\Fc(\H)$ of all faces is partially ordered by inclusion of topological closures. The top-dimensional faces are called {\em chambers} and the set of chambers is denoted by $\Tc(\H)$.  The set of hyperplanes that separate two given chambers $C_1,C_2$ is denoted by $S(C_1,C_2)$. Any codimension-one face of a chamber $C$ is called a {\em wall} of $C$. A {\em gallery} is then an ordered sequence $C_1,W_1,C_2,\ldots$ where all $C_i$ are chambers and every $W_i$ is a wall of both $C_i$ and $C_{i+1}$.

 \begin{df} \label{df:flat_face}
 	Let $F \in \Fc(\H)$ a face of $\H$ and $X \in \Cc(\H)$ a layer. We define the face $F_X \in \Fc(\H_X)$ as the unique face such that for all $H \in \H_X$ $\gamma_{F_X}(H) = \gamma_{F}(H)$.
 	
 	For $F,G \in \Fc(\H)$, we define $F_G \in \Fc(\H)$ as the unique face such that
 	$$
 	\gamma_{F_G}(H)= \left\lbrace 
	\begin{array}{ll}
\gamma_F(H) & \mbox{if }\gamma_G(H) = 0;\\ \gamma_G(H) & \mbox{otherwise.}
	\end{array}
 	\right.
 	$$
 \end{df}

\subsection{Toric Arrangements and face category}\label{setupFC} Let $\A$ be a toric arrangement. 
We denote by $\Fc(\A)$ the {\em face category}  of $\A$, whose objects are all faces of the induced (polyhedral) cellularization of the compact torus and where  morphisms $F\to G$ correspond to the boundary cells of $G$ attached to $F$ (see \cite[Remark~3.3]{dd2}).
Given any $F\in \Fc(\A)$, there is a unique minimal layer containing $F$, called the support of $F$ and denoted by $\operatorname{supp}(F)$. 

\begin{Rem}\label{remark_polyhedral}
Note that the category $\Fc(\A)$ is the quotient (in the category of acyclic categories) of the poset $\Fc(\A^\upharpoonright)$ by the group action induced by the deck transformations of the covering $\mathbb C^d\to T$. In particular, $\Fc(\A)$ is the face category of a polyhedral CW complex in the sense of \cite{delpag} and its slice categories are posets of faces of polytopes (\cite[Lemma A.19]{delpag}).
\end{Rem}

\begin{Rem} Recall that to every acyclic category $\mathcal X$ is associated a cellular complex called the {\em nerve} $\gre{\mathcal X}$. If $\mathcal X$ is the face category of a polyhedral CW complex, then its nerve is homeomorphic to the complex itself.

We will follow established use and, throughout, talk about topological properties of an acyclic category (or a poset) referring to the topology of its nerve.
\end{Rem}

\subsection{Local hyperplane arrangements}\label{setup3} 
Let $\A$ denote a toric arrangement in the complex torus $T$.
To every face $F\in \Fc(\A)$
we associate a real arrangement of linear hyperplanes $\A[F]$, i.e., the real 
part of the (complexified) hyperplane arrangement defined by $\A$ in the tangent space to $T$ at any point in the relative interior of $F$. Moreover, associated to the toric arrangement $\A$ we consider an ``abstract'' arrangement of hyperplanes in $\mathbb R^d$ that we call $\A_0$, which can be thought of as the union (without repetitions) of all $\A[F]$ where $F$ ranges in $\Fc(\A)$. 
Key is the fact that, for every $F\in \Fc(\A)$,
$\A[F]$ is a subarrangement of $\A_0$. In particular, for every layer $L\in \Cc$ there is a subspace $X_L\in \Cc{} (\A_0)$ defined as the intersection of the hyperplanes associated to hypertori containing $L$. Given any $F\in\Fc(\A)$, we let $\FL$ be the smallest flat of $\A[F]$ containing $X_L$.


For every $F\in \Fc(\A)$ we will consider the poset of faces $\Fc(\A[F])$ and the set $\Tc(\A[F])$ of chambers of the (``local'') real arrangement $\A[F]$.
For every morphism $m:F\to G$ in $\Fc(\A)$ there is a natural inclusion $i_m:\Fc(\A[G])\to \Fc(\A[F])$ and in particular we call $F_m$ the image of the minimal element of  $\Fc(\A[G])$, see \cite[\S \olc{4.1}]{caldel17}. The map $i_m$ can be defined in terms of sign vectors.
\begin{df}\label{df:i_m} 
	 The map $i_m$ is determined as follows:
	$$
	\gamma_{i_m(K)}(H):= \left\lbrace 
	\begin{array}{ll}
	\gamma_{F_m} (H) & \mbox{if } H \notin \A[G]\\
	\gamma_K(H) & \mbox{else.}
	\end{array}
	\right.$$
\end{df}


\subsection{Affine Salvetti complex}\label{setup5}
Given a real 
arrangement $\H$ of affine hyperplanes, say in a real vector space $V,$ one can construct the associated Salvetti complex $\Sal(\H)$, which models the homotopy type of the complement of the complexification of $\H$  
(in the ambient - complex - vector space $V\otimes \mathbb C \simeq V +  iV$).
A natural construction of $\Sal(\H)$ is as the order complex of the partially ordered set
$$
\Sc(\H):=\{[G,C]\in \Fc(\H)\times \Tc(\H) \mid G\leq C\},
$$
 partially ordered via $[G,C]\geq [G',C']$ if $C_{G'}=C'$ (this means: no hyperplane containing $G'$ separates $C$ from $C'$, see 
 \cite{Salvetti}).


The gist now is that any choice of a point $p([G,C])$ in every open, convex subset $G +  iC$, where $[G,C]\in \Sal(\H)$, can be extended affinely to  a simplicial map $p: \Sal(\H) \to V\otimes \mathbb C $ (i.e., the simplex spanned by a chain $x_1<\ldots<x_k$ in $\Sal(\H)$ is mapped affinely to the simplex $\operatorname{conv}\{p(x_i)\}_i$ in $V\otimes \mathbb C$) and this extension defines an inclusion $\Sal(\H)\hookrightarrow M(\H_{\mathbb{C}})$ that is a homotopy equivalence.

Let us now go back  to the special arrangements of the type $\A[F]$, $F\in \Fc(\A)$, where $\A$ is a toric arrangement. For each chamber $C$ of $\A[F]$ we consider the subposet 
$$\Sc_C := \{ [G,K]\in \Sc(\A[F]) \mid  K=C_G\}$$
It will be useful to stratify $\Sc(\A[F])$ via the subposets
\begin{equation}\label{def_subposets_local}
\Sc^G(\A[F]) :=\bigcup_{C\geq G} \Sc_C,
\end{equation}
one for each $G\in \Fc(\A[F])$. For details on these constructions see \cite[\S 3.3]{caldel17}.

\subsection{Toric Salvetti complex}\label{dhcl}\label{setup6} 
Returning to the toric arrangement $\A$, a model for the complement of $M(\A)$ can be obtained from the following diagram. 
$$
\begin{array}{rccl}
\mathscr D: & \Fc(\A)^{op}& \to & \textrm{Posets} \\
 & F & \mapsto & \mathscr D(F):=\Sc(\A[F])\\
 & m: F\to G & \mapsto & \mathscr D(m):\Sc(\A[G])\to \Sc(\A[F])\\
 &&&[G,K]\mapsto [i_m(G),i_m(K)] 
\end{array}
$$
\begin{df}
Define the category $\Sc(\A):=\int\mathscr D$  (here $\int\mathscr D$ is the "Grothendieck construction" studied by Thomason \cite{Thomason}). Then, let $\Sal(\A)$ be the cell complex $\gre{\Sc(\A)}$, which can be realized as the quotient $q(\Sal(\Aup))$ and, thus, as a subset of $M(\A)$.
\end{df}

\begin{Prop}[{\cite{dd2}}]
 $\Sal(\A)$ is homotopy equivalent to $M(\A)$.  
\end{Prop}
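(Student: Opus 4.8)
The plan is to reduce the statement to the affine Salvetti theorem of \S\ref{setup5} by passing to the universal cover of the torus, the essential point being that $\A$ is complexified. Writing the covering $\CCC^d\to T$ as $z\mapsto(e^{2\pi i z_1},\dots,e^{2\pi i z_d})$, each $\chi_i$ corresponds to an integral linear form $\ell_i$ and, because $a_i=e^{2\pi i c_i}$ with $c_i\in\R$, the preimage of $Y_i$ is the complexification of the family of real affine hyperplanes $\{\ell_i=c_i+k\}_{k\in\Z}$. Hence the preimage of $\bigcup\A$ is exactly the complexification of the locally finite, $\ZZZ$-periodic real affine hyperplane arrangement $\Aup$ obtained by lifting $\A$, and $q\colon M(\Aup)\to M(\A)$ (with $M(\Aup)$ the complement of that complexification, in the sense of \S\ref{setup5}) is a covering map whose deck group $\ZZZ$ acts freely and properly discontinuously by the translations $z\mapsto z+n$. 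It is precisely here that complexifiedness enters: for $a_i\notin S^1$ the preimage would not be the complexification of a real arrangement, and there would be no affine Salvetti complex to lift.

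Granting this, I would proceed in three steps. First, apply the affine Salvetti theorem to the locally finite arrangement $\Aup$, obtaining a deformation retraction of $M(\Aup)$ onto the subcomplex $\Sal(\Aup)=\gre{\Sc(\Aup)}$. Second, observe that this retraction can be taken $\ZZZ$-equivariantly: the poset $\Sc(\Aup)$ is built purely from the faces and chambers of $\Aup$, which form a $\ZZZ$-set, and Salvetti's piecewise-linear embedding $\Sal(\Aup)\into M(\Aup)$ together with its deformation retraction are determined by a choice of one point $p([G,C])\in G+iC$ for each $[G,C]$, which one makes equivariant by choosing one point per $\ZZZ$-orbit and translating. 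Third, since $\ZZZ$ acts freely and properly discontinuously on $M(\Aup)$ it does so on the subcomplex $\Sal(\Aup)$, and an equivariant deformation retraction of free $\ZZZ$-CW complexes descends to the quotients; hence $M(\A)=M(\Aup)/\ZZZ$ deformation retracts onto $q(\Sal(\Aup))=\Sal(\Aup)/\ZZZ$. By Remark~\ref{remark_polyhedral} and the constructions of \S\ref{setup3}--\S\ref{setup6}, this quotient is canonically the cell complex $\gre{\Sc(\A)}=\gre{\int\mathscr D}=\Sal(\A)$, which is the claim.

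There is an alternative, more self-contained route that foregrounds the Grothendieck-construction definition. One covers $M(\A)$ by open sets $V_F$, indexed by $F\in\Fc(\A)$, where $V_F$ is the part of $M(\A)$ lying in a small tube transverse to the relatively open cell of the torus cellularization labelled by $F$, chosen so that $F\mapsto V_F$ is a diagram over $\Fc(\A)^{\op}$ whose union is $M(\A)$. Local triviality of $\A$ transverse to each cell identifies $V_F$, up to homotopy, with the complement $M((\A[F])_{\CCC})$ of the local linear arrangement, hence by \S\ref{setup5} with $\gre{\Sc(\A[F])}=\mathscr D(F)$, and one checks this identification intertwines the structure maps $i_m$ with $\mathscr D(m)$. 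A standard homotopy-colimit (nerve) argument then yields $M(\A)\simeq\hocolim_{\Fc(\A)^{\op}}\{V_F\}\simeq\hocolim_{\Fc(\A)^{\op}}\mathscr D$, and Thomason's theorem \cite{Thomason} identifies the right-hand side with $\gre{\int\mathscr D}=\Sal(\A)$.

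I expect the main obstacle, in both routes, to be the same compatibility bookkeeping rather than any deep point. In the first route it is the verification that the affine Salvetti deformation retraction of the infinite periodic arrangement $\Aup$ can genuinely be chosen $\ZZZ$-equivariantly, and that the induced cellular quotient $\Sal(\Aup)/\ZZZ$ agrees cell-by-cell with the nerve of $\int\mathscr D$ as assembled from the maps $i_m$; in the second route it is showing that the cover $\{V_F\}$ is ``good enough'' for its homotopy colimit to compute $M(\A)$ and that the local identifications $V_F\simeq\mathscr D(F)$ are natural in $F$. Once either of these is in place, the conclusion follows formally from the affine Salvetti theorem and, for the second route, Thomason's theorem.
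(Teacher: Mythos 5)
The paper does not prove this proposition; it simply cites \cite{dd2}, having already built the covering-space picture into the definition of $\Sal(\A)$ as $q(\Sal(\Aup))\subseteq M(\A)$. Your first route — lift to the periodic affine arrangement $\Aup$, apply the affine Salvetti theorem equivariantly with respect to the $\ZZZ$ deck action, and descend to the quotient — is precisely the argument that this definition anticipates and that the cited reference carries out; the second (nerve-of-cover/homotopy colimit via Thomason) is the other standard route and is also sound. Two minor points worth tightening: the affine Salvetti theorem must be invoked in its locally-finite form (the $\ZZZ$-periodic case), and the equivariance you need is not literally a consequence of choosing the points $p([G,C])$ equivariantly — that choice makes the \emph{embedding} equivariant, after which one either constructs the retraction equivariantly or simply invokes equivariant Whitehead for the free $\ZZZ$-CW structures; you flag both of these as bookkeeping, which is the right assessment.
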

Crucial to our discussion will be the following type of subcategories of $\int \mathscr D$. 
\begin{df}\label{df:subcatD}
For every $Y\in \Cc$ and every $F_0\in \Fc(\A_0)$ whose linear hull $\vert F_0\vert$ is $X_Y$, consider the subdiagram $\mathscr D_{Y,F_0}$ of $\mathscr D$ induced on the subcategory $\Fc(\A^Y)$ of $\Fc(\A)$ by the subposets $\mathscr D_{Y,F_0}(F):= \Sc^{F_0}(\A[F])$. In analogy to \cite[Definition \old{4.2.6}, Definition \old{4.2.8}]{caldel17} we set 
$$\Sc_{Y,F_0}:=\int \mathscr D_{Y,F_0}.$$
\end{df}
\begin{Rem} \label{htprod}
The homotopy type of $\Sc_{L,F_0}$ is that of  $|\Fc(\A^L)|\times \Sal(\A[L])$ 
(see 
\cite[Lemma \olc{4.2.15}]{caldel17}),
where $|\Fc(\A^L)|$ is the geometric realization of the order complex associated to the category $\Fc(\A^L)$.
\end{Rem}


	



\section{Combinatorial generators}
\newcommand{\newnot}[2]{\langle #1 : #2 \rangle}
\label{sec:subcomplexes}

In this section we aim at defining (cellular) representatives for certain homology classes of the complement $M(\A)$.

Intuitively, these classes will be of two types. The first type of classes, called $\wl_B^M$, will represent cycles that are parallel to $1$-dimensional layers $M$ and lie in the  subcomplexes 
$\gre{\Sc_{L,F_0}}$, with $M \subset L$, under certain conditions on $F_0$, $L$ and $B$ (see Remark \ref{rem:contenimenti} below). The second type are classes $\wg_H$, representing loops around a codimension-$1$ layer $H\in \A$.

\subsection{The generators  $\wl^M_B$}


\begin{df}\label{df:gallery}

Let $\A$ be an essential toric arrangement in a torus $T$ of dimension $d$.
For every 1-dimensional layer 
$M\in \Cc_{d-1}$ fix, once and for all, a chamber $R_M \in \Tc(\A_0)$ adjacent to 
$X_M$, and choose a minimal gallery in $\Tc(\A_0)$ 
$$R_M = C_0,C_1,\ldots, C_{k(M)} =\op_{X_M}(-R_M).$$ 
\end{df}


\begin{figure}[ht]
	\centering
		%
	\def\svgwidth{0.6\columnwidth}
\begingroup%
  \makeatletter%
  \providecommand\color[2][]{%
    \errmessage{(Inkscape) Color is used for the text in Inkscape, but the package 'color.sty' is not loaded}%
    \renewcommand\color[2][]{}%
  }%
  \providecommand\transparent[1]{%
    \errmessage{(Inkscape) Transparency is used (non-zero) for the text in Inkscape, but the package 'transparent.sty' is not loaded}%
    \renewcommand\transparent[1]{}%
  }%
  \providecommand\rotatebox[2]{#2}%
  \newcommand*\fsize{\dimexpr\f@size pt\relax}%
  \newcommand*\lineheight[1]{\fontsize{\fsize}{#1\fsize}\selectfont}%
  \ifx\svgwidth\undefined%
    \setlength{\unitlength}{457.88182536bp}%
    \ifx\svgscale\undefined%
      \relax%
    \else%
      \setlength{\unitlength}{\unitlength * \real{\svgscale}}%
    \fi%
  \else%
    \setlength{\unitlength}{\svgwidth}%
  \fi%
  \global\let\svgwidth\undefined%
  \global\let\svgscale\undefined%
  \makeatother%
  \begin{picture}(1,0.57236409)%
    \lineheight{1}%
    \setlength\tabcolsep{0pt}%
    \put(0,0){\includegraphics[width=\unitlength,page=1]{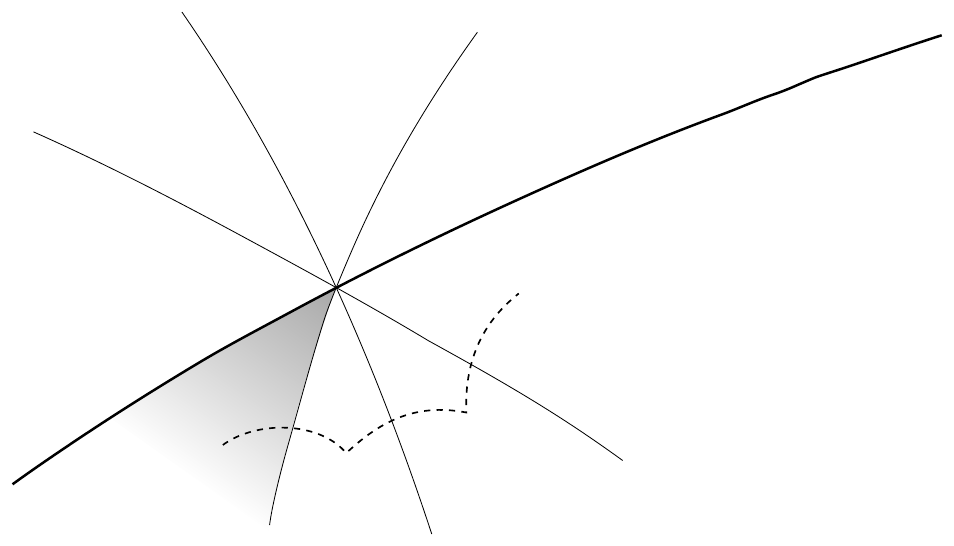}}%
    \put(0.0748013,0.05619978){\color[rgb]{0,0,0}\makebox(0,0)[lt]{\lineheight{1.25}\smash{\begin{tabular}[t]{l}$R_M=C_0$\end{tabular}}}}%
    \put(0.33500922,0.05306971){\color[rgb]{0,0,0}\makebox(0,0)[lt]{\lineheight{1.25}\smash{\begin{tabular}[t]{l}$C_1$\end{tabular}}}}%
    \put(0.47701203,0.11240224){\color[rgb]{0,0,0}\makebox(0,0)[lt]{\lineheight{1.25}\smash{\begin{tabular}[t]{l}$\ldots$\end{tabular}}}}%
    \put(0.55505245,0.26964131){\color[rgb]{0,0,0}\makebox(0,0)[lt]{\lineheight{1.25}\smash{\begin{tabular}[t]{l}$C_{k(M)}= \op_{X_M}(-R_M)$\end{tabular}}}}%
    \put(0.77766274,0.50215528){\color[rgb]{0,0,0}\makebox(0,0)[lt]{\lineheight{1.25}\smash{\begin{tabular}[t]{l}$X_M$\end{tabular}}}}%
    \put(0.89054898,0.15571031){\color[rgb]{0,0,0}\makebox(0,0)[lt]{\lineheight{1.25}\smash{\begin{tabular}[t]{l}$\mathcal{A}_0$\end{tabular}}}}%
    \put(0.5047714,0.44161607){\color[rgb]{0,0,0}\makebox(0,0)[lt]{\lineheight{1.25}\smash{\begin{tabular}[t]{l}$-R_M$\end{tabular}}}}%
  \end{picture}%
\endgroup%

  \caption{A minimal gallery $C_0, \ldots, C_{k(M)}$ in $\Tc(\A_0)$.}
	\label{fig:min_gal_1}
\end{figure}

Here, for every face $F\subseteq \mathbb R^d$ of $\mathcal A_0$ we write $-F$ for the negative of $F$ viewed as a set of vectors in $\mathbb R^d$. Moreover, if $C$ is a chamber adjacent to $X_M$, with $\op_{X_M}(C)$ we mean the chamber opposite to $C$ with respect to $X_M$, i.e., the unique chamber such that $\overline{C}\cap X_M=\overline{\op_{X_M}(C)}\cap X_M$ and $S(C,\op_{X_M}(C))=\mathcal A_{X_M}$. In particular, note that $S(C,-\op_{X_M}(C))=\mathcal A_0\setminus \A[M]$. For an illustration see Figure \ref{fig:min_gal_1}.


\datogliere{
\begin{Rem}\label{rem:alt1}
The choice of a different $R_M$, say $R_M'$, would give a different gallery, say $C_0',\ldots,C'_{k({M})}$. Let $\rho:=R_1,\ldots,R_h$ be a minimal gallery from $C_0$ to $C_0'$. Notice that, since $S(C_0,C'_0)\subseteq \A_{X_M}$, we have $S(C_0,C'_0)\cap S(C'_0,C'_{k(M)})=\emptyset$. Thus the concatenation of $\rho$ with $C_0',\ldots,C'_{k({M})}$ is a minimal gallery, as is the concatenation of $C_0,\ldots,C_{k({M})}$ with $\tilde{\rho}:=\op_{X_M}(-\rho)$.

\end{Rem}
}

\begin{df}

Given any chamber $C\in \Tc(\A_0)$ and any $F\in \Fc(\A)$ we denote by $\newnot{C}{F}$ the unique chamber of $\A[F]$ containing $C$.

\end{df}	

Given any face $F\subseteq M$, let 
$$
C_0^F,\ldots,C_{k(M,F)}^F
$$
be an enumeration of the set $\{\newnot{C_i}{F}\}_{i=0,\ldots,k(M)}$
in increasing index order and call $W_i^F$
the wall separating $\newnot{C_i}{F}$
from $\newnot{C_{i+1}}{F}$ in $\A[F]$. This is illustrated in Figure \ref{fig:min_gal_2}.

\begin{figure}[htb]
	\centering
    %
	\def\svgwidth{0.5\columnwidth}
	\import{./figures/}{figura2.pdf_tex}

	\caption{The gallery $C_0^F,\ldots,C_{k(M,F)}^F$
	in $\A[F]$.}
	\label{fig:min_gal_2}
\end{figure}


\begin{Rem}\label{rem:mini}
The sequence $C^F_0,W_1^F,C_1^F,\ldots$ defines a minimal gallery in $\A[F]$, hence it never crosses any hyperplane in $\A[M]$. 
In terms of sign vectors, $\gamma_{C_i^F}(H)=\gamma_{W_i^F}(H)=\gamma_{R_M}(H)$
for all $i$ and all $H\in \A[F]\cap\A[M]$ (see \cite[\S 3.2.1]{caldel17}).
\end{Rem}

\def\pth{\operatorname{Path}}
\begin{df}\label{defpath}
For every $B\in \Tc(\A_0)$, set 
\begin{center}
$v_i(M;B,F):=
[C_i^F,C_i^F]$ for $i=0,\ldots,k(M,F)$, \\ $e_i(M;B,F):=[W_{i+1}^F,\newnot{B}{F}_{W_{i+1}^F}]$ for $i=0,\ldots,k(M,F)-1$.
\end{center}
Define the following subposet of $\Sc(\A[F])$: 
$$
\pth(M;B,F):=\{
v_i(M;B,F)\}_{i=0,\ldots,k(M,F)}
\cup
\{e_i(M;B,F)\}_{i=0,\ldots,k(M,F)-1}.
$$
\end{df}

\begin{figure}[htb]
	\centering
    %
	\def\svgwidth{0.7\columnwidth}
	\import{./figures/}{figura3.pdf_tex}

	\caption{The poset $\pth(M;B,F)$.}
	\label{fig:min_gal}
\end{figure}


Note that the poset $\pth(M;B,F)$  has the following form:
\begin{center}{\small
\begin{tikzpicture}
\node (A) at (-4,0)  {$v_0(M;B,F)$};
\node (B) at (-2,1)  {$e_0(M;B,F)$};
\node (C) at (0,0)  {$v_1(M;B,F)$};
\node (D) at (2,1)  {$e_1(M;B,F)$};
\node (E) at (4,0)  {$v_2(M;B,F)$};
\node at (6,0)  {$\ldots$};
\node at (6,1)  {$\ldots$};
\draw[->] (A) -- (B);
\draw[->] (C) -- (D);
\draw[<-] (B) -- (C);
\draw[<-] (D) -- (E);
\end{tikzpicture}
}
\end{center}
so that its topological realization $\gre{\pth(M; B,F)}$ is a topological path from $v_0(M;B,F)$ to $v_{k(M,F)}(M;B,F)$ in the Salvetti complex of $\A[F]$.
\begin{Rem}\label{rem:dim1}
If $\dim(F)=1$, then $k(M,F)=0$ so $\pth(M;B,F)$ in this case is a single vertex which we will denote $v(M;B,F)$. 
\end{Rem}

\begin{Rem}\label{rem:eebar}
For any two $B\neq B'$ we have
$v_i(M;B',F) = v_i(M;B,F)$ for all $i$ and
$e_i(M;B',F) = e_i(M;B,F)$ if and only if the affine span of $W_i$ does not separate $B$ from $B'$ (hence $\newnot{B}{F}$ from $\newnot{B'}{F}$). If we set
$$\overline{e}_i(M;B,F):=[W_i^F,(-\newnot{B}{F})_{W_i^F}]$$
we can state more precisely
\begin{equation}\label{extremis_eq}
e_i(M;B',F) = \left\{\begin{array}{ll}
e_i(M;B,F) & \textrm{if } W_i^F \not\in S(B,B'),\\
\overline{e}_i(M;B,F) & \textrm{otherwise.}
\end{array}\right.
\end{equation}
\end{Rem}

For every $H\in \A_F \setminus \A_M$ 
there is a unique $i$ such that $W_i^F= X_H$,
thus we can define a subcategory $\Xi(H;B,F)$:
\begin{equation}\label{def:Xi}
\begin{tikzpicture}[y=2em,x=3em,baseline=(current  bounding  box.center)]
\node (A) at (-1,1) {$e_i(M;B,F)$};
\node (B) at (1,1) {$\overline{e}_i(M;B,F)$};
\node (C) at (1,-1) {$v_{i+1}(M;B,F)$};
\node (D) at (-1,-1) {$v_i(M;B,F)$};
\draw [->] (C) -- (A);
\draw [->] (C) -- (B);
\draw [->] (D) -- (A);
\draw [->] (D) -- (B);
\end{tikzpicture}
\end{equation}

\begin{df}\label{def:Lambda}
Let $\A$ be an essential toric arrangement in a torus $T$ of dimension $d$. 
For every $1$-dimensional layer $M\in \Cc_{d-1}$  and every chamber $B\in \T(\A_0)$, define the induced subcategory of $\Sc(\A)=\int\mathscr D$  on the object set
$$\vg_B^M: = \bigcup_{F \subseteq M}
\{(F, X) \mid X\in \pth(M;B,F)\}.
$$
\end{df}
In order to understand the structure of the subcategory $\vg_B^M$ let us first consider the category $\Fc(\A^M)$. 
Since it is the face category of a polyhedral cellularization of $S^1$ (cf.\ Remark \ref{remark_polyhedral}),
every object $P$ of $\Fc(\A^M)$ of dimension $0$ is the origin of two arrows and every object $G$ of dimension $1$ is the target of two arrows. Choose an object $P_0$ of dimension $0$ and consider the two arrows, say $m_1,m_2$ , originating in $P_0$. Then $F_{m_1} = - F_{m_2}$ in $\Fc(\A[P_0])$ and in particular exactly one of these -- say, $F_{m_2}$ -- is adjacent to $-\newnot{B}{P_0}$. Call $G_0$ the target of $m_2$, and call $P_1$ the origin of the other nontrivial morphism ending in $G_0$. Continuing this way we can naturally label the objects of $\Fc(\A^M)$ as
\begin{equation}\label{catenina}
P_0 \rightarrow G_0 \leftarrow P_1 \rightarrow G_1 \leftarrow \ldots \rightarrow G_{\ell(M)} \leftarrow P_{0}.
\end{equation}
 \begin{Lem}
 In the category $\Sc(\A)=\int \mathscr D$ we have, for all $i$ modulo $\ell(M)$, arrows
 $$ (G_j, v(M;B,G_j) )
 \rightarrow 
 (P_i, v_l(M;B,P_{i})) $$
 if and only if either $j=i-1$ and $l=0$, or else $j=i$ and $l=k(M,P_i)$. (The index-less $v(M;B,G_j)$ is $[C_0^{G_{j}},C_0^{G_{j}}]$, the only element of $\pth(M;B,G_j)$, as in Remark \ref{rem:dim1}.)
 \end{Lem}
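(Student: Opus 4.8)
The plan is to unwind the definition of the Grothendieck construction $\int\mathscr D$ together with the explicit description of the diagram $\mathscr D$ and the maps $i_m$, and then read off exactly which arrows appear. Recall that in $\int\mathscr D$ a morphism $(G_j, x)\to (P_i, y)$ consists of a morphism $\mu\colon P_i \to G_j$ in $\Fc(\A)$ together with an inequality $\mathscr D(\mu)(x)\le y$ in $\Sc(\A[P_i])$, where $\mathscr D(\mu)$ sends $[G,K]\mapsto [i_\mu(G), i_\mu(K)]$. Since the objects $G_j$ have dimension $1$ and the $P_i$ dimension $0$, and $\Fc(\A^M)$ is the face category of a cellularization of $S^1$ with the labelling \eqref{catenina}, the only morphisms $P_i\to G_j$ in $\Fc(\A^M)$ (hence the only candidates) are: the morphism $m_2$-type arrow ending $G_i$ whose origin is $P_i$ (this is the ``$j=i$'' case), and the other arrow ending in $G_i$ whose origin is $P_{i+1}$, equivalently the arrow $P_i\to G_{i-1}$ (the ``$j=i-1$'' case). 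So the content of the lemma is: for each of these two morphisms, the image under $\mathscr D(\mu)$ of the unique vertex $v(M;B,G_j)\in\pth(M;B,G_j)$ lands exactly on one prescribed endpoint of $\pth(M;B,P_i)$, and on no other element of that path.

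First I would fix such a morphism $\mu\colon P_i\to G_j$ and compute $\mathscr D(\mu)(v(M;B,G_j))$. By Remark~\ref{rem:dim1}, $v(M;B,G_j)=[C_0^{G_j},C_0^{G_j}]$, i.e. the pair $(G_j$-chamber, same chamber) where $C_0^{G_j}=\newnot{C_0}{G_j}=\newnot{R_M}{G_j}$. Applying $\mathscr D(\mu)$ gives $[i_\mu(C_0^{G_j}), i_\mu(C_0^{G_j})]$, and by Definition~\ref{df:i_m} the sign vector of $i_\mu(C_0^{G_j})$ agrees with $\gamma_{C_0^{G_j}}$ on hyperplanes of $\A[G_j]$ and with $\gamma_{F_\mu}$ elsewhere. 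The key geometric input, which I would extract from the labelling \eqref{catenina}: when $\mu$ is the arrow with $j=i$, then $F_\mu$ is adjacent to $\newnot{B}{P_i}$ (this is the choice ``$F_{m_2}$ adjacent to $-\newnot{B}{P_0}$'' traced around the cycle, with the appropriate sign bookkeeping), and the resulting chamber of $\A[P_i]$ is precisely $C_{k(M,P_i)}^{P_i}=\newnot{C_{k(M)}}{P_i}=\newnot{\op_{X_M}(-R_M)}{P_i}$, matching $v_{k(M,P_i)}(M;B,P_i)$; when $\mu$ is the arrow with $j=i-1$, the opposite side is selected and we instead obtain $C_0^{P_i}=\newnot{R_M}{P_i}$, matching $v_0(M;B,P_i)$. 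I would verify these two sign-vector computations by comparing, hyperplane by hyperplane, with the sign vectors of $C_0^F$ and $C_{k(M,F)}^F$ described in Remark~\ref{rem:mini}, using that hyperplanes in $\A[P_i]\cap\A[M]$ keep the sign of $R_M$ throughout the minimal gallery.

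Having identified the image vertex, the remaining point is that there \emph{is} a morphism in $\int\mathscr D$ from $(G_j,v(M;B,G_j))$ to $(P_i,y)$ for $y$ equal to this image, and to no other $y\in\pth(M;B,P_i)$: this is immediate once we know $\mathscr D(\mu)(v(M;B,G_j))$ equals exactly one element $v_l(M;B,P_i)$ of the path, since in the Grothendieck construction over the single morphism $\mu$ the available inequality is forced — the source is a single point, so $\mathscr D(\mu)(\text{pt})\le y$ holds iff $y$ dominates that image point, and within the ``zig-zag'' poset $\pth(M;B,P_i)$ (see the displayed Hasse diagram) a vertex $v_l$ dominates nothing but itself. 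Conversely, any arrow $(G_j,v(M;B,G_j))\to(P_i,y)$ must cover some $\mu\colon P_i\to G_j$ in $\Fc(\A)$; since $P_i\subseteq M$ and $G_j\subseteq M$, such $\mu$ lies in $\Fc(\A^M)$, so it is one of the two arrows above, and we are back in the case analysis. I expect the main obstacle to be the bookkeeping in the second paragraph: pinning down precisely, via the inductive labelling \eqref{catenina} and the alternating choice of the side adjacent to $\pm\newnot{B}{P}$, that the ``$j=i$'' arrow produces the $k(M,P_i)$-endpoint and the ``$j=i-1$'' arrow the $0$-endpoint — i.e. getting the orientation of the cycle and the interaction of $F_\mu$ with $\op_{X_M}$ and the negation $-$ exactly right, rather than off by a shift. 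Everything else is a routine application of Definitions~\ref{df:i_m} and the description of $\int\mathscr D$.
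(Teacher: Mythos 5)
Your approach is essentially the paper's: restrict attention to the two candidate morphisms $P_i\to G_{i-1}$ and $P_i\to G_i$ in $\Fc(\A^M)$, compute the image of the unique vertex $v(M;B,G_j)$ under the corresponding $\mathscr D(\mu)$ via the sign-vector description of $i_\mu$, and use the fact that $\pth(M;B,G_j)$ is a singleton (and that vertices in the zig-zag posets are minimal) to rule out any other target. Your final assignment of endpoints ($j=i$ gives $l=k(M,P_i)$, $j=i-1$ gives $l=0$) also agrees with the paper.

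However, the intermediate sign claim is off, precisely in the spot you flagged as the main obstacle. The labelling around \eqref{catenina} fixes once and for all that the arrow $m_2\colon P_i\to G_i$ (the ``$j=i$'' arrow) has $F_{m_2}$ adjacent to $-\newnot{B}{P_i}$, not $\newnot{B}{P_i}$ as you write; the paper's proof states ``$i_{m_2}(G_i)$ is adjacent to $-\newnot{B}{P_i}$'' explicitly, and this relation does not flip sign as one traces around the cycle. If one took your claim at face value (test it with $B=R_M$), then $i_\mu(C_0^{G_i})$ would agree with $R_M$ on $\A[M]$ and with $B=R_M$ on $\A[P_i]\setminus\A[M]$, which is $C_0^{P_i}$ rather than $C_{k(M,P_i)}^{P_i}$ --- contradicting your own stated conclusion. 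Swap the two signs (negative side for $j=i$, positive side for $j=i-1$) and the rest of your sign-vector computation goes through as in the paper.
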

 \begin{proof}
 Let us consider a piece $G_{i-1} \stackrel{m_1}{\leftarrow} P_i \stackrel{m_2}{\to} G_{i}$ of the chain of Diagram \eqref{catenina}. We refer to Figure \ref{fig:category} for illustration. 
 \begin{figure}[ht]
 	\centering
	\def\svgwidth{1\columnwidth}
	\import{./figures/}{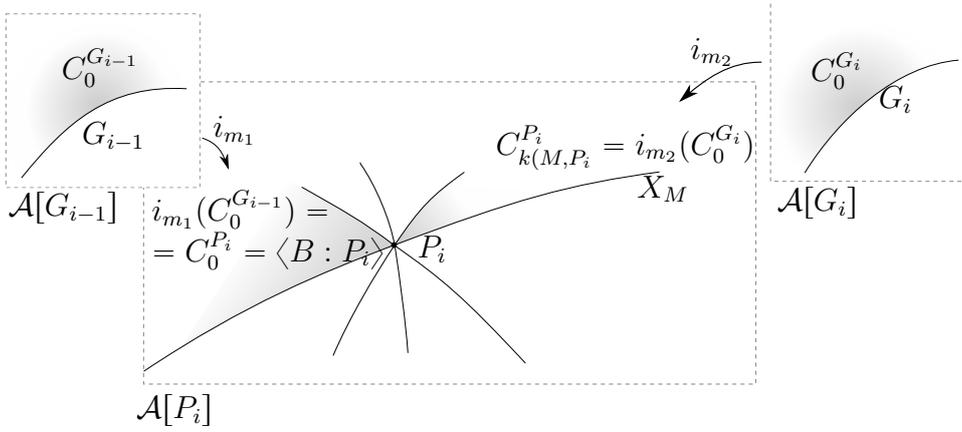}

 	\caption{A picture of $\int \mathscr D$ near $G_{i-1}, P_i, G_i$ with arrows $i_{m_1}, i_{m_2}$.}
 	\label{fig:category}
 \end{figure}
 By construction, $i_{m_1}(G_{i-1})$ is adjacent to $\newnot{B}{P_{i}}$ and $i_{m_2}(G_{i})$ is adjacent to $-\newnot{B}{P_{i}}$ . Thus, the associated functions
 $$
 \Fc(G_{i-1}) \stackrel{i_{m_1}}{\rightarrow} \Fc(P_i) \stackrel{i_{m_2}}{\leftarrow} \Fc(G_{i})
 $$
 have  $i_{m_1}(C_0^{G_{i-1}}) = C_0^{P_{i}}$ and $i_{m_2}(C_0^{G_{i}}) = C_{k(M,P_{i})}^{P_{i+1}}$. Thus $\mathscr D(m_2)(v(M;B,G_{i}))=v_{k(M,P_{i})}(M;B,P_{i})$ and $\mathscr D(m_1)(v(M;B,G_{i-1}))=v_{0}(M;B,P_{i})$. This means that $(m_1,\mathscr D(m_1))$ and $(m_2,\mathscr D(m_2))$ are the claimed morphisms in $\int\mathscr D$. The fact that $\pth(M;B,G_{i-1})$ and $\pth(M;B,G_i)$ have only one element implies that there are no morphisms to other vertices $v_l(M;B,P_i)$'s.
 \end{proof}
 \begin{Cor}
The category $\vg_B^M$ is of the form
\begin{center}{\tiny
\begin{tikzpicture}[x=3.2em]
\node (A) at (-3.5,0)  {$(P_0,v_0(M;B,P_0))$};
\node (B) at (-1.5,1)  {$(P_0,e_0(M;B,P_0))$};
\node (B1) at (-1.5,0) {};
\node (D) at (2,1)  {$(P_0,e_{k(M,P_0)}(M;B,P_0))$};
\node (E) at (4,0)  {$(P_0,v_{k(M,P_0)}(M;B,P_0))$};
\node (F) at (6,-1)  {$(G_0,v(M;B,G_0))$};
\node (G) at (8,0)  {$(P_1,v_{0}(M;B,P_1))$};
\node (H) at (10,1)  {$(P_1,e_{0}(M;B,P_1))$};
\node at (11,0.5)  {$\ldots$};
\node at (0,.5)  {$\ldots$};
\draw [->] (A) -- (B);
\draw [<-] (D) -- (E);
\draw [<-] (E) -- (F);
\draw [->] (F) -- (G);
\draw [->] (G) -- (H);
\end{tikzpicture}
}
\end{center}
In particular, it is a poset homeomorphic to $S^1$.
\end{Cor}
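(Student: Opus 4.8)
The plan is to read off the whole acyclic category $\vg_B^M$ by gluing, along the cycle \eqref{catenina}, the local pieces produced in Definition~\ref{defpath} and already analysed in the Lemma. I would do this in three steps: first list the objects grouped by their first coordinate $F\in\Fc(\A^M)$; then list the morphisms, separating those lying over $\id_F$ from those lying over a non-identity arrow of $\Fc(\A^M)$; and finally read off the displayed diagram and deduce the topology.

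For the objects: the faces $F\subseteq M$ are exactly the objects of $\Fc(\A^M)$, i.e.\ the $P_0,\dots,P_{\ell(M)}$ and $G_0,\dots,G_{\ell(M)}$ of \eqref{catenina}. Over a $0$-dimensional $P_i$ the objects of $\vg_B^M$ form the subposet $\pth(M;B,P_i)\subseteq\Sc(\A[P_i])$, which by the description following Definition~\ref{defpath} is the zigzag $v_0\to e_0\leftarrow v_1\to\cdots\leftarrow v_{k(M,P_i)}$ with $2k(M,P_i)+1$ elements; over a $1$-dimensional $G_j$, Remark~\ref{rem:dim1} gives the single object $(G_j,v(M;B,G_j))$. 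For the morphisms: since $\Fc(\A)$ is acyclic, $F$ has no endomorphism besides $\id_F$, so a morphism of $\int\mathscr D$ between two objects with equal first coordinate $F$ is merely an order relation of $\mathscr D(F)=\Sc(\A[F])$, which on the object set of $\vg_B^M$ reproduces exactly the zigzag inside each $P_i$-block and nothing inside a $G_j$-block. A morphism between objects with distinct first coordinates must lie over a non-identity arrow of $\Fc(\A^M)$, and since \eqref{catenina} has no non-identity composites it lies over one of $P_i\to G_{i-1}$, $P_i\to G_i$; the Lemma then says that the morphisms of $\int\mathscr D$ over these which belong to the induced subcategory are exactly $(G_{i-1},v(M;B,G_{i-1}))\to(P_i,v_0(M;B,P_i))$ and $(G_i,v(M;B,G_i))\to(P_i,v_{k(M,P_i)}(M;B,P_i))$. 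Concatenating the $P_i$-blocks through these arrows gives precisely the diagram in the statement.

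It then remains to see that these data have no parallel morphisms — so that $\vg_B^M$ is a poset — and to identify its realization. The first follows from the description: each object meets exactly two of the displayed arrows, and using that the $v$'s are minimal and the $e$'s maximal in each $\pth(M;B,P_i)$ one checks that between any two objects there is at most one morphism of $\int\mathscr D$. For the realization, $\gre{\pth(M;B,P_i)}$ is an arc with endpoints $v_0(M;B,P_i)$ and $v_{k(M,P_i)}(M;B,P_i)$, and by the Lemma the vertex $\gre{(G_j,v(M;B,G_j))}$ joins the $v_{k(M,P_j)}$-end of the $j$-th arc to the $v_0$-end of the $(j{+}1)$-st; going once around \eqref{catenina} strings the arcs end-to-end into a single loop, whence $\gre{\vg_B^M}$ is a circle. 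The degenerate case $\ell(M)=0$ is identical, with one arc whose two ends are joined through $\gre{(G_0,v(M;B,G_0))}$. I expect the bulk of the work to be the morphism bookkeeping of the middle step — matching, via the Lemma, the endpoints of consecutive arcs; the point to handle with care is that $\vg_B^M$, being a full subcategory, also contains the composites of the displayed arrows, so one should check that no order relations arise beyond these, after which $\gre{\vg_B^M}$ (deformation retracting, if necessary, onto the displayed cyclic $1$-complex) is $S^1$.
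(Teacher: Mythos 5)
Your proposal is correct and follows the same route the paper implicitly takes: the paper presents this as an immediate Corollary of the preceding Lemma, and your derivation --- reading off the objects fibre by fibre over the cycle \eqref{catenina}, using the zigzag structure of $\pth(M;B,P_i)$ within each $P_i$-block and the Lemma for the cross-block arrows, then concatenating --- is exactly the intended argument, including the degenerate case $\ell(M)=0$.

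One remark on your final caution, which is well-placed and in fact touches a subtlety the Corollary's statement glosses over. Since $\vg_B^M$ is the \emph{full} subcategory of $\int\mathscr D$ on the given objects, the composites $(G_j,v)\to(P_i,v_l)\to(P_i,e_l)$ are genuinely present (one has $\mathscr D(m)(v(G_j))=v_l\leq e_l$ in $\Sc(\A[P_i])$, so Thomason's construction supplies the composite morphism $(G_j,v)\to(P_i,e_l)$). Consequently the poset has three-element chains and its order complex acquires $2$-simplices; strictly speaking the realization is homotopy equivalent to $S^1$ by collapsing those triangles onto the two displayed edges, rather than being literally homeomorphic to the circle. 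Your parenthetical ``deformation retracting, if necessary'' correctly anticipates this. This does not affect anything downstream in the paper, where only the homology class $\wl^M_B$ of the cycle matters, but it is the one place where your write-up is actually more careful than the stated Corollary; if you want the clean homeomorphism statement, you should either restrict to the subcategory generated by the displayed (non-composite) arrows, or phrase the conclusion as a homotopy equivalence.
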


\begin{Rem}\label{rem:FundChoices} 
A different choice of the gallery in Definition \ref{df:gallery} does not affect the homotopy type of the subcomplex $\gre{\vg_{R_M}^M}$ inside the toric Salvetti complex $\Sal(\A)$.
The same is true if one changes $R_M$ to any other chamber $R'_M$ with $R_M\cap X_M = R'_M\cap X_M$. Moreover, choosing the same gallery but traversed in the opposite sense will not change the subcomplexes $\Lambda_{\ast}^M$.
\end{Rem}

\datogliere{
\begin{Lem}
The homology class of the path $\vg_B^M$ does not depend on the choice of the chamber  ${}^MC$ in Definition \ref{df:gallery}.
\end{Lem}
\begin{proof}
Fix a face $F\subseteq M$. The elements of $\pth(B; M,F)$ are, by definition, cells of the Salvetti complex of $\A[F]$. Taken together, they correspond to a minimal, positive path from $v_0(M,B,F)$ to $v_{k(M,F)}(M,B,F)$. Now consider the same construction with a different choice for the chamber ${}^MC$, say ${}^MC'$, as in Remark \ref{rem:alt1}, and let $\pth(B; M,F)'$ be the minimal path thus obtained. In the same way, the minimal galleries $\rho=R_1,\ldots,R_h$ and $\tilde{\rho}$ of Remark \ref{rem:alt1} define positive minimal paths $\rho^F$ and $\tilde{\rho}^F$ in $\Sal(\A[F])$ such that the concatenation of  $\rho$ with $\pth(B; M,F)'$ is a positive minimal path. In particular, since positive minimal paths with same endpoints are homotopic, we have a homotopy between
\begin{center}
$(\tilde{\rho}^F)\pth(B; M,F)(\rho^F)^{-1}$ $\quad$and 
$\quad$$\pth(B; M,F)'$ 
\end{center}
\noindent in the Salvetti complex of $\A[F]$. Call $h_{F}$ this homotopy.

Now consider the entirety of $\vg_B^M$ and $(\vg_B^M)'$ constructed choosing ${}^MC$ and ${}^MC'$, respectively. Notice that, if $G$ is a face on $M$ of dimension one, then $(G,\rho^G)=(G,\tilde{\rho}^G)$. Moreover, for every face $F$ of $M$ the homotopy $h_{F}$ is carried by cells of $(F,\Sal(\A[F]))$, so the union of such cells defines a homotopy between $(\vg_B^M)'$, i.e., the concatenation of all $\pth(B; M,F)'$ with $F$ ranging in $M$, and $\vg_B^M$, i.e., the concatenation of all $(\tilde{\rho}^F)\pth(B; M,F)(\rho^F)^{-1}$, with $F$ ranging in $M$.

\medskip

\begin{adjustbox}{center}
{\tiny
\begin{tikzpicture}[x=3.0em]
\node (A) at (-3.5,0)  {$(P_0,v_0(M;B,P_0))$};
\node (B) at (-1.5,1)  {$(P_0,e_0(M;B,P_0))$};
\node (B1) at (-1.5,0) {};
\node (D) at (2,1)  {$(P_0,e_{k(M,P_0)}(M;B,P_0))$};
\node (E) at (4,0)  {$(P_0,v_{k(M,P_0)}(M;B,P_0))$};
\node (F) at (6,1)  {$(G_0,v(M;B,G_0))$};
\node (G) at (8,0)  {$(P_1,v_{0}(M;B,P_1))$};
\node (H) at (10,1)  {$(P_1,e_{0}(M;B,P_1))$};
\node at (11,0.5)  {$\ldots$};
\node at (0,.5)  {$\ldots$};
\draw [->] (A) -- (B);
\draw [<-] (D) -- (E);
\draw [->] (E) -- (F);
\draw [<-] (F) -- (G);
\draw [->] (G) -- (H);
\node (Az) at (-3.5,-3)  {$(P_0,v_0(M;B,P_0)')$};
\node (Bz) at (-1.5,-2)  {$(P_0,e_0(M;B,P_0)')$};
\node (B1z) at (-1.5,-3) {};
\node (Dz) at (2,-2)  {$(P_0,e_{k(M,P_0)}(M;B,P_0)')$};
\node (Ez) at (4,-3)  {$(P_0,v_{k(M,P_0)}(M;B,P_0)')$};
\node (Fz) at (6,-2)  {$(G_0,v(M;B,G_0))'$};
\node (Gz) at (8,-3)  {$(P_1,v_{0}(M;B,P_1)')$};
\node (Hz) at (10,-2)  {$(P_1,e_{0}(M;B,P_1)')$};
\node at (11,-3)  {$\ldots$};
\node at (0,-3)  {$\ldots$};
\draw [->] (Az) -- (Bz);
\draw [<-] (Dz) -- (Ez);
\draw [->] (Ez) -- (Fz);
\draw [<-] (Fz) -- (Gz);
\draw [->] (Gz) -- (Hz);
\draw[->] (A) -- (Az);
\draw[->] (E) -- (Ez);
\draw[->] (G) -- (Gz);
\draw[->] (F) -- (Fz);
\node (P0) at (0,-1) {{\normalsize $h_{P_0}$}};
\node (P1) at (11,-1) {{\normalsize $h_{P_1}$}};
\node (rf1) at (-3.8,-1) {{\small $\rho^{P_0}$}};
\node (rf2) at (3.7,-1) {{\small $\tilde{\rho}^{P_0}$}};
\node (rf3) at (5.25,-1) {{\small $\rho^G \!\! = \tilde{\rho}^G$}};
\node (rf4) at (7.7,-1) {{\small $\rho^{P_1}$}};
\end{tikzpicture}
}
\end{adjustbox}
\end{proof}

}


\begin{Rem} \label{rem:contenimenti}
If $M$ is a $1$-dimensional layer contained in $L$, and $B$ is any chamber of $\A_0$, we have $\vg_B^M\subseteq \mathcal S_{L,F_0}$ if $F_0=\overline{B}\cap X_L$. 
\end{Rem}

 \begin{df} \label{df:lambdini}
 For $B\in \Tc(\A_0)$ and $M$ any layer of dimension 1, let 
 $
 [\vg_B^M] \in C_1(\Sal(\A))
 $ denote the cycle supported on $\gre{\vg_B^M}$ and uniquely determined by setting the coefficient of $(v_0(M;B,P_0) \to e_0(M;B,P_0))$ equal to $1$.

Then, let $
\wl_B^M 
$ 
be the homology class of
$ [\Lambda_B^M]
$.

 \end{df}

For the following ``basis-change'' formula we need to define, for any $F\in \Fc(\A)$ and any two chambers $B,B'\in \Tc(\A_0)$, the set 
\begin{equation}
S_F(B,B'):=\{H\in \A_F\mid X_H \textrm{ separates }B\textrm{ from }B'\}. 
\end{equation}

\begin{Prop}\label{prop:cambio_camera}
Let $B,B'\in \Tc(\A_0)$. Then
$$
[\vg_B^M] - [\vg_{B'}^M] = \sum_{P \subseteq M} \sum_{
H \in  S_P(B,B')\setminus \A_M
} 
[\Xi (H;B,P)]
$$
where $[\Xi(H;B,P)]$ is the $1-$cycle determined by the geometric realization of the subcategory defined in Diagram \eqref{def:Xi} with the orientation given by setting the coefficient of $v_i(M;B,P)\to e_i(M;B,P)$ equal to $1$.
\end{Prop}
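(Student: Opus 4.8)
The plan is to prove the basis-change formula by localizing on the category $\Fc(\A^M)$: for each face $F \subseteq M$ I compare the path $\pth(M;B,F)$ with $\pth(M;B',F)$ inside $\Sal(\A[F])$, and then assemble these local comparisons into a global identity of chains in $C_1(\Sal(\A))$. By Remark \ref{rem:eebar} the two paths share all the vertices $v_i(M;B,F) = v_i(M;B',F)$, and their edges differ exactly at those $W_i^F$ lying in $S(B,B')$, where $e_i(M;B',F) = \overline{e}_i(M;B,F)$ instead of $e_i(M;B,F)$. Since $W_i^F = X_H$ for the unique $H \in \A_F\setminus\A_M$ with that wall, each such discrepancy is governed by the square $\Xi(H;B,F)$ of Diagram \eqref{def:Xi}, whose boundary (with the stated orientation) is precisely $(v_i \to e_i(M;B,F)) + (e_i(M;B,F) \to \ldots)$-type terms minus the corresponding $\overline{e}_i$ terms — that is, the local difference $[\pth(M;B,F)] - [\pth(M;B',F)]$ equals $\sum_{H} [\Xi(H;B,F)]$ where $H$ ranges over $S_F(B,B')\setminus\A_M$ with $X_H = W_i^F$ for some $i$.

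First I would fix a face $F \subseteq M$ and write out $[\pth(M;B,F)]$ and $[\pth(M;B',F)]$ as explicit $1$-chains in $C_1(\Sal(\A[F]))$, using Definition \ref{defpath} and the orientation convention from Definition \ref{df:lambdini} (coefficient $1$ on the first oriented edge). Using \eqref{extremis_eq} I split the index set $\{0,\ldots,k(M,F)-1\}$ into those $i$ with $W_i^F \in S(B,B')$ and the rest; on the complement the two chains agree termwise, and on each $i$ in the former set the difference of the two ``bowtie'' pieces $v_i \to e_i \leftarrow v_{i+1}$ and $v_i \to \overline{e}_i \leftarrow v_{i+1}$ is exactly $\partial$-free: it is the $1$-cycle $[\Xi(H;B,F)]$ supported on the square \eqref{def:Xi}, since that square has the two vertices $v_i, v_{i+1}$ as sources and $e_i, \overline{e}_i$ as targets. (One must check the orientation bookkeeping matches the convention ``coefficient of $v_i(M;B,P)\to e_i(M;B,P)$ equal to $1$'' — this is the routine sign check.) Care is needed because consecutive chambers $\newnot{C_i}{F}, \newnot{C_{i+1}}{F}$ may coincide (the enumeration $C_0^F, \ldots, C_{k(M,F)}^F$ collapses repetitions, cf.\ Remark \ref{rem:dim1}), so the re-indexing $i \mapsto$ (index in $\A[F]$) must be tracked, but the sum over walls $X_H$ is unaffected.

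Next I would globalize. The subcategory $\vg_B^M$ is, by the Corollary following the Lemma, a polygon-shaped poset homeomorphic to $S^1$ obtained by concatenating, along the chain \eqref{catenina} $P_0 \to G_0 \leftarrow P_1 \to \cdots \leftarrow P_0$, the local paths $\pth(M;B,P_i)$ for the vertices $P_i$ together with the single-vertex pieces $v(M;B,G_j)$ at the edges $G_j$; the same description holds for $\vg_{B'}^M$ with the \emph{same} underlying chain \eqref{catenina} (the combinatorial choices in \eqref{catenina} depend only on $R_M$ and the gallery, not on $B$, and the adjacency-to-$-\newnot{B}{P_0}$ condition used there is insensitive to replacing $B$ by $B'$ since $S(B,B')\cap \A_{X_M}$ plays no role — here I should double-check that the single-vertex pieces at the $G_j$ genuinely agree for $B$ and $B'$, which follows from Remark \ref{rem:eebar} applied in dimension $1$). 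Because $\Sc(\A) = \int\mathscr D$ glues the local posets $\Sc(\A[F])$ along the maps $\mathscr D(m)$, the $1$-chain $[\vg_B^M] - [\vg_{B'}^M]$ decomposes as the sum over $P \subseteq M$ of the images in $C_1(\Sal(\A))$ of the local differences $[\pth(M;B,P)] - [\pth(M;B',P)]$ computed above, plus the (vanishing) contributions at the $G_j$. Substituting the local formula $\sum_{H \in S_P(B,B')\setminus\A_M}[\Xi(H;B,P)]$ yields the claimed double sum, where one checks that each $\Xi(H;B,P)$ embeds into $\int\mathscr D$ as a genuine subcategory (it already lives inside $\Sc(\A[P])$, hence inside the fiber over $P$).

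I expect the main obstacle to be the orientation and indexing bookkeeping in the globalization step: one must verify that, after transporting the local squares $\Xi(H;B,P)$ along the Grothendieck construction and summing, the boundary contributions at the gluing vertices $(P_i, v_0(M;B,P_i))$, $(P_i, v_{k(M,P_i)}(M;B,P_i))$ and $(G_j, v(M;B,G_j))$ cancel exactly between consecutive pieces, so that $[\vg_B^M]-[\vg_{B'}^M]$ is not merely homologous but \emph{equal} as a chain to $\sum_{P}\sum_{H}[\Xi(H;B,P)]$. This reduces to the fact that the vertices of $\vg_B^M$ and $\vg_{B'}^M$ literally coincide (Remark \ref{rem:eebar}), so the two polygonal cycles have identical $0$-skeleton and the difference of their fundamental cycles is a sum of the ``local'' $1$-cycles with no leftover boundary; making this precise with the orientation normalization of Definition \ref{df:lambdini} is the only genuinely delicate point, everything else being a direct unwinding of Definitions \ref{defpath}, \ref{def:Lambda} and \eqref{def:Xi}.
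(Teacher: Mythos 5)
Your proposal is correct and follows the same strategy as the paper: write $[\vg_B^M]-[\vg_{B'}^M]$ as a fiberwise sum over faces $P\subseteq M$, observe (Remark \ref{rem:eebar} and Equation \eqref{extremis_eq}) that the vertices and the $G_j$-connecting edges cancel while the only surviving terms are at indices $i$ with $W_i^P\in S_P(B,B')$, and recognize each surviving ``bowtie'' difference as a cycle $[\Xi(H;B,P)]$. The paper does exactly this as one explicit chain-level computation, so the two arguments differ only in presentation.
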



 \begin{proof}
 We write the elementary chain corresponding to a morphism $m:F\to G$ as $[F\to G]$ and let its boundary be $[G]-[F]$. Moreover, write $v_i(P)$, $e_i(P)$ for $v_i(M;B,P)$, $e_i(M;B,P)$ and  $v_i'(P)$, $e_i'(P)$ for $v_i(M;B',P)$, $e_i(M;B',P)$. In this way, recalling Equation \eqref{def:Xi} we can write
  \begin{align*}
 [\Xi (H;B,P)] = [v_i(P) \to e_i(P)] &- [v_{i+1}(P) \to e_{i}(P)]\\ &+[v_{i+1}(P) \to \overline{e}_{i}(P)]-[v_i(P) \to \overline{e}_i(P)]
 \end{align*}
 where $W_i^P= X_H$, and the difference of chains $[\vg_B^M] - [\vg_{B'}^M]$ is
\begin{align*}
\sum_{P \subseteq M} \sum_{i=0}^{k(M,P)-1} 
[v_i(P) \to e_i(P)] &- [v_{i+1}(P) \to e_{i}(P)]\\
&-([v_i'(P) \to e_i'(P)] - [v_{i+1}'(P) \to e_{i}'(P)] )\\
= \sum_{P \subseteq M} \sum_{i: W_i^P \in S_P(B,B')} 
[v_i(P) \to e_i(P)] &- [v_{i+1}(P) \to e_{i}(P)] \\
&+[v_{i+1}(P) \to \overline{e}_{i}(P)]-[v_i(P) \to \overline{e}_i(P)] \\
=\sum_{P \subseteq M} \sum_{H \in S_P(B,B')} 
& [\Xi (H;B,P)]
\end{align*}
 where, for the second equality, we used Remark \ref{rem:eebar} and the fact that $e_i'(P)={e}_i(P)$ whenever $W_i^P$ does not separate $B$ from $B'$, otherwise $e_i'(P)=\overline{e}_i(P)$ (see Equation \eqref{extremis_eq}).
 
 \end{proof}

%

\subsection{The generators $\wg_H$}

\begin{df}
For every $H\in \A$ choose, once and for all, a chamber ${R_H\in \Tc(\A_0)}$. 
For every $m:F\to G\in \operatorname{Mor}\Fc(\A^H)$ with $\operatorname{supp}(G)=H$,
let $C_1:= \newnot{R_H}{F}_{F_m}$, 
$C_2:=\newnot{-R_H}{F}_{F_m}$ 
be the two chambers of $\Tc(\A[F])$ adjacent to $F_m$. We define a subcategory $\Omega^{(m)}$ of $\Sc(\A)$ as 

\begin{center}{\small
\begin{tikzpicture}[x=4em]
\node (L) at (-4,.5) {$\Omega^{(m)}:=$};
\node (A) at (-2,0)  {$(F,[C_1,C_1])$};
\node (B) at (-2,1)  {$(F,[F_m,C_1])$};
\node (C) at (0,0)  {$(F,[C_2,C_2])$};
\node (D) at (0,1)  {$(F,[F_m,C_2])$};
\node (R) at (2,.5) {$\subseteq\Sc(\A).$};
\draw (A) -- (B) -- (C) -- (D) -- (A);
\end{tikzpicture}
}
\end{center}

\end{df}

\begin{Rem}\label{rem:OmegaId}
When $m = id_G\, : G\to G$, $\Omega^{(id_G)}$ is the  poset 
\begin{center}{\small
\begin{tikzpicture}[x=4em]
\node (L) at (-4,.5) {$\Omega^{(id_G)}=$};
\node (A) at (-2,0)  {$(G,[C_1,C_1])$};
\node (B) at (-2,1)  {$(G,[G,C_1])$};
\node (C) at (0,0)  {$(G,[C_2,C_2])$};
\node (D) at (0,1)  {$(G,[G,C_2])$};
\node (R) at (2,.5) {$\subseteq\Sc(\A).$};
\draw (A) -- (B) -- (C) -- (D) -- (A);
\end{tikzpicture}
}
\end{center}

\end{Rem}

\begin{df}\label{df:Om}
Let $[\Omega^{(m)}]$ denote the $1-$cycle of $\Sal(\A)$ supported on $\gre{\Omega^{(m)}}$ uniquely determined by the orientation given by setting the coefficient of $((G,[C_1,C_1]) \to (G,[G,C_1]))$ equal to $1$.
\end{df}


%
%

\begin{df}
Let $H\in \A$ and $B \in \Tc(\A_0)$. 
Define

$$ \epsilon(H,B):=
\left\{\begin{array}{ll}
1 & \textrm{ if }  X_H\not\in S(R_H, B)\\
-1 & \textrm{ if } X_H\in S(R_H,B)
\end{array}\right.$$
\end{df}

\begin{Lem}\label{lem:relations_homology} We have the following relations in the homology of $\Sal(\A)$.
$\,$
\begin{itemize}
\item[(i)] For every $m:F\to G$, $[\Omega^{(m)}]\simeq [\Omega^{(id_G)}]$.

\item[(ii)] $[\Omega^{(F\to G)}]\simeq [\Omega^{(F\to G')}]$ when both $G,G'$ are of maximal dimension in $\Fc(\A^H)$.

\item[(iii)] $[\Omega^{(P\to W_i^P)}] = \epsilon (H,B)[\Xi(H;B,P)]$ if $\vert W_i^P \vert = X_H$. 
\end{itemize}

\end{Lem}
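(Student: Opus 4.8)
The plan is to establish the three relations separately, working in each case by exhibiting an explicit homotopy (or a subcomplex realizing such a homotopy) inside the relevant slice of the toric Salvetti complex. For (i), I would use the functoriality of the diagram $\mathscr D$: the morphism $m:F\to G$ induces the map $\mathscr D(m):\Sc(\A[G])\to\Sc(\A[F])$ sending $[G,K]\mapsto[i_m(G),i_m(K)]$, and by Definition \ref{df:i_m} this map carries the square $\Omega^{(\id_G)}$ inside $\Sc(\A[G])$ to the square $\Omega^{(m)}$ inside $\Sc(\A[F])$, because $i_m$ sends the minimal face $G\in\Fc(\A[G])$ to $F_m$ and the two chambers $C_1=\newnot{R_H}{G}, C_2=\newnot{-R_H}{G}$ adjacent to $G$ to the chambers $\newnot{R_H}{F}_{F_m}, \newnot{-R_H}{F}_{F_m}$ adjacent to $F_m$. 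Hence in $\int\mathscr D$ the elementary chain of $m$, together with its faces, sweeps out a prism (a cylinder $S^1\times[0,1]$) whose two ends are $\gre{\Omega^{(\id_G)}}$ and $\gre{\Omega^{(m)}}$, giving the homology (indeed homotopy) $[\Omega^{(m)}]\simeq[\Omega^{(\id_G)}]$; care is needed only to check the orientation normalization of Definition \ref{df:Om} is respected, which follows since $i_m$ preserves the distinguished chamber $C_1$.

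For (ii), recall from Remark \ref{remark_polyhedral} that $\Fc(\A^H)$ is the face category of a polyhedral cellularization of a circle (since $H$ has codimension one, $\A^H$ is an essential arrangement in the $1$-dimensional quotient torus $H$). So any two maximal-dimensional objects $G,G'$ of $\Fc(\A^H)$ are connected by a chain of morphisms as in Diagram \eqref{catenina}, and by part (i) each link of the chain identifies the corresponding $\Omega$-class with $[\Omega^{(\id)}]$ at the appropriate vertex; transporting around this chain of identifications gives $[\Omega^{(F\to G)}]\simeq[\Omega^{(F\to G')}]$. Equivalently, one observes that the union $\bigcup_m\gre{\Omega^{(m)}}$ over the morphisms $m:F\to G$ in a single ``fiber'' $\Fc(\A^H)$ forms a torus-like subcomplex on which all these circles are homologous.

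Part (iii) is the computational heart and the step I expect to be the main obstacle, since it is where the two families of generators $\wl$ and $\wg$ are matched up. Here I would write out both $1$-cycles explicitly at the level of cells. On the one side, $[\Omega^{(P\to W_i^P)}]$ is supported on the square with vertices $(P,[C_1,C_1]),(P,[W_i^P,C_1]),(P,[C_2,C_2]),(P,[W_i^P,C_2])$, where $C_1,C_2$ are the two chambers of $\A[P]$ adjacent to the wall $W_i^P$, namely $C_1=\newnot{R_H}{P}_{W_i^P}$ and $C_2=\newnot{-R_H}{P}_{W_i^P}$. On the other side, $[\Xi(H;B,P)]$ is supported on the square of Diagram \eqref{def:Xi} with vertices $v_i(P)=[C_i^P,C_i^P]$, $v_{i+1}(P)=[C_{i+1}^P,C_{i+1}^P]$ and edges $e_i(P)=[W_i^P,\newnot{B}{P}_{W_i^P}]$, $\overline{e}_i(P)=[W_i^P,(-\newnot{B}{P})_{W_i^P}]$. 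I would compare these cell by cell: since $W_i^P=X_H$, the two chambers of $\A[P]$ adjacent to $W_i^P$ are exactly $C_i^P$ and $C_{i+1}^P$ — so the vertex sets coincide — and the two edges $[W_i^P,\cdot]$ appearing are $[W_i^P,\newnot{B}{P}_{W_i^P}]$ and $[W_i^P,\newnot{-B}{P}_{W_i^P}]$, which are $e_i(P)$ and $\overline{e}_i(P)$, matching the edge set of $\Xi$. Thus $\gre{\Omega^{(P\to W_i^P)}}$ and $\gre{\Xi(H;B,P)}$ are literally the same subcomplex; it only remains to track orientations. The normalization of $[\Omega^{(P\to W_i^P)}]$ fixes the coefficient of $(P,[C_1,C_1])\to(P,[W_i^P,C_1])$ to be $+1$ with $C_1=\newnot{R_H}{P}_{W_i^P}$, while $[\Xi(H;B,P)]$ is normalized by the coefficient of $v_i(P)\to e_i(P)$ being $+1$ with $e_i(P)$ built from $\newnot{B}{P}$. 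The sign discrepancy between choosing $R_H$ versus $B$ on the positive side of $X_H$ is precisely recorded by $\epsilon(H,B)$: if $X_H\notin S(R_H,B)$ then $\newnot{R_H}{P}$ and $\newnot{B}{P}$ lie on the same side of $W_i^P$, the normalizations agree, and the coefficient is $+1$; if $X_H\in S(R_H,B)$ the two orientations are opposite and we pick up $-1$. This yields $[\Omega^{(P\to W_i^P)}]=\epsilon(H,B)[\Xi(H;B,P)]$, completing the proof.
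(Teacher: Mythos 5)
Your treatments of (i) and (iii) track the paper's own argument closely and are correct. For (i) you describe the same mapping-cylinder/prism construction inside $\int\mathscr D$ that the paper invokes, and for (iii) you carry out the cell-by-cell identification of the two squares and the orientation bookkeeping that the paper only sketches in one sentence — your version is actually more explicit than the printed proof and correctly identifies $\epsilon(H,B)$ as the sign recording whether $B$ and $R_H$ lie on the same side of $X_H$.

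Your argument for (ii), however, has a genuine gap. First, a factual slip: $H$ is a hypertorus of \emph{codimension} $1$, so $\A^H$ lives in a $(d-1)$-dimensional torus and $\Fc(\A^H)$ is the face category of a $(d-1)$-dimensional cellularization, not of a circle, whenever $d>2$; the chain of Diagram \eqref{catenina} is a feature of the rank-one quotient $\Fc(\A^M)$ with $M$ one-dimensional, not of $\Fc(\A^H)$. More seriously, the chaining argument is circular even where the circle picture applies. Part (i) gives you $\Omega^{(P_i\to G_{i-1})}\simeq\Omega^{(\id_{G_{i-1}})}$ and $\Omega^{(P_i\to G_i)}\simeq\Omega^{(\id_{G_i})}$, but to move along a link of the chain you must still compare $\Omega^{(P_i\to G_{i-1})}$ with $\Omega^{(P_i\to G_i)}$: these are two distinct squares in $\Sc(\A[P_i])$ (the faces $F_{m_1}$ and $F_{m_2}$ are opposite, so the chambers $\newnot{\pm R_H}{P_i}_{F_{m_1}}$ and $\newnot{\pm R_H}{P_i}_{F_{m_2}}$ differ once $\A[P_i]$ has rank $\geq 2$), and identifying them is exactly the content of (ii) with source $P_i$. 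Your backup observation about a ``torus-like subcomplex'' gestures at the right structure but isn't an argument. What the paper actually uses is that both $\Omega^{(F\to G)}$ and $\Omega^{(F\to G')}$ lie in the subposet $\Sc^{F_m}(\A[F])=\Sc^{F_{m'}}(\A[F])$ (equality holding because $F_m$, $F_{m'}$ share the same support $X^F_H$ and hence the same set of adjacent chambers), and this subposet is homotopy equivalent to a rank-one Salvetti complex $\simeq S^1$ by \cite[Prop.~3.3.5]{caldel17}; both squares are then generators of its $H_1\simeq\Z$ and one checks the orientation normalizations agree.
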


\begin{proof}
First notice that if $m:F\to G$ with $G$ of codimension $1$, then 
$$\Omega^{(m)}=\mathscr D (m)(\Omega^{id_G}).$$
In particular the complex $\Sal(\A)$, being obtained via a homotopy colimit as in \S\ref{dhcl}, contains the mapping cylinder  of $\mathscr D (m)\vert_{\Omega^{(m)}}$ in the form of the nerve of the subcategory on the right-hand side of Figure \ref{figxtra}.
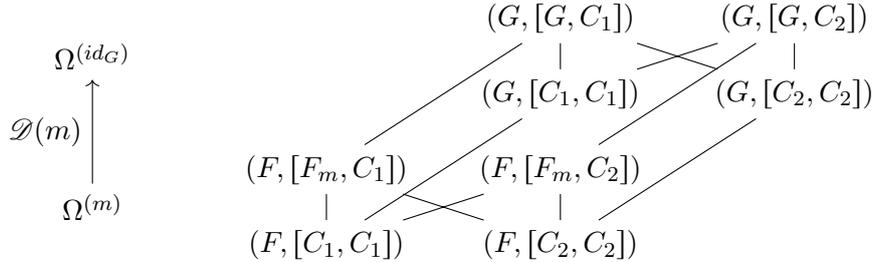
\begin{figure}[h]
{\small
\begin{tikzpicture}[x=4em]
\node (Lm) at (-4,.5) {$\Omega^{(m)}$};
\node (Am) at (-2,0)  {$(F,[C_1,C_1])$};
\node (Bm) at (-2,1)  {$(F,[F_m,C_1])$};
\node (Cm) at (0,0)  {$(F,[C_2,C_2])$};
\node (Dm) at (0,1)  {$(F,[F_m,C_2])$};
\draw (Am) -- (Bm) -- (Cm) -- (Dm) -- (Am);
\node (Li) at (-4,2.5) {$\Omega^{(id_G)}$};
\node (Ai) at (0,2)  {$(G,[C_1,C_1])$};
\node (Bi) at (0,3)  {$(G,[G,C_1])$};
\node (Ci) at (2,2)  {$(G,[C_2,C_2])$};
\node (Di) at (2,3)  {$(G,[G,C_2])$};
\draw (Ai) -- (Bi) -- (Ci) -- (Di) -- (Ai);
\draw (Am) -- (Ai);
\draw (Bm) -- (Bi);
\draw (Cm) -- (Ci);
\draw (Dm) -- (Di);
\draw[->] (Lm) -- (Li);
\node[anchor=east] (jm) at (-4,1.5) {$\mathscr D (m)$};
\end{tikzpicture}
}
\caption{The morphism $\mathscr D (m)\vert_{\Omega^{(m)}}$ and the subposet defining its mapping cylinder.} \label{figxtra}
\end{figure}
This mapping cylinder gives a homotopy inside $\Sal(\A)$ between $\gre{\Omega^{id_G}}$ and $\gre{\Omega^{(m)}}$ that sends edges to ``corresponding edges''. Thus (i) follows.

Part (ii) follows analogously by a homotopy between the two subcomplexes inside the subcomplex $\gre{Sc^G(\A[F])}=\gre{\Sc^{G'}(\A[F])}$  of $\Sal(\A)$ (see the discussion around Proposition \old{3.3.5} in \cite{caldel17}).

For part (iii) notice that, for any $B$, $\Omega^{(P\to W_i^P)}$ and $\Xi(H;B,P)$ are the same subposet. The associated chains differs by a sign depending on whether $B$ is on the same side of $X_H$ 
as $R_H$.
\end{proof}

\begin{Cor} \label{cor:wg}
For every $H\in \A$ the homology class of any $[\Omega^{(m)}]$ does not depend on the choice of $m:F\to G$ as long as $\operatorname{supp}(G)=H$. 
\end{Cor}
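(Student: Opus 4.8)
The plan is to deduce the statement directly from Lemma \ref{lem:relations_homology}, since that lemma already isolates all the elementary homotopies needed. First I would observe that, by Remark \ref{remark_polyhedral}, the subcategory $\Fc(\A^H)$ is the face category of a polyhedral cellularization of the $1$-dimensional layer complementary directions — more precisely, fixing the layer $H$, the relevant indexing category $\Fc(\A^H)$ has as objects the faces $F$ with $\supp(F)$ related to $H$, and the morphisms $m:F\to G$ with $\supp(G)=H$ are exactly those for which $\Omega^{(m)}$ is defined. The claim is that all these $[\Omega^{(m)}]$ are homologous.

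The key steps, in order: (1) Given any $m:F\to G$ with $\supp(G)=H$, part (i) of Lemma \ref{lem:relations_homology} gives $[\Omega^{(m)}]\simeq[\Omega^{(\id_G)}]$, so it suffices to compare the classes $[\Omega^{(\id_G)}]$ as $G$ ranges over the objects of $\Fc(\A^H)$ with $\supp(G)=H$. (2) Among these, the ones with $G$ of maximal dimension in $\Fc(\A^H)$ are all mutually homologous by part (ii). (3) For a general such $G$ that is not of maximal dimension, pick any morphism $G\to G'$ in $\Fc(\A^H)$ with $G'$ of maximal dimension (this exists because $\Fc(\A^H)$ is the face poset of a polyhedral cellularization, so every face is in the boundary of a top-dimensional face); applying part (i) to this morphism gives $[\Omega^{(\id_G)}]=[\Omega^{(G\to G')}]\simeq[\Omega^{(\id_{G'})}]$, reducing to the maximal-dimensional case handled in step (2). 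Chaining (1)–(3) shows every $[\Omega^{(m)}]$ with $\supp(G)=H$ is homologous to a single fixed class, which proves the corollary.

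I do not expect a serious obstacle here: the corollary is essentially a bookkeeping consequence of the three relations in Lemma \ref{lem:relations_homology}. The one point that needs a line of care is step (3) — checking that every object $G$ of $\Fc(\A^H)$ with $\supp(G)=H$ admits a morphism to a maximal-dimensional one, which is exactly the statement that in a polyhedral complex every cell lies in the closure of a top cell; this follows from Remark \ref{remark_polyhedral} together with \cite[Lemma A.19]{delpag}, which identifies the slice categories as face posets of polytopes. A secondary subtlety is orientation/sign consistency: one should note that the homotopies furnished by parts (i) and (ii) send the distinguished oriented edge of $\Omega^{(m)}$ to the distinguished oriented edge of the target (this is explicit in the proof of (i), where the mapping cylinder "sends edges to corresponding edges"), so the homologies hold as oriented $1$-cycles and not merely up to sign.
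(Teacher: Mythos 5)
Your plan uses the right ingredients --- parts (i) and (ii) of Lemma \ref{lem:relations_homology} --- and this is indeed the intended route (the paper states the corollary without an explicit proof, expecting exactly this derivation). However there are two problems with how the pieces are assembled.

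First, step (3) is vacuous. The condition $\supp(G)=H$ in $\Cc(\A)$ is \emph{equivalent} to $G$ being of maximal dimension in $\Fc(\A^H)$: any lower-dimensional face of $\Fc(\A^H)$ lies in the closure of the trace of at least one further hypertorus $H'$, so its support in $\Cc(\A)$ is strictly finer than $H$. Hence there is no ``general $G$ with $\supp(G)=H$ that is not of maximal dimension'' and the reduction you describe in step (3) never occurs.

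Second, and more substantively, step (2) does not follow from a single application of part (ii). As stated (and as proved, via the identification $\gre{\Sc^{G}(\A[F])}=\gre{\Sc^{G'}(\A[F])}$ inside $\Sal(\A)$), part (ii) compares $[\Omega^{(F\to G)}]$ with $[\Omega^{(F\to G')}]$ for a \emph{fixed} source $F$ with morphisms to both $G$ and $G'$; it applies only to two maximal faces that share a common boundary face. For arbitrary maximal $G,G'$ of $\Fc(\A^H)$, you need a chaining argument: since $\Fc(\A^H)$ is the face category of a polyhedral cellularization of the connected compact torus underlying $H$ (Remark \ref{remark_polyhedral}), there is a gallery $G=C_0,W_1,C_1,\ldots,W_k,C_k=G'$ in which each $W_i$ is a common wall of $C_{i-1}$ and $C_i$. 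For each $i$, part (ii) with $F=W_i$ gives $[\Omega^{(W_i\to C_{i-1})}]\simeq[\Omega^{(W_i\to C_i)}]$, and part (i) converts each side to $[\Omega^{(\id_{C_{i-1}})}]$ resp.\ $[\Omega^{(\id_{C_i})}]$; composing over $i$ yields $[\Omega^{(\id_G)}]\simeq[\Omega^{(\id_{G'})}]$. Your sentence ``the ones with $G$ of maximal dimension \ldots are all mutually homologous by part (ii)'' skips exactly this gallery step, which is where connectedness of $H$ enters and where the actual content of the corollary lies. Your observation about orientation consistency at the end is correct and worth retaining.
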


The following definition is now well-posed.

\begin{df}\label{def:omegahat}
For every $H\in \A$ let us denote  by $$\wg_H\in H_1(\Sal(\A),\mathbb Z)$$ the homology class of (any) $[\Omega^{(m)}]$ with $m:F\to G$ and $\operatorname{supp}(G)=H$.
\end{df}



We close with a formula relating the classes $\wg_H$ with the $\wl_B^M$ defined in  Definition \ref{df:lambdini}.
\begin{Prop}\label{prop:cambioL}
%

\begin{equation}\label{eq:lambda_cap}
\wl^M_B - \wl^M_{B'} = 
\sum_{\substack{
F\subseteq M \\ H\in S_F(B,B')\setminus \A_M 
}} \epsilon(H,B) \wg_H.
\end{equation}
\end{Prop}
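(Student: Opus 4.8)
The strategy is to combine the two preceding ``basis-change'' results: Proposition~\ref{prop:cambio_camera}, which expresses $[\vg_B^M]-[\vg_{B'}^M]$ as a signed sum of the cycles $[\Xi(H;B,P)]$, and Lemma~\ref{lem:relations_homology}(iii), which identifies $[\Xi(H;B,P)]$ (up to the sign $\epsilon(H,B)$) with a cycle of the form $[\Omega^{(P\to W_i^P)}]$, whose homology class is $\wg_H$ by Corollary~\ref{cor:wg} and Definition~\ref{def:omegahat}. So the proof is essentially a translation of an identity between \emph{chains} into an identity between \emph{homology classes}.

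\textbf{Key steps.}
First I would invoke Proposition~\ref{prop:cambio_camera}, rewriting its right-hand side by ranging the inner sum over $H\in S_F(B,B')\setminus\A_M$ for each face $F\subseteq M$ (matching the index set appearing in the statement to be proved). Then, for each such pair $(F,H)$, there is by construction a unique index $i$ with $W_i^F=X_H$, so Lemma~\ref{lem:relations_homology}(iii) gives the chain-level identity $[\Xi(H;B,F)] = \epsilon(H,B)\,[\Omega^{(F\to W_i^F)}]$. Passing to homology classes and using that $[\Omega^{(F\to W_i^F)}]$ represents $\wg_H$ (Corollary~\ref{cor:wg}, Definition~\ref{def:omegahat}, since $\operatorname{supp}(W_i^F)=H$), each summand $[\Xi(H;B,F)]$ contributes $\epsilon(H,B)\,\wg_H$ in homology. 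Summing over all $F\subseteq M$ and all $H\in S_F(B,B')\setminus\A_M$ yields precisely the right-hand side of \eqref{eq:lambda_cap}, while the left-hand side $[\vg_B^M]-[\vg_{B'}^M]$ descends to $\wl^M_B-\wl^M_{B'}$ by Definition~\ref{df:lambdini}.

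\textbf{Main obstacle.}
The only genuinely delicate point is the compatibility of \emph{orientations/signs}: one must check that the orientation convention fixing the coefficient of $v_0(M;B,P_0)\to e_0(M;B,P_0)$ for $[\vg_B^M]$ (Definition~\ref{df:lambdini}), the orientation of $[\Xi(H;B,P)]$ used in Proposition~\ref{prop:cambio_camera} (coefficient of $v_i\to e_i$ equal to $1$), and the orientation of $[\Omega^{(m)}]$ in Definition~\ref{df:Om} all line up so that the factor $\epsilon(H,B)$ —and nothing more— appears. Since Proposition~\ref{prop:cambio_camera} and Lemma~\ref{lem:relations_homology}(iii) are both phrased with the \emph{same} orientation of $\Xi$, these signs are already reconciled, and the identity \eqref{eq:lambda_cap} follows without further sign bookkeeping; I would simply remark on this rather than recompute it. A minor additional point is that $[\Xi(H;B,P)]$ is only defined when $H\in\A_P\setminus\A_M$, which is exactly the index set $S_P(B,B')\setminus\A_M$ over which we sum, so every term on the right is well-defined.
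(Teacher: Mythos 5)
Your argument is exactly the one the paper uses, just unpacked more explicitly: the paper's one-line proof invokes Lemma~\ref{lem:relations_homology} to rewrite the right-hand side of Proposition~\ref{prop:cambio_camera}, which is precisely the combination of Proposition~\ref{prop:cambio_camera}, Lemma~\ref{lem:relations_homology}(iii), Corollary~\ref{cor:wg}, and Definition~\ref{def:omegahat} that you spell out. Your attention to the orientation conventions is a welcome clarification but does not change the route.
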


\begin{proof}
 Lemma \ref{lem:relations_homology} allows us to eliminate the dependency on $P$ in the right-hand side of the equality of Proposition \ref{prop:cambio_camera}, and to rewrite it as in this Proposition's claim.
\end{proof}
\begin{Rem} With Remark \ref{rem:FundChoices}, the generators $\wl_{R_M}^M$ are independent on the choices in Definition \ref{df:gallery}. Proposition \ref{prop:cambioL} then shows how to recover all $\wl_B^M$ from the $\wl_{R_M}^M$. Thus our system of generators does not depend on the choices in Definition \ref{df:gallery}
\end{Rem}
\section{Cohomology of quotients and subcomplexes}
\label{sec:cohom}
\subsection{Quotients of toric arrangements}

\begin{df}
Let $\A$ be a toric arrangement and let $L \in \Cc(\A)$ be a layer. 
We consider $T$ as a group and we define $L_0$ as the subgroup of $T$ that has $L$ as a coset.
Recall that $\A_L$ is the subarrangement of $\A$ given by the hypertori that contain $L$. Consider the arrangement $$\AL:=\A_L/L_0:=\{H/L_0\mid H \in \A_L\} \textrm{ in } T/L_0.$$
We define the quotient map 
$$ 
f_L: M(\A) \to M(\AL)
$$ as the composition $\pi_{L_0} \circ i_L$
 of the inclusion 
$$
i_L: M(\A) \to M(\A_L)
$$
and the projection
$$
\pi_{L_0}: M(\A_L) \to M(\AL).
$$
\end{df}


%

\begin{df} The quotient by $L_0$ induces order-preserving maps
$$
\pi_L:\Cc(\A) \to \Cc(\AL)
\textrm{ and } 
\pi_L: \Fc(\A) \to \Fc(\AL)$$
and the latter map lifts to the natural order-preserving map
$$\Fc(\A^{\upharpoonright})\to \Fc ((\A_L)^{\upharpoonright}), \quad\quad F\mapsto \min_{\subseteq} \{G\in \Fc((\A_L)^{\upharpoonright})\mid F\subseteq G\}$$
sending every face to the smallest one that contains it.

\end{df}

\begin{Rem} \label{rem:quisquilie} We note two elementary facts about sign vectors that can be gathered directly from  \cite[Definition~\old{3.2.1}]{caldel17}. Let notation be as in Definition \ref{df:flat_face} and \ref{df:i_m} and Section \ref{setup3}. In particular, recall that  for any $F\in\Fc(\A)$, $\FL$  is the smallest flat of $\A[F]$ containing $X_L$.
\begin{itemize}
\item[(1)]
For all $m\in \operatorname{Mor}{\Fc(\A)}$ with source object $K$ and every $H\in \A[K]_{\FL}$ we have 
that
$
\gamma_{F_{\pi_L(m)}}(H/L_0) = \gamma_{F_m}(H).
$ 
\item[(2)] For all $G,K\in \A[F]$ and every flat $X$ of $\A[F]$ we have  $(G_X)_{(K_X)}=(G_K)_X$.
\end{itemize}
\end{Rem}
We see that the linear arrangement $(\AL)_0$ is $(\A_0)_{X_L}/X_L$, and in particular we have a natural map
$$
\pi_L: \Fc(\A_0)\to \Fc(\A_0)_{X_L}\simeq\Fc((\AL)_0), \quad F\mapsto F_{X_L}.
$$
Similarly, for every $F\in \Fc(\A)$, the arrangement $\AL[\pi_L(F)]$ is the essentialisation (see \cite[Lemma~5.30]{OT}) of the sub-arrangement of $\A[F]_{\FL}\subseteq \A[F]$ consisting of all hyperplanes containing $\FL$. Thus the map
$$
\pi_L^F: \Fc(\A[F]) \to \Fc(\AL[\pi_L(F)]), \quad K\mapsto K/X_L
$$
is order preserving and surjective, and restricts to an isomorphism of posets
$$\Fc(\A[F]_{\FL}) \longrightarrow \Fc(\AL[\pi_L(F)]). $$ 
Thus we can identify $\Sc(\AL[\pi_L(F)])$ with $\Sc(\A[F]_{\FL})$ and we have the following natural order-preserving map (see \cite[Definition~\olc{3.3.2}]{caldel17})
$$
b_{\FL}: \Sc(\A[F]) \to \Sc (\A_L[\pi_L(F)]), \quad [K,C] \mapsto 
[K_{\FL},C_{\FL}]
$$

\begin{Lem}\label{NatTra} For all layers $L$,
$$
(\pi_L, b_{X_L^{\ast}}) : \mathscr D(\A) \Rightarrow \mathscr D(\AL)
$$
is a natural transformation.
\end{Lem}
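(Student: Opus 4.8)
The plan is to verify directly that the pair of maps $(\pi_L, b_{X_L^\ast})$ satisfies the naturality square: for every morphism $m\colon F\to G$ in $\Fc(\A)$ (equivalently, in $\Fc(\A)^{\op}$ for the domain of $\mathscr D$), we must check that the diagram
\begin{equation*}
\begin{array}{ccc}
\Sc(\A[G]) & \xrightarrow{\ \mathscr D(m)\ } & \Sc(\A[F])\\[2pt]
\big\downarrow{\scriptstyle b_{\FL[G]}} & & \big\downarrow{\scriptstyle b_{\FL}}\\[4pt]
\Sc(\AL[\pi_L(G)]) & \xrightarrow{\ \mathscr D(\AL)(\pi_L(m))\ } & \Sc(\AL[\pi_L(F)])
\end{array}
\end{equation*}
commutes, where $\FL$ (resp.\ $\FL[G]$) denotes the smallest flat of $\A[F]$ (resp.\ $\A[G]$) containing $X_L$. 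Since all four posets are described combinatorially via sign vectors, and all four maps ($\mathscr D(m)$, the two $b$'s, and the bottom map, which is $\mathscr D(m)$ read off in the quotient arrangement via the identification $\Sc(\AL[\pi_L(F)])\cong\Sc(\A[F]_{\FL})$) are given by explicit sign-vector formulas, the whole statement reduces to a finite check on sign vectors. I would also remark at the outset that each $b_{\FL}$ is well-defined and order-preserving by the discussion preceding the lemma, and that $\pi_L\colon\Fc(\A)\to\Fc(\AL)$ is a functor; so the only content is the commutativity of the square.

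First I would fix $m\colon F\to G$ with source object $K=F$ (using the convention of Definition~\ref{df:i_m}, where $i_m\colon\Fc(\A[G])\to\Fc(\A[F])$), take an arbitrary element $[K,C]\in\Sc(\A[G])$, and chase it both ways. Going right-then-down gives $\mathscr D(m)[K,C]=[i_m(K),i_m(C)]$, then $b_{\FL}$ of that is $\big[(i_m(K))_{\FL},\,(i_m(C))_{\FL}\big]$. Going down-then-right gives $b_{\FL[G]}[K,C]=[K_{\FL[G]},C_{\FL[G]}]$, and then applying the bottom map — which under the identifications is again given by $i_{m'}$ for the corresponding morphism $m'=\pi_L(m)$ in $\AL$, acting on $\A[F]_{\FL}$ — yields $\big[i_{m'}(K_{\FL[G]}),\,i_{m'}(C_{\FL[G]})\big]$, where $i_{m'}$ uses the face $F_{\pi_L(m)}$ in place of $F_m$. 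So I must prove, for every face $P$ of $\A[G]$ (taking $P=K$ and $P=C$ in turn) and every hyperplane $H$ of $\A[F]_{\FL}$,
\begin{equation*}
\gamma_{(i_m(P))_{\FL}}(H) \;=\; \gamma_{i_{m'}(P_{\FL[G]})}(H).
\end{equation*}
Both sides are evaluated by splitting on whether $H\in\A[G]$ (equivalently, after passing to the relevant flats, whether $H$ corresponds to a hyperplane of $\AL[\pi_L(G)]$): if $H\notin\A[G]$ the left side is $\gamma_{F_m}(H)$ and the right side is $\gamma_{F_{\pi_L(m)}}(H/L_0)$, which agree by Remark~\ref{rem:quisquilie}(1) (noting $H\in\A[K]_{\FL}$ since $H$ contains $X_L$, hence $\FL$); if $H\in\A[G]$ both sides are just $\gamma_P(H)$, and the only thing to check is that restriction to $\FL$ commutes with restriction to $\FL[G]$ in the appropriate sense — this is precisely the content of Remark~\ref{rem:quisquilie}(2), applied with $X=\FL$ (or $\FL[G]$) inside $\A[F]$, after observing $X_L\subseteq\FL[G]\subseteq\FL$ forces $\FL[G]$ to be the flat of $\A[G]$ cut out by $\FL$.

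The main obstacle I expect is bookkeeping rather than conceptual: one has to be careful that the various ``restrict to the smallest flat containing $X_L$'' operations are consistently tracked as one moves between $\A[F]$, $\A[G]$, their quotients, and $\A_0$; in particular, that $F_m$ restricted to $\FL$ really equals (is identified with) $F_{\pi_L(m)}$ under $\Fc(\A[F]_{\FL})\xrightarrow{\sim}\Fc(\AL[\pi_L(F)])$, which is where Remark~\ref{rem:quisquilie}(1) does the essential work and where the hypothesis ``$H\in\A[K]_{\FL}$'' must be verified each time. Once the identifications are pinned down, the two displayed sign-vector computations each reduce to a two-case split that is settled by the two parts of Remark~\ref{rem:quisquilie}, and functoriality of $(\pi_L,b_{X_L^\ast})$ in $L$-free data (i.e.\ composability: $b$ of a composite is the composite of $b$'s, which follows from Remark~\ref{rem:quisquilie}(2)) finishes the verification that it is a genuine natural transformation of diagrams $\Fc(\A)^{\op}\to\mathtt{Posets}$.
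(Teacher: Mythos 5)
Your proof takes essentially the same route as the paper: reduce naturality to the sign-vector identity $i_{\pi_L(m)}(K_{X^G_L}) = (i_m(K))_{\FL}$ in $\A[F]_{\FL}$, then split on whether $H\in\A[G]$ and settle the $H\notin\A[G]$ case with Remark~\ref{rem:quisquilie}(1). One small correction: for $H\in\A[G]$ you do not need Remark~\ref{rem:quisquilie}(2) --- since $H\supseteq X_L$ forces $H\in\A[G]_{X^G_L}$, both sides evaluate to $\gamma_K(H)$ immediately from the definitions of the $i$-maps (Definition~\ref{df:i_m}) and of restriction to a flat (Definition~\ref{df:flat_face}); the paper invokes Remark~\ref{rem:quisquilie}(2) only later, in the proof of Lemma~\ref{lem:Phi}(3).
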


\begin{proof}[Proof of Lemma \ref{NatTra}] 
In order to check naturality pick any $m: F\to G$ in $\operatorname{Mor}(\Fc(\A))$ and $[K,C]\in \Sc(\A[G])$. According to the definitions given above we have the following diagram:

\begin{center}
	\begin{tikzcd}
(F,[i_m(K),i_m(C)]) 
\arrow[r, "(\pi_{L}{,}b_{\FL})"] & 
(\pi_L(F),[(i_{m}(K))_{\FL},(i_{m}(C))_{\FL}] ) 
\arrow[d,<->,"\stackrel{?}{=}"]\\
& (\pi_L(F),
[i_{\pi_L(m)}(K_{X^G_L}),
i_{\pi_L(m)}(C_{X^G_L})] )\\
(G,[K,C]) 
\arrow[r, "(\pi_{L}{,}b_{X^G_L})"]
\arrow[uu,"(m{,}j_m)"] &  
(\pi_L(G),[K_{X^G_L},C_{X^G_L}] )
\arrow[u,"(\pi_L(m){,}j_{\pi_L(m)})"]
\end{tikzcd}
\end{center}
and we need to prove equality of the two expressions on the top right-hand-side. It is enough to prove that, for every $K\in \Fc(\A[G])$,
\begin{equation}\label{eq:sign_vec}
i_{\pi_L(m)}(K_{X^G_L}) = (i_{m}(K))_{\FL} 
\quad \textrm{ in }
\A[F]_{\FL}.
\end{equation}
This we do using the definition of $\gamma_{i_m(K)}$ as in \cite[Remark~\olc{4.1.1}]{caldel17}. First consider the right-hand side of Equation \eqref{eq:sign_vec}, that it is defined by 
\begin{equation}\label{eq:idL1}
\gamma_{(i_{m}(K))_{\FL}}(H)=
\gamma_{i_{m}(K)}(H)= 
\left\{\begin{array}{ll}
\gamma_{i_m(K)}(H) & H\not\in\A[G]\\
\gamma_K(H) & H\in \A[G]
\end{array}\right.
\end{equation}
for all $H\in \A[F]_{\FL}$.
In the same vein, the left-hand side of Equation \eqref{eq:sign_vec} is determined by 
\begin{equation}\label{eq:idL2}
\gamma_{i_{\pi_L(m)}(K_{X^G_L})}(H)=
\left\{\begin{array}{ll}
\gamma_{F_{\pi_L(m)}}(H)=\gamma_{F_{m}}(H) &\textrm{if }H\not\in \A[G]\\
\gamma_{K_{X^G_L}}(H)=\gamma_{K}(H) &\textrm{if }H\in \A[G]
\end{array}\right.
\end{equation}
where we used Remark \ref{rem:quisquilie}.(1). Now, with  \cite[Remark~\olc{4.1.1}]{caldel17} we see that 
$\gamma_{F_m}(H)=\gamma_{i_m(K)}(H)$ for  $H\not\in\A[G]$,
completing the check of the identity between the two sides of the required equality, as expressed in Equations \eqref{eq:idL1} and \eqref{eq:idL2}.
\end{proof}

\begin{df}\label{df:map_Phi} The natural transformation of Lemma \ref{NatTra} induces a functor
$$
\Phi_L: \Sc(\A) \to \Sc(\AL)
$$
and thus a (cellular) map
$$
\Phi_L: \Sal(\A) \to \Sal(\AL).
$$
We slightly abuse notation by using the same symbol for the two maps. The distinction will be apparent from the context.
\end{df}
\begin{Lem}\label{lem:Phi} Let $\A$ be an essential toric arrangement in a torus $T$ of dimension $d$. Fix a layer $L \in \Cc$. 
\begin{itemize}[leftmargin=1.5em]
\item[(1)] For every $M\in \Cc_{d-1}$ such that $X_M\subseteq X_L$ 
and every chamber $B\in \Tc(\A_0)$, $\Phi_L(\Lambda^M_B)$ is a single vertex. In particular the induced homology homomorphism satisfies 
$$({\Phi_L})_*(\widehat{\lambda}_B^M)=0.$$
\item[(2)] Consider any $G\in \Fc(\A)$ with $\operatorname{supp}(G)=H\in \A$.\\
If $H\supseteq L$, then  $\Phi_L(\Omega^{(id_G)})=\Omega^{(id_{\pi_L(G)})}$.\\
More generally, choosing 
$R_{\pi_L(H)}:=(R_{H})_{X^G_L}/L_0$ 
for every $H\in \A$, we have 
$$({\Phi_L})_*(\wg_{H})=\left\{\begin{array}{ll}\wg_{\pi_L(H)} & \textrm{if } H\supseteq L \\
0 & \textrm{if } H \nsupseteq L
\end{array}\right.
$$
\item[(3)] For every layer $Y$ and all $F_0\in \Fc(\A_0)$ with 
$$
\Phi_L(\Sc_{Y,F_0}) \subseteq \Sc_{\pi_L(Y),\pi_L(F_0)}.
$$
\end{itemize}
\end{Lem}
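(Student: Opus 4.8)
The plan is to treat the three items in turn; each one reduces to evaluating the cellular map $\Phi_L=(\pi_L,b_{X_L^{\ast}})$ of Definition~\ref{df:map_Phi} on a small, explicit subcomplex of $\Sal(\A)$, the passage from a fibrewise sign-vector computation to a statement about $\int\mathscr D$ being supplied by the naturality of Lemma~\ref{NatTra}. Parts (1) and (2) are short once one observes that on the subcomplexes at hand the fibre maps $b_{\FL}$ are either constant or poset isomorphisms; the substantial work is in (3).

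\emph{Part (1).} Fix a face $F\subseteq M$. Since $X_M\subseteq X_L$, the smallest flat of $\A[F]$ containing $X_M$ is contained in $\FL$, so every hyperplane of $\A[F]$ that contains $\FL$ lies in $\A[F]\cap\A[M]$; by Remark~\ref{rem:mini} each cell of $\pth(M;B,F)$ has sign vector equal to $\gamma_{R_M}$ on those hyperplanes (for the edges $[W^F_{i+1},\newnot{B}{F}_{W^F_{i+1}}]$ one also uses Definition~\ref{df:flat_face}, $W^F_{i+1}$ being nonzero there). Hence $b_{\FL}$ is constant on $\pth(M;B,F)$, with value the vertex $[(R_M)_{\FL},(R_M)_{\FL}]$. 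Next, $X_M\subseteq X_L$ amounts to $M_0\subseteq L_0$ (a subtorus is determined by its Lie algebra, which for $L_0$ is $X_L$), so a lift of $M$ is an affine line parallel to every hyperplane of $(\A_L)^{\upharpoonright}$; consequently $\pi_L$ sends every face $F\subseteq M$ to one and the same object $F_\ast$ of $\Fc(\AL)$, and every morphism of $\Fc(\A^M)$ to $\mathrm{id}_{F_\ast}$. Feeding these two facts into the naturality square of Lemma~\ref{NatTra}, together with the description of the arrows of $\int\mathscr D$ around Diagram~\eqref{catenina}, identifies all the vertices $[(R_M)_{\FL},(R_M)_{\FL}]$; thus $\Phi_L$ sends every cell of $\Lambda^M_B$ to a single vertex of $\Sal(\AL)$, so $\Phi_L$ restricted to $|\Lambda^M_B|$ factors through a point and $(\Phi_L)_\ast(\wl^M_B)=0$.

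\emph{Part (2).} As $\operatorname{supp}(G)=H$, the local arrangement $\A[G]$ is the rank-one arrangement $\{X_H\}$, and $\Omega^{(id_G)}$ is precisely the fibre $\Sc(\A[G])$ over $G$, a square. If $H\supseteq L$ then $X_L\subseteq X_H$, whence $\FL=X_H$ and $b_{X_H}$ is the poset isomorphism $\Sc(\A[G])=\Sc(\A[G]_{X_H})\xrightarrow{\ \sim\ }\Sc(\AL[\pi_L(G)])$; moreover $\operatorname{supp}(\pi_L(G))=H/L_0=\pi_L(H)$, so the fibre over $\pi_L(G)$ is $\Omega^{(id_{\pi_L(G)})}$, and with the choice $R_{\pi_L(H)}:=(R_H)_{\FL}/L_0$ the marked chamber of $\Omega^{(id_G)}$ goes to the marked chamber downstairs. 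Hence $\Phi_L$ carries $\Omega^{(id_G)}$ isomorphically and orientation-preservingly onto $\Omega^{(id_{\pi_L(G)})}$, and $(\Phi_L)_\ast(\wg_H)=\wg_{\pi_L(H)}$ by Definition~\ref{def:omegahat}. If $H\nsupseteq L$ then $H\notin\A_L$, so a point in the relative interior of $G$ lies on no hypertorus of $\A_L$; therefore $\pi_L(G)$ is a chamber of $\AL$, the fibre $\Sc(\AL[\pi_L(G)])$ is a single point, and $\Phi_L$ collapses the whole fibre $\Sc(\A[G])\supseteq\Omega^{(id_G)}$ onto it, so $(\Phi_L)_\ast(\wg_H)=0$ as in Part~(1).

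\emph{Part (3).} One first checks the bookkeeping: $\pi_L$, being order-preserving and compatible with restriction of arrangements, maps the subcategory $\Fc(\A^Y)\subseteq\Fc(\A)$ into $\Fc(\AL^{\pi_L(Y)})\subseteq\Fc(\AL)$, and $\pi_L(F_0)=(F_0)_{X_L}$ has linear hull $X_{\pi_L(Y)}$, so that $\Sc_{\pi_L(Y),\pi_L(F_0)}$ is defined. It then suffices, for each $F\in\Fc(\A^Y)$, to show $b_{\FL}\big(\Sc^{F_0}(\A[F])\big)\subseteq\Sc^{\pi_L(F_0)}(\AL[\pi_L(F)])$: given a cell $[G,C_G]\in\Sc_C$ with $C$ a chamber of $\A[F]$ satisfying $C\geq F_0$ (writing $F_0$ also for its image in $\Fc(\A[F])$), the face $C_{\FL}$ is a chamber of $\A[F]_{\FL}$ with $C_{\FL}\geq(F_0)_{\FL}$, and Remark~\ref{rem:quisquilie}.(2) with $X=\FL$ gives $(C_G)_{\FL}=(C_{\FL})_{G_{\FL}}$, so $b_{\FL}([G,C_G])=[G_{\FL},(C_{\FL})_{G_{\FL}}]\in\Sc_{C_{\FL}}$; identifying $(F_0)_{\FL}$ with $\pi_L(F_0)$ through the isomorphism $\Fc(\A[F]_{\FL})\cong\Fc(\AL[\pi_L(F)])$, this lies in $\Sc^{\pi_L(F_0)}(\AL[\pi_L(F)])$. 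Naturality of Lemma~\ref{NatTra} then makes these fibrewise inclusions glue to the asserted inclusion of categories. The delicate point, which I expect to absorb most of the effort, is precisely this last step of Part~(3): checking through the essentialisation identification that the ``local chamber towards $F_0$'' operation is intertwined by $(-)_{\FL}$ with its counterpart downstairs, and pinning down the compatibility of $\pi_L$ with the restriction categories $\Fc(\A^Y)$ and with the linear-hull condition on $\pi_L(F_0)$.
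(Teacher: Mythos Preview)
Your proof is correct and follows the same approach as the paper's: in each part you evaluate the natural transformation $(\pi_L,b_{X_L^{\ast}})$ cellwise, using Remark~\ref{rem:mini} for (1), the rank-one description of $\A[G]$ for (2), and Remark~\ref{rem:quisquilie}.(2) for the key identity $(C_G)_{\FL}=(C_{\FL})_{G_{\FL}}$ in (3). Your write-up is in fact more explicit than the paper's in Part~(1), where the paper simply asserts that the image is a singleton without spelling out why $\pi_L$ collapses all faces $F\subseteq M$ to a single object; your argument via $X_M\subseteq X_L\Rightarrow M_0\subseteq L_0$ and the lift being parallel to every hyperplane of $(\A_L)^{\upharpoonright}$ fills this in.
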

\begin{proof}
(1): With Remark \ref{rem:mini}, for all $F\subseteq M$, we have 
$(C_i^F)_{\FL}=(W_i^F)_{\FL}=(R_M)_{\FL}$
for all  faces $F\subseteq M$  and $0\leq i\leq k(M,F)$. Therefore, comparing with Definition \ref{def:Lambda} and \ref{defpath}
we see that 
$\Lambda_B^M=\{(\pi_L(F),[(R_M)_{\FL},(R_M)_{\FL}])\}$, a singleton.

(2): If $H\supseteq L$, then $X^G_L=H$. Direct computations of the image under $\Phi_L$ for each of the elements of $\Omega^{id_G}$ (Remark \ref{rem:OmegaId}) and comparison with Definitions \ref{df:Om} and \ref{def:omegahat} verify the claims in this case. If $H\not\supseteq L$, then $X^G_L$ is 
the full vector space
and $\Phi_L(\Omega^{(id_G)})$ is a single vertex.

(3): Recall from \S\ref{setup6} that $\Sc_{Y,F_0}$ is defined as $\GC\mathscr D_{Y,F_0}$ for a subdiagram $\mathscr D_{Y,F_0}$ of $\mathscr D$ on the index category $\Fc(\A^Y)$. Similarly, if $\overline{\mathscr D}$ is the diagram giving $\Sc(\AL)$, then $\Sc_{\pi_L(Y),\pi_L(F_0)}$ is $\GC\overline{\mathscr D}_{\pi_L(Y),\pi_L(F_0)}$ for the subdiagram $\overline{\mathscr D}_{\pi_L(Y),\pi_L(F_0)}$ of $\overline{\mathscr D}$ over the index category $\Fc(\AL^{\pi_L{(Y)}})$. Since $\Phi_L$ is induced by the natural transformation $(\pi_L,b_{X_L}^*)$, in order to prove the claim we have to prove that $(\pi_L,b_{X_L}^*)$ restricts to a natural transformation $\mathscr D_{Y,F_0} \Rightarrow \overline{\mathscr D}_{\pi_L(Y),\pi_L(F_0)}$.

Since $\pi_L(\Fc(\A^Y)) \subseteq \Fc((\AL)^{\pi_L(Y)})$, we are left proving that, for every face $F$ in $\Fc(\A^Y)$, $b_{X^{\ast}_L}(\Sc^{F_0}(\A[F]))\subseteq \Sc^{\pi_L(F_0)}(\AL[\pi_L(F)])$.

Pick any $[G,K]\in\Sc^{F_0}(\A[F])$. By definition this means that $K=B_G$ for some chamber $B\in \Tc(\A[F])$ adjacent to $F_0$. Now, since $\pi_L^F$ is order preserving, $\pi_L^F(B)$ is adjacent to $\pi_L^F(F_0)$ and  $\pi_L^F(B)_{\pi_L^F(G)}=(B_{\FL})_{G_{\FL}} = (B_G)_{\FL}=\pi_L^F(K)$, where the second equality uses Remark \ref{rem:quisquilie}.(2). Thus, $b_{X_L^F}([G,K])\in \Sc^{\pi_L(F_0)}(\AL[\pi_L(F)])$.

\end{proof}

\begin{Cor}\label{lem:Phi_bis}

Fix a chamber $B \in \Tc(\A_0)$ and a layer $L \in \Cc(\A)$.
We have 
$$
({\Phi_{L}})_*(H_*(\Sc_{T,B};\Q)) \subset H_*(\Sc_{\pi_L(T),\pi_L(B)};\Q)
$$
In particular, for every $M\in \Cc_{d-1}$ we have 
\begin{equation}\label{eq:formula_phi}
({\Phi_L})_*(\wl_B^M) \in  H_*(\Sc_{\pi_L(T),\pi_L(B)};\Q).
\end{equation}
\end{Cor}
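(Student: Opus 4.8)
The plan is to deduce this corollary directly from Lemma \ref{lem:Phi}(3) and the factorization of the generators $\wl_B^M$ through the subcomplexes $\Sc_{T,B}$. First I would invoke Remark \ref{rem:contenimenti}: for any $1$-dimensional layer $M$ and any chamber $B\in\Tc(\A_0)$, the subcategory $\vg_B^M$ sits inside $\Sc_{L,\overline B\cap X_L}$ for every layer $L\supseteq M$; in particular, taking $L=T$ (so that $X_T$ is the ambient vector space and $\overline B\cap X_T=B$), we have $\vg_B^M\subseteq \Sc_{T,B}$. Hence the cycle $[\vg_B^M]$, and therefore the homology class $\wl_B^M$, lies in the image of $H_*(\Sc_{T,B};\Q)\to H_*(\Sal(\A);\Q)$. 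This already reduces the second assertion \eqref{eq:formula_phi} to the first.

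For the first assertion, I would simply specialize Lemma \ref{lem:Phi}(3) to the case $Y=T$ and $F_0=B$: it gives $\Phi_L(\Sc_{T,B})\subseteq \Sc_{\pi_L(T),\pi_L(B)}$. Since $\Phi_L\colon\Sal(\A)\to\Sal(\AL)$ is cellular (Definition \ref{df:map_Phi}) and this containment is a containment of subcomplexes, functoriality of homology yields a commuting square
\begin{equation*}
\begin{tikzcd}
H_*(\Sc_{T,B};\Q)\arrow[r]\arrow[d,"(\Phi_L)_*"] & H_*(\Sal(\A);\Q)\arrow[d,"(\Phi_L)_*"]\\
H_*(\Sc_{\pi_L(T),\pi_L(B)};\Q)\arrow[r] & H_*(\Sal(\AL);\Q),
\end{tikzcd}
\end{equation*}
where the horizontal maps are induced by the respective subcomplex inclusions. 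Chasing this square gives exactly $(\Phi_L)_*\bigl(H_*(\Sc_{T,B};\Q)\bigr)\subseteq H_*(\Sc_{\pi_L(T),\pi_L(B)};\Q)$ inside $H_*(\Sal(\AL);\Q)$, which is the claim.

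Finally, to obtain \eqref{eq:formula_phi}, I would combine the two steps: $\wl_B^M$ is the image of a class in $H_*(\Sc_{T,B};\Q)$ under the inclusion-induced map (first step), and $(\Phi_L)_*$ carries that subspace into $H_*(\Sc_{\pi_L(T),\pi_L(B)};\Q)$ (second step); hence $(\Phi_L)_*(\wl_B^M)\in H_*(\Sc_{\pi_L(T),\pi_L(B)};\Q)$. I expect no serious obstacle here: the corollary is essentially a bookkeeping consequence of Lemma \ref{lem:Phi}(3) together with Remark \ref{rem:contenimenti}, and the only point requiring a line of care is checking that $\pi_L(F_0)$ in the notation of Lemma \ref{lem:Phi}(3) agrees with $\pi_L(B)$ in the present statement when $F_0=B$ — which is immediate since $\pi_L$ on $\Fc(\A_0)$ sends $B\mapsto B_{X_L}$ and $\pi_L(T)=T/L_0$ has $X_{\pi_L(T)}$ the whole ambient space of $(\AL)_0$, so the base point data matches the convention used to define $\Sc_{\pi_L(T),\pi_L(B)}$.
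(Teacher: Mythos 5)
Your proposal is correct and follows essentially the same route as the paper: you invoke Remark \ref{rem:contenimenti} for the containment $\vg_B^M\subseteq\Sc_{T,B}$ and Lemma \ref{lem:Phi}(3) (with $Y=T$, $F_0=B$) for $\Phi_L(\Sc_{T,B})\subseteq\Sc_{\pi_L(T),\pi_L(B)}$, then conclude by naturality of the inclusion-induced maps in homology — exactly the commutative diagram the paper draws. The closing sanity check that $\pi_L(B)=B_{X_L}$ matches the convention for $\Sc_{\pi_L(T),\pi_L(B)}$ is a reasonable extra remark but not something the paper needs to spell out.
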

\begin{proof} 
The result follows from the following commutative diagram:
$$
\begin{tikzcd}
\gre{\vg_B^M} \arrow[r,hookrightarrow]&\gre{\Sc_{T,B} }
\arrow[d, "{\Phi_L}"]
\arrow[r,hookrightarrow] & 
\Sal(\A)
\arrow[d,"\Phi_L"]\\
&\gre{\Sc_{\pi_L(T),\pi_L(B)}}
\arrow[r,hookrightarrow]
& 
\Sal(\AL)
\end{tikzcd}
$$
where the existence of the leftmost vertical arrow follows from Lemma \ref{lem:Phi}.(3), and the leftmost inclusion follows from Remark \ref{rem:contenimenti}.
\end{proof}

%
%


\begin{Rem}\label{rmk:restrizioni}
Let $L \in \Cc$ be a layer and consider the map 
$\pi_L: \Cc(\A) \to \Cc(\AL)$.
Let $Y\subseteq L$.
We have that $\pi_{L}(L) = \pi_{L}(Y)$ if and only if $Y \subseteq L$. 
\end{Rem}


\begin{Rem}\label{rem:generatiindimensioneuno}
Let $F \in \Fc(\A_0)$ with $\supp(F)=L_0$. Recall from Remark \ref{htprod}
that the subcomplex $\gre{\Sc_{L,F}} \subset \Sal({\A})$ is homotopy equivalent to the product $L \times M(\A_\CCC[L])$, where $M(\A_\CCC[L])$ is the complement of the essentialization of the complexified central linear arrangement $\A_\CCC[L] = \A[L] \otimes_\R \CCC$. Hence the cohomology ring of 
$\Sc_{L,F}$
is generated in degree $1$.

In particular the cohomology ring $H^*(\Sc_{\pi_{L}(L),\pi_L(F)};\Q)$ is the Orlik-Solomon algebra generated by the restrictions of the forms $\omega_{\pi_L(H)}$ for $L \subset H$.
\end{Rem}

\subsection{Group action and inclusions}

\begin{Lem} \label{lem:traslazione}
	Let $\A$ be a toric arrangement. Assume that $\A$ is invariant by the action of an element $g \in T$. Then the multiplication by $g$ induces maps
	$\mu_g:\Sal(\A) \to \Sal(\A)$ and $\mu_g: \Sc_{L,F} \to \Sc_{L,F}$ such that the following diagram commutes
	$$
	\begin{tikzcd}
        M(\A) \ar["g"]{r} & M(\A)\\
        \Sal(\A) \ar[hook]{u} \ar["\mu_g"]{r} & \Sal(\A) \ar[hook]{u} \\
        \gre{\Sc_{L,F}} \ar[hook]{u}\ar["\mu_g"]{r} & \gre{\Sc_{L,F}} \ar[hook]{u}
	\end{tikzcd}
$$
Moreover the map $\mu_g: \gre{\Sc_{L,F} }\to \gre{\Sc_{L,F}}$ is homotopy equivalent to the identity.	
\end{Lem}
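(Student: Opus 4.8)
The statement has three parts: that multiplication by $g$ descends to cellular maps on $\Sal(\A)$ and on each subcomplex $\gre{\Sc_{L,F}}$, that these maps are compatible (via the vertical inclusions) with the translation $g\colon M(\A)\to M(\A)$, and that the self-map $\mu_g$ of $\gre{\Sc_{L,F}}$ is homotopic to the identity. The plan is to build everything at the combinatorial level, from the $g$-invariance hypothesis, and then read off the topological consequences from the identification of $\Sal(\A)$ as a subset of $M(\A)$ recorded in \S\ref{setup6}.

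\textbf{Step 1: the combinatorial action.} Since $\A$ is $g$-invariant, translation by $g$ is a cellular automorphism of the polyhedral cellularization of the compact torus $(S^1)^d$ induced by $\A$, hence induces an automorphism of the face category $\Fc(\A)$; call it $\mu_g$ as well. For each $F\in\Fc(\A)$ the translation carries the tangent space at (a point of) $F$ isometrically to the tangent space at $gF$ and carries the local hyperplane arrangement $\A[F]$ onto $\A[gF]$, hence induces isomorphisms $\Fc(\A[F])\to\Fc(\A[gF])$ and $\Tc(\A[F])\to\Tc(\A[gF])$ commuting with the sign-vector descriptions of $i_m$ (Definition \ref{df:i_m}) for every morphism $m$. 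Consequently $\mu_g$ induces a natural isomorphism of the diagram $\mathscr D$ with itself covering the automorphism of $\Fc(\A)^{op}$, and therefore — by functoriality of the Grothendieck construction — an automorphism $\mu_g$ of $\Sc(\A)=\int\mathscr D$ and of its nerve $\Sal(\A)=\gre{\Sc(\A)}$.

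\textbf{Step 2: compatibility and the subcomplex.} By the construction of $\Sal(\A)$ as the quotient $q(\Sal(\Aup))$ realized inside $M(\A)$, one checks that the cells of $\Sal(\A)$ are carried by $g$ to cells of $\Sal(\A)$: indeed $g$ permutes the affine hyperplanes $\Aup$, hence acts on $\Sal(\Aup)$ by an affine (simplicial) automorphism covering translation on $M(\Aup)$, and this action descends through $q$ to the combinatorial $\mu_g$ of Step 1. This gives commutativity of the top square. For the subcomplex: $\Sc_{L,F}=\int\mathscr D_{L,F}$ is the Grothendieck construction of the subdiagram of $\mathscr D$ on the subcategory $\Fc(\A^L)$ with values the subposets $\Sc^{F}(\A[K])$. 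One must check that $\mu_g$ preserves this data; this is the one point that needs the hypothesis in its strongest form, namely that $g$ fixes the layer $L$ setwise (equivalently $\mu_g$ fixes $F\in\Fc(\A_0)$ and the corresponding flat $X_L$) — this is implicit in fixing the pair $(L,F)$ as the statement does. Granting that, $\mu_g$ restricts to an automorphism of $\Fc(\A^L)$ and carries each $\Sc^{F}(\A[K])$ isomorphically to $\Sc^{F}(\A[gK])$ by the sign-vector compatibility of Step 1, whence $\mu_g$ restricts to a cellular automorphism of $\gre{\Sc_{L,F}}$ making the bottom square commute.

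\textbf{Step 3: $\mu_g\simeq\id$ on $\gre{\Sc_{L,F}}$.} Here I would invoke Remark \ref{htprod}: $\gre{\Sc_{L,F}}$ is homotopy equivalent to $|\Fc(\A^L)|\times\Sal(\A[L])$, and more precisely (Remark \ref{rem:generatiindimensioneuno}) to $L\times M(\A_\CCC[L])$, where the $L$-factor is the compact torus that is the $1$-eigenspace of the cellularization and $M(\A_\CCC[L])$ is the complement of the essentialized central linear arrangement. Under this identification $\mu_g$ is, up to homotopy, translation by (the image of) $g$ on the torus factor $L$ and the identity on the linear-arrangement factor — because translations act trivially on the local arrangements $\A[K]$ and on the poset $\Fc(\A^L)$ up to the labelled reindexing, which is carried by a contracting homotopy inside each slice. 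Since a torus is a topological group, translation on $L$ is homotopic to the identity through the path from $1$ to $g$; taking the product with the identity on $M(\A_\CCC[L])$ gives the desired homotopy $\mu_g\simeq\id$ on $\gre{\Sc_{L,F}}$. The main obstacle is Step 2, i.e.\ verifying carefully — at the level of sign vectors and the maps $i_m$, using the computations in \cite[\S4.1]{caldel17} and Remark \ref{rem:quisquilie} — that $\mu_g$ genuinely preserves the stratifying subposets $\Sc^{F}(\A[K])$ and the index subcategory $\Fc(\A^L)$, rather than merely permuting them; once that bookkeeping is done, Steps 1 and 3 are formal.
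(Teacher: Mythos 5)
Your proposal follows essentially the same route as the paper: lift to the universal cover $\mathbb R^d$ to get $\mu_g$ as a quotient of a simplicial automorphism of $\Sal(\A^\upharpoonright)$, then use the homotopy equivalence $|\Sc_{L,F}|\simeq |\Fc(\A^L)|\times\Sal(\A[L])$ of Remark \ref{htprod} to split $\mu_g$ as (multiplication by $g$ on $L$) $\times$ (identity), and conclude since translation on a torus is null-homotopic. One point the paper makes cleaner and your Step~3 leaves vague: because $\A[F]=\A[\supp F]$ is literally the same subarrangement of $\A_0$ as $\A[gF]$, the cellular map is exactly $\mu_g(F,[G,C])=(gF,[G,C])$ — the \emph{identity} on the $[G,C]$ coordinate, with no ``labelled reindexing'' or ``contracting homotopy inside each slice'' required; your explicit flagging of the hidden assumption $gL=L$ (implicit in the paper's statement of the restriction $\mu_g\colon\Sc_{L,F}\to\Sc_{L,F}$) is a useful clarification.
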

\begin{proof}
The multiplication by $g$ on $T$ lifts to a translation $\tau_g$ in the universal cover $ \mathbb R^d$ of $T$, where the periodic arrangement $\A^{\upharpoonright}$ is invariant under $\tau_g$. In particular, $\tau_g$ leaves the poset of faces invariant and, hence, induces an automorphism $\mu^\upharpoonright_g$ of the Salvetti complex $\Sal(\A^\upharpoonright)$. Now $\tau_g$ commutes with the standard inclusion $\iota: \Sal(\A^\upharpoonright)\hookrightarrow M(\A^\upharpoonright)$ as well as with the translations of $\mathbb Z^d\subseteq \mathbb R^d$. Hence, so does $\mu^\upharpoonright_g$ and, since $\Sal(\A)=\Sal(\A^\upharpoonright)/\mathbb Z^d$ (see \cite[Theorem~\olc{4.1.3}]{caldel17}), it induces the required map $\mu_g$ and the top half of the diagram commutes. 
Now, the explicit form of $\mu_g$ as a cellular map on $\Sal(\A)=\vert\int \mathscr D\vert$ is 
$$
\mu_g(F,[G,C]) = (g F, [G,C])
$$
for all $F\in \Fc(\A)$ and every $[G,C]\in \Sc(\A[F])=\Sc(\A[gF])$. (Recall that, by definition, $\A[F]=\A[L]$ where $L$ is the layer supporting $F$, and so $\A[F]=\A[gF]$.) 

In particular, this map restricts to every layer $L$ and to $\mu_g:\Sc_{L,F}\to \Sc_{L,F}$. Under the homotopy equivalence $|\Sc_{L,F}|\simeq |\Fc(\A^L)| \times \Sal(\A[L])$ of Remark \ref{htprod} 
the map $\mu_g$ is the identity on the second component and the cellular map induced by multiplication with $g$ in $T$ in the first component. But the continous map $L\to L$ defined by multiplication with $g$ is homotopic to the identity on $L\simeq  |\Fc(\A^L)|$  and any homotopy accomplishing this can be composed with the identity to give a homotopy between $\mu_g:\Sc_{L,F}\to \Sc_{L,F}$ and the identity.
\end{proof}

\begin{Lem} \label{lem:sottoarrangiamento}
 Let $\A'$ be a sub-arrangement of $\A$ of the same rank. The inclusion $M(\A) \into M(\A')$ induces a map
 $$
 \Psi: \Sc(\A) \to \Sc(\A')
 $$
 that restricts to 
 $$\Psi: \Sc_{L,F} \to \Sc_{\tilde{L},\tilde{F}}
 $$
 where $\tilde{L}$ (resp.\ $\tilde{F}$) is the smallest layer of $\A'$ containing $L$ (resp.\ the smallest face of $\A'$ with support $\tilde{L}$ containing $F$) such that the following diagram commutes up to homotopy (as before, we slightly abuse notation by using $\Phi$ for the induced cellular map $\Sal(\A)\to\Sal(\A')$).
 $$
 \begin{tikzcd}
        M(\A) \ar[hook]{r} & M(\A')\\
        \Sal(\A) \ar[hook]{u} \ar["\Psi"]{r} & \Sal(\A') \ar[hook]{u} \\
        \gre{\Sc_{L,F}} \ar[hook]{u}\ar["\Psi"]{r} & \gre{\Sc'_{\tilde{L},\tilde{F}}} \ar[hook]{u}
	\end{tikzcd}
$$
\end{Lem}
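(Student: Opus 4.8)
The statement asserts the existence of a cellular map $\Psi$ fitting into the indicated homotopy-commutative diagram, compatibly with the subcomplexes $\gre{\Sc_{L,F}}$. My plan is to mimic the construction of $\Phi_L$ in Definition \ref{df:map_Phi}: produce $\Psi$ at the level of the Grothendieck construction from a natural transformation of diagrams. The first step is to set up the combinatorial data on the $\A'$ side. Since $\A'\subseteq\A$ has the same rank, every layer $L$ of $\A$ is contained in a unique smallest layer $\tilde L$ of $\A'$, and every face $F\in\Fc(\A)$ is contained in a unique smallest face $\tilde F\in\Fc(\A')$; this defines order-preserving maps $\Fc(\A)\to\Fc(\A')$ and $\Cc(\A)\to\Cc(\A')$. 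Moreover, for every $F\in\Fc(\A)$, the local arrangement $\A'[\tilde F]$ is a subarrangement of $\A[F]$ (every hyperplane of $\A'$ through $\tilde F$ passes through $F$), so there is a natural ``deletion'' map $\Sc(\A[F])\to\Sc(\A'[\tilde F])$ sending $[G,C]$ to $[G',C']$, where $G',C'$ are the face and chamber of $\A'[\tilde F]$ determined by the sign vector of $G$, resp.\ $C$, restricted to $\A'[\tilde F]$.

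**Key steps.** With this data in place, the second step is to check that the pair (the face map $\Fc(\A)\to\Fc(\A')$, the collection of deletion maps $\Sc(\A[F])\to\Sc(\A'[\tilde F])$) is a natural transformation $\mathscr D(\A)\Rightarrow\mathscr D(\A')$ in the appropriate sense. This is a sign-vector computation entirely analogous to the proof of Lemma \ref{NatTra}: one must verify that restricting the sign vector of $i_m(K)$ to $\A'[\tilde G]$ agrees with $i_{\tilde m}$ applied to the restriction of $K$, which follows from Definition \ref{df:i_m} once one knows that $F_m$ restricts correctly. The third step is then formal: a natural transformation of diagrams over (a map of) index categories induces a functor on Grothendieck constructions $\Psi:\Sc(\A)\to\Sc(\A')$, hence a cellular map $\Psi:\Sal(\A)\to\Sal(\A')$, and the top square of the diagram commutes up to homotopy because both $\Psi$ and the inclusion $M(\A)\hookrightarrow M(\A')$ are, on the level of the affine covers, induced by the forgetful inclusion $\Sal(\Aup)\hookrightarrow\Sal((\A')^\upharpoonright)\hookrightarrow M((\A')^\upharpoonright)$ and descend compatibly to the $\Z^d$-quotients (as in the proof of Lemma \ref{lem:traslazione}). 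The fourth step restricts $\Psi$ to the subcategories: one shows $\Psi$ carries $\Sc^{F_0}(\A[F])$ into $\Sc^{\tilde F_0}(\A'[\tilde F])$ for suitable $\tilde F_0$ (the image of $F_0$ under the deletion map on $\Fc(\A_0)$), which is the same kind of ``$b$ preserves the stratification'' argument as in Lemma \ref{lem:Phi}.(3), and concludes that $\Psi(\Sc_{L,F})\subseteq\Sc_{\tilde L,\tilde F}$.

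**Main obstacle.** The routine part is the Grothendieck-construction formalism and the homotopy-commutativity of the top square. The delicate point I expect to be the main obstacle is the naturality check in step two: one must be careful that the deletion maps on local Salvetti posets genuinely commute with the structure maps $\mathscr D(m)=\bigl([G,K]\mapsto[i_m(G),i_m(K)]\bigr)$, because $i_m$ is defined via the auxiliary face $F_m$ (the minimal face of $\A[G]$), and passing to a subarrangement can move which face plays that role. The resolution should be that, since $\A'[\tilde F]\subseteq\A[F]$, the minimal face of $\A'[\tilde G]$ is the restriction of $F_m$, and the sign-vector formula in Definition \ref{df:i_m} is compatible with restricting the set of hyperplanes; I would isolate this as a small sign-vector lemma (analogous to Remark \ref{rem:quisquilie}) and then the rest of the argument is bookkeeping. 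A secondary subtlety is verifying that $\tilde F_0$ has linear hull $X_{\tilde L}^{\tilde F}$ so that $\Sc_{\tilde L,\tilde F}$ is actually defined in the sense of Definition \ref{df:subcatD}; this follows because essentialization commutes with the relevant intersections.
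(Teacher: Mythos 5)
Your proposal is essentially correct, but it takes a genuinely different route from the paper. You set up $\Psi$ via a natural transformation $\mathscr D(\A)\Rightarrow\mathscr D(\A')$ (modelled on Lemma \ref{NatTra} and Definition \ref{df:map_Phi}), which forces you to verify naturality against the structure maps $\mathscr D(m)$ and to track how $F_m$ behaves under deletion — precisely the subtlety you flag. The paper sidesteps this entirely: it builds the map at the level of the universal cover, where $\Sc(\A^\upharpoonright)$ is a single poset rather than a Grothendieck construction. There one just observes that the ``smallest containing face'' assignment $s:\Fc(\A^\upharpoonright)\to\Fc((\A')^\upharpoonright)$ is order-preserving and that $[G,C]\mapsto[s(G),s(C)]$ preserves the Salvetti order (because every hyperplane of $\A'$ separating $s(C)$ from $s(C')$ separates $C$ from $C'$); the $\mathbb Z^d$-equivariance then descends it to $\Psi$ on $\Sal(\A)$ with the same explicit formula $\Psi(F,[G,C])=(\tilde F,[\tilde G,\tilde C])$ you would arrive at. This avoids any bookkeeping with $i_m$, $F_m$, or morphisms in the acyclic category $\Fc(\A)$. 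Your route works too, but it trades a short geometric argument for a longer sign-vector naturality check.

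One place where I would push back: you classify the homotopy-commutativity of the top square as ``formal'' and attribute it to both maps being ``induced by the forgetful inclusion'' and descending ``as in Lemma \ref{lem:traslazione}''. That is not quite right: $\Psi$ is not an inclusion $\Sal(\A^\upharpoonright)\hookrightarrow\Sal((\A')^\upharpoonright)$, and the two composites $\Sal(\A)\hookrightarrow M(\A)\hookrightarrow M(\A')$ and $\Sal(\A)\xrightarrow{\Psi}\Sal(\A')\hookrightarrow M(\A')$ pick \emph{different} base points in the open convex cells $G+iC$. The paper supplies a genuine geometric step here — using convexity of $U(s(x))$ to build a straight-line homotopy $\alpha_x(t)$ between $\xi(x)$ and $\xi'(s(x))$, and checking via nested cells that the affine extensions $\xi_t$ stay inside $M((\A')^\upharpoonright)$. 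This is a short argument but not a formality, and your sketch omits it; if you carry out your natural-transformation construction, you still need an argument of this kind to compare with the topological inclusion $M(\A)\hookrightarrow M(\A')$.
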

\begin{proof}
Consider the arrangements $\A^{\upharpoonright}$ and $(\A')^{\upharpoonright}$ in $\mathbb R^d$. Clearly every open cell $F$ of the polyhedral stratification of $V$ induced by $\A^{\upharpoonright}$ is contained in a unique cell $s(F)$ of the stratification induced by $(\A')^{\upharpoonright}$. This defines an order-preserving function $s: \Fc(\A^{\upharpoonright})\to \Fc ((\A')^{\upharpoonright})$ that  
induces  a poset map $\Sc(\A^{\upharpoonright})\to \Sc((\A')^{\upharpoonright})$, $[G,C]\mapsto [s(G),s(C)]$ (consider any two chambers $C$, $C'$ of $\A^\upharpoonright$: every hyperplane separating $s(C)$ from $s(C')$ also separates $C$ from $C'$, therefore $[G,C]\geq [G',C']$ implies $[s(G),s(C)]\geq [s(G'),s(C')]$).  

Recall that the inclusions $\xi:\Sal(\A^{\upharpoonright})\hookrightarrow M(\A^\upharpoonright)$, resp.\ $\xi':\Sal({\A'}^{\upharpoonright})\hookrightarrow M({\A'}^\upharpoonright)$, are defined by affine extension of any choice of a point $\xi([G,C])\in U([G,C])$, where we write $U([G,C]):=G+ iC$, for every $[G,C]\in \Sc({\A}^{\upharpoonright})$, resp.\ $\xi'([G',C'])\in U([G',C'])$ for every $[G',C']\in \Sc({\A'}^{\upharpoonright})$. In particular, different choices of such points will extend to homotopic maps.

Now consider any $x:=[G,C]\in \Sc(\A^{\upharpoonright})$. Then, $U([G,C])\subseteq U(s([G,C])$ and, by convexity, the latter contains the segment $\alpha_{x}(t)$, $0\leq t \leq 1$, between $\xi([G,C])$ and $\xi'([G,C])=\xi'[s(G),s(C)]$. Moreover, for every chain $x_1<\ldots < x_k$ in $\Sc(\A^\upharpoonright)$ and every $t$ the simplex $\sigma:=\operatorname{conv}\{\xi(\alpha_{x_i}(t))\}_i$ is contained in $M((\A')^\upharpoonright)$ (indeed $\alpha_{x_i}(t)\in U(s(x_i))$ for all $i$ and $s(x_1)<\ldots < s(x_k)$, so that $\xi'(s(x_i)):= \alpha_{x_i}(t)$ is a valid choice for the construction of $\xi'$ and $\sigma$, being in the image of the affine extension of this $\xi'$, is contained in $M((\A')^\upharpoonright)$ as explained above).
Now setting $\xi_t(x):=\alpha_{x}(t)$ and letting $\xi_t$ be the affine extension on $\Sal(\A)$ defines the desired homotopy between $\xi$ and $\xi'(s)$ in $M((\A')^\upharpoonright)$, and thus
 the following diagram commutes up to homotopy.
$$
\begin{tikzcd}
        M(\A^{\upharpoonright}) \ar[hook]{r} & M((\A')^{\upharpoonright})\\
        \Sal(\A^{\upharpoonright}) \ar[hook]{u} \ar["s"]{r} & \Sal((\A')^{\upharpoonright}) \ar[hook]{u} 
\end{tikzcd}
$$
Passing to the torus means considering the quotient by the action of the group of translations $\mathbb Z^d\subseteq V$ - call $q: V\to T$ this map. Since $s$ commutes with $q$, it descends to a cellular map 
$$\Psi: \Sal(\A) \to \Sal(\A').$$
 Since the inclusion $\xi$ (and hence $\xi'$) can be chosen equivariantly, the image under $q$ of the above diagram is a commutative diagram
$$
\begin{tikzcd}
        M(\A) \ar[hook]{r} & M(\A')\\
        \Sal(\A) \ar[hook]{u} \ar["\Psi"]{r} & \Sal(\A') \ar[hook]{u}
\end{tikzcd}
$$
Now notice that for every $F\in \Fc(\A)$ the face $\tilde{F}:=q(s(F^{\upharpoonright}))$ of $\A$ is well-defined and independent of the choice of $F^{\upharpoonright}$ in $q^{-1}(F)$. Therefore, as a cellular map between $\Sal(\A)=\vert \int \mathscr D (\A) \vert$ and $\Sal(\A')=\vert \int \mathscr D(\A') \vert$, $\Psi$ is given on vertices as 
\begin{equation}\label{eq:explPsi}\Psi(F,[G,C])=(\tilde{F},[\tilde{G},\tilde{C}]).
\end{equation}
It follows that $\Psi$  restricts to a functor 
$\Sc_{L,F}\to \Sc_{\tilde{L},\tilde{F}}.$
\end{proof}

\begin{Cor}\label{rem:sottoarrangiamento}\strut
Let $\Psi: \gre{\Sc_{L,F}} \to \gre{\Sc'_{\tilde{L},\tilde{F}}}$ be the map defined in the previous lemma.
\begin{itemize}
\item[(1)] If $H$ is such that $H\not\supset \widetilde{L}$, the image $\Psi_\ast (\wg_H)$ vanishes.
\item[(2)] If $H\supseteq \widetilde{L}$, then $\Psi_\ast (\wg_H)=\wg_H$.
\end{itemize}
\end{Cor}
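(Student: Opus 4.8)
The plan is to use the explicit cellular description of $\Psi$ from Equation \eqref{eq:explPsi} together with the naturality of the construction of the generators $\wg_H$, so that everything reduces to tracking faces. Recall that $\wg_H$ is represented, for any morphism $m:F\to G$ in $\Fc(\A^H)$ with $\supp(G)=H$, by the cycle $[\Omega^{(m)}]$, and by Lemma \ref{lem:relations_homology}(i) we are free to take $m=\id_G$, so that $\Omega^{(\id_G)}$ is the square poset on the four vertices $(G,[C_1,C_1]),(G,[G,C_1]),(G,[C_2,C_2]),(G,[G,C_2])$, with $C_1=\newnot{R_H}{G}_{G}$ and $C_2=\newnot{-R_H}{G}_{G}$ the two chambers of $\A[G]$ adjacent to $G$. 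The key point is to compute $\Psi(\Omega^{(\id_G)})$ using $\Psi(F',[G',C'])=(\widetilde{F'},[\widetilde{G'},\widetilde{C'}])$.

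First I would treat case (1). If $H\not\supseteq\widetilde L$, I claim that $\widetilde G$, the smallest face of $\A'$ containing (a lift of) $G$, has support a layer of $\A'$ \emph{not} contained in $H':=$ (any hypertorus of $\A'$ containing that layer); more to the point, the linear flat $X^G_{?}$ relevant to $\Omega^{(\id_G)}$ becomes, after applying $s$ on the universal cover, the full ambient space. Concretely: $G$ has support $H$ in $\A$, but since $H\not\supseteq\widetilde L$ and $\widetilde L$ is the smallest layer of $\A'$ containing $L$, the face $s(G^{\upharpoonright})$ lies in the interior of a chamber of $(\A')^{\upharpoonright}$ with respect to the directions of $\A[G]$; hence $\A'[\widetilde G]$ does not "see" the hyperplane of $H$, the two chambers $C_1,C_2$ adjacent to $G$ both map to the \emph{same} chamber $\widetilde{C_1}=\widetilde{C_2}$, and the edges $[G,C_1],[G,C_2]$ map to the same vertex. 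Therefore $\Psi(\Omega^{(\id_G)})$ degenerates to a single vertex (or at most a contractible subcomplex), so the induced class $\Psi_\ast(\wg_H)=0$ in $H_1$. One must also invoke Remark \ref{rem:FundChoices}-style independence so that the computation does not depend on the chosen representative $G$.

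For case (2), when $H\supseteq\widetilde L$: now $\widetilde L\subseteq H$, so among the hypertori of $\A$ containing $\widetilde L$ we have $H$ itself, and the corresponding hyperplane survives in $\A'$ — more precisely there is a hypertorus $H'\in\A'$ with $H'\cap(\text{nbhd})=H\cap(\text{nbhd})$, and since $\A$ and $\A'$ have the same rank and $H\in\A$, in fact $H\in\A'$. Then choosing the representative $G$ with $\supp(G)=H$ and a lift $G^{\upharpoonright}$, the face $s(G^{\upharpoonright})$ still has support (in $(\A')^\upharpoonright$) equal to the lift of $H$, the two adjacent chambers $C_1,C_2$ map to the two distinct chambers of $\A'[\widetilde G]$ adjacent to $\widetilde G$, and the square $\Omega^{(\id_G)}$ maps isomorphically onto $\Omega^{(\id_{\widetilde G})}$ computing $\wg_H$ in $\Sal(\A')$ — here one checks the orientations match, using the chosen $R_H$ in $\A'$ compatibly (as in the normalization in Lemma \ref{lem:Phi}(2)). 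Hence $\Psi_\ast(\wg_H)=\wg_H$.

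The main obstacle is the bookkeeping in case (2): verifying that $s$ sends the local arrangement $\A[G]$ to (the essentialisation of a subarrangement of) $\A'[\widetilde G]$ in a way that carries the two $G$-adjacent chambers to the two $\widetilde G$-adjacent chambers \emph{with the correct signs}, so that the oriented cycle $[\Omega^{(\id_G)}]$ maps to $[\Omega^{(\id_{\widetilde G})}]$ and not its negative. This is exactly the sign-vector chase analogous to the one in the proof of Lemma \ref{lem:Phi}(2); I would handle it by choosing the auxiliary chamber $R_H$ for $\A'$ to be (the image under $s$ of) the one used for $\A$, so that $\epsilon(H,\,\cdot\,)$ is preserved and the coefficient normalization in Definition \ref{df:Om} is respected on the nose.
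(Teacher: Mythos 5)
Your proposal is correct and follows essentially the same route as the paper: pick $\Omega^{(\id_G)}$ with $\supp(G)=H$, push it through the explicit cellular formula $\Psi(F,[G,C])=(\widetilde F,[\widetilde G,\widetilde C])$, and observe that in case (1) the image degenerates (the paper notes more sharply that $\A'[\widetilde G]$ is in fact empty, so the image is a single vertex, but your weaker observation that the $H$-hyperplane is not seen already forces the image chain to vanish), while in case (2) a compatible choice of auxiliary chambers $R_H$ makes $\Psi(\Omega^{(\id_G)})=\Omega^{(\id_{\widetilde G})}$ with matching orientation. The only cosmetic quibble is your parenthetical "since $\A$ and $\A'$ have the same rank\ldots\ $H\in\A'$," which is not literally a consequence of the rank hypothesis but is anyway implicit in the statement of (2) for $\wg_H$ to make sense in $H_1(\Sc'_{\widetilde L,\widetilde F})$.
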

\begin{proof}
In order to prove item (1), notice that we can choose a representative of the homology class $\wg_H$ to be $\Omega^{id_G}\subseteq \Sc_{L,F}$ for some face $G$ of $\A$ supported solely on $H$. 
Now, in this case $\A'[\widetilde{G}]$ is the empty arrangement with a unique face $K$. Now applying Equation \eqref{eq:explPsi} to the explicit expression of $\Omega^{id_G}$ given in Remark \ref{rem:OmegaId}, the   image $\Psi(\Omega^{id_G})$ is the single vertex $(\widetilde{G},[K,K])$.

For item (2), let $i:\A' \into \A$ be the inclusion map and assume that for $H \in \A'$ we choose $R_{i(H)} \in \Tc(\A_0)$ as the smallest chamber in $\Tc(\A_0)$ that contains $R_H \in \Tc(\A'_0)$. 
Then if $H\supseteq \widetilde{L}$
with Equation \eqref{eq:explPsi} one checks that  $\Psi(\Omega^{id_G})=\Omega^{id_{\widetilde{G}}}$, thus $\Psi_\ast (\wg_H)=\wg_H$.
\end{proof}

%
%


\section{Presentation of the cohomology ring}

\subsection{Choices for a presentation}
Let $\A$ be an essential  toric arrangement in a torus $T$ of dimension $d$.
In order to provide a presentation of the cohomology ring $H^*(M(\A);\Q)$ from the combinatorial data we need to make some choices.

\begin{choice} \label{choice_BL}
For every layer $L \in \Cc$ we choose a chamber $B(L) \in \Tc(\A_0)$ such that the intersection $\overline{B}(L) \cap X_L$ has maximal dimension  and we set $$F(L) :=\overline{B}(L) \cap X_L \in \Fc(\A_0).$$ Hence we will simply write $\Sc_{L}$ for $\Sc_{L,F(L)}$.

\end{choice}

\begin{df}\label{def_varphi_L}
For any layer $L$ let  $\incl_L: \gre{\Sc_{L,F(L)}} \to \Sal(\A)$ be the inclusion map induced by the natural transformation between the diagrams $\mathscr D_{L,F(L)}$ and $\mathscr D$.
\end{df}

Notice that the face $F(T)$ is a chamber in $\T(\A_0)$. When the setting of the toric arrangement $\A$ is understood we will write $\vg^M$ for $\vg^M_{F(T)}$ and $\wl^M$ for $\wl^M_{F(T)}$. It follows from Remark \ref{rem:contenimenti} that we have $\vg^M \subset \Sc_T= \Sc_{T,F(T)}$. Hence $\wl^M$ is a homology class in $H_1(\Sc_T;\Q)$.

\begin{choice}\label{ch:base}
Once and for all we choose elements $M_1, \ldots, M_d \in \Cc_{d-1}$ such that 
$$
\widehat{\mathcal{B}}_T(\A) := \{ \wl^{M_1}, \ldots, \wl^{M_d} \}
$$
is a basis of  $H_1(\Sc_T;\Q)$.

\end{choice}

This can be done since the arrangement is essential and hence the $\Q$-span of the defining characters is $\Hom(T,\CCC^*)\otimes \Q \simeq H^1(\Sc_T;\Q).$ 
By duality the set of $1$-dimensional layers $\Cc_{d-1}$ generate $H_1(T;\Q)$. 
Moreover we have that the projection $\gre{\Sc_T} \to T_c$, which is a homeomorphism, maps $\wl^M \mapsto M$.
Hence the set $\{\wl^M \mid M \in \Cc_{d-1} \}$ generates $H_1(\Sc_T;\Q)$. 

\begin{df} We define the following set:
	$$
	\widehat{\mathcal{B}}(\A) := 
	\{ \wl^{M_1}, \ldots, \wl^{M_d} \} \cup \{\wg_{H} \mid H \in \A\} 
	$$
\end{df}
\begin{Lem} The set $\widehat{\mathcal{B}}(\A)$ 
is a basis of $H_1(\Sal(\A);\Q)$. 
\end{Lem}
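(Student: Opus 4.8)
The plan is to compute the first Betti number of $\Sal(\A)$ and match it with the cardinality of $\widehat{\mathcal{B}}(\A)$, and separately to show the set spans $H_1$; linear independence then follows by a dimension count. First I would recall that, since $\Sal(\A)\simeq M(\A)$, the rational homology $H_1(M(\A);\Q)$ is what we need. There is a fibration-type decomposition (or a Leray/Mayer--Vietoris argument on the toric Salvetti complex) coming from the universal cover $q\colon M(\A^{\upharpoonright})\to M(\A)$, or more directly from the map $M(\A)\to T$: the complement $M(\A)$ fibers over the torus $T$ with fiber the complement of the linear arrangement $\A_0$ (or rather, one uses that $H^1(M(\A);\Q)$ is spanned by the pullbacks of $H^1(T;\Q)$ together with the logarithmic forms $\omega_H$ around each $H\in\A$). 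By the results of De Concini--Procesi and their successors cited in the introduction (e.g. \cite{a5}), $\dim_\Q H^1(M(\A);\Q) = d + \#\A$, with $d$ classes coming from the torus $H^1(T;\Q)\cong\Q^d$ and one class $\omega_H$ for each of the $\#\A$ hypertori — there are no relations in degree one. Hence $\dim_\Q H_1(\Sal(\A);\Q) = d + \#\A = \#\widehat{\mathcal{B}}(\A)$, the last equality because Choice \ref{ch:base} picks exactly $d$ of the $\wl^{M_i}$ and there is one $\wg_H$ per $H\in\A$.

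Next I would show $\widehat{\mathcal{B}}(\A)$ generates $H_1(\Sal(\A);\Q)$. The inclusion $\gre{\Sc_T}=\gre{\Sc_{T,F(T)}}\hookrightarrow\Sal(\A)$ composed with the projection to the compact torus $T_c$ is a homeomorphism onto $T_c$ (as noted in the text just after Choice \ref{ch:base}), and under it $\wl^{M}\mapsto M$; therefore the classes $\wl^{M_1},\dots,\wl^{M_d}$ map to a basis of $H_1(T_c;\Q)\cong\Q^d$, i.e. they span the "torus part" of $H_1(\Sal(\A);\Q)$. The remaining $\#\A$ dimensions are accounted for by the loop classes $\wg_H$: each $\wg_H$ is, by Definition \ref{def:omegahat}, a small loop around the hypertorus $H$, and its image under the intersection pairing with the dual cohomology class $\omega_H$ is $1$ (linking number), while it pairs trivially with the pullbacks from $T_c$; so the $\wg_H$ map onto a complement of the torus part. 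Combining, the span of $\widehat{\mathcal{B}}(\A)$ surjects onto $H_1(\Sal(\A);\Q)$. Since a spanning set of size equal to the dimension is a basis, we are done.

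Alternatively — and this is probably the cleaner route to write up — one avoids quoting the De Concini--Procesi computation and instead argues directly with the filtration of $\Sal(\A)$ by the subcomplexes $\gre{\Sc_{L}}$. Indeed $\gre{\Sc_T}$ contributes the torus part $\Q^d$ via the homeomorphism with $T_c$, and for each codimension-$1$ layer $H\in\A$ one attaches the class $\wg_H$; Remark \ref{rem:contenimenti}, Proposition \ref{prop:cambioL} and Corollary \ref{cor:wg} show these classes are well-defined and interact correctly, and an argument analogous to the one in \cite{caldel17} (Mayer--Vietoris over the poset of layers, using that $\gre{\Sc_{L,F}}\simeq |\Fc(\A^L)|\times\Sal(\A[L])$ has cohomology generated in degree one by Remark \ref{rem:generatiindimensioneuno}) identifies $H_1$ with $\Q^d\oplus\Q^{\A}$. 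I would then verify independence by exhibiting the dual functionals: integration of $dz_j/z_j$-type forms on $T$ detects the $\wl^{M_i}$ against a basis, and the linking/winding number around $H$ detects $\wg_H$ while vanishing on the torus classes and on $\wg_{H'}$ for $H'\neq H$.

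The main obstacle I expect is making the direct spanning/independence argument fully rigorous without simply invoking the known degree-one cohomology computation: one must control the Mayer--Vietoris (or spectral sequence) over the poset $\Cc$ of layers carefully enough to see there are no extra relations among the $\wg_H$ in $H_1$ — equivalently, that $H_2$ does not interfere. The cleanest resolution is to cite \cite{a5} (or De Concini--Procesi) for the value $\dim_\Q H^1(M(\A);\Q)=d+\#\A$, reducing the lemma to the spanning statement plus the dimension count, both of which follow from the homeomorphism $\gre{\Sc_T}\cong T_c$ and the linking-number pairing with the $\omega_H$. I would write the proof in that order: first cite the dimension, then prove spanning via $\gre{\Sc_T}\hookrightarrow\Sal(\A)$ and the $\wg_H$, then conclude.
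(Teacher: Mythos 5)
Your argument follows essentially the same structure as the paper's: establish that the classes in $\widehat{\mathcal{B}}(\A)$ are linearly independent, then match against the known first Betti number $\rk(T)+|\A|$ of $M(\A)$, cited from the literature (the paper cites \cite{dp2005}; you cite \cite{a5} and De Concini--Procesi, which is the same computation). The one substantive variation is the detecting functional for the $\wg_H$: you propose pairing with ``the dual cohomology class $\omega_H$'' and linking numbers, whereas the paper pushes forward along the quotient maps $\Phi_{H'}:\Sal(\A)\to\Sal(\overline{\A}_{H'})$ and uses Lemma \ref{lem:Phi} to see that $({\Phi_{H'}})_*(\wg_H)$ is nonzero iff $H'=H$, which is the purely combinatorial stand-in for the same pairing. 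Be careful with the $\omega_H$ phrasing: in this paper $\omega_H$ is \emph{defined} (Definition \ref{firsT}) as the dual basis element to $\widehat{\mathcal{B}}(\A)$, so invoking it to prove $\widehat{\mathcal{B}}(\A)$ is a basis would be circular; your parenthetical ``(linking number)'' and the logarithmic $d\chi/\chi$-type forms in your alternative route are the correct non-circular substitutes. A smaller remark: the middle paragraph is announced as proving that $\widehat{\mathcal{B}}(\A)$ \emph{generates}, but what it actually shows (via the $T_c$-projection and the $\wg_H$-detectors) is linear independence; the logic still closes because independence together with the dimension count yields a basis, but the labeling is misleading.
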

\begin{proof} Under the natural map $\Sal(\A)\to T_c$ (see \cite[Remark~\olc{4.1.6}]{caldel17}) each of the classes $\wg_H$ maps to a trivial homology class in $T_c$ (in fact, all vertices of $\gre{\Omega^{(id_G)}_H}$ map to the same vertex of the cellularization of $T_c$,  see Remark \ref{rem:OmegaId}).
Moreover, since for each $H$ we have that ${\Phi_H}_*(\wg_H)$ is a non-trivial class in $H_1(\Sal(\overline{\A}_H);\Q)$ while the cycle ${\Phi_{H'}}_*(\wg_{H})$ is trivial in $H_1(\Sal(\overline{\A}_{H'});\Q)$ for $H' \neq H$ (see Lemma \ref{lem:Phi}), we have that the classes $\wg_H$ are linearly independent. Finally, from the computation of Betti numbers of $M(\A)$ carried out, e.g., in \cite{dp2005}, we have the equality $$\rk ( H_1(\Sal(\A);\Q) )= \rk (T ) + |\A|.$$ This implies that the set $\widehat{\mathcal{B}}(\A)$ is a basis of $H_1(\Sal(\A);\Q)$. 
\end{proof}

\begin{df}

Now we can define the set of classes 
$$
\mathcal{B}_T(\A) := \{ \lambda^{M_1}(T), \ldots, \lambda^{M_d}(T)
\}
$$
as the basis of $H^1(\Sc_T;\Q)$ that is dual to $\widehat{\mathcal{B}}_T(\A)$. 

\end{df}

\begin{Rem}
	 Recall that there is a natural projection on the compact torus $\Sal(\A) \to T_c$  that induces by restriction maps $\gre{\Sc_L} \to T_c$ (see \cite[Remark~\olc{4.1.6} and Lemma \olc{4.2.11}]{caldel17}).
	 In particular there is a natural isomorphism between $H^*(\Sc_T;\Q)$ and $H^*(T_c;\Q)$ induced by the maps  $\gre{\Sc_T} \to T_c$.
	 Using the inclusions 
	 $$\gre{\Sc_T} \into \Sal(\A) \to T_c$$ we can identify $H^*(T_c;\Q) \simeq H^*(\Sc_T;\Q)$ with a sub-algebra of $H^*(\Sal(\A);\Q)$.
\end{Rem}
\begin{df}\label{firsT}
For $i = 1, \ldots, d$ we will write $\lambda^{M_i}$ for the cohomology class in $H^1(\Sal(\A);\Q)$ that is the restriction of the class $\lambda^{M_i}(T) \in    H^1(\Sc_T;\Q) \simeq H^1(T_c;\Q).$ 
Hence we can define the classes $\omega_H$, for $H \in \A$, such that the set
$$
\mathcal{B}(\A) := \{ \lambda^{M_1}, \ldots, \lambda^{M_d} \} \cup \{\omega_{H}  \mid H \in \A\}
$$
is the basis of $H^1(M(\A);\Q)$ that is dual to $\widehat{\mathcal{B}}(\A)$.


\end{df}

\begin{df}\label{df:ideale_I}
We will write $\ideal \in H^*(M(\A);\Q)$ for the ideal of $H^*(M(\A);\Q)$ generated by the classes $\lambda^{M_1}, \ldots, \lambda^{M_d}$. 
\end{df}
Notice that, since $\ideal$ is generated by the restriction of the classes in $H^1(T_c;\Q)$ to $H^*(M(\A);\Q)$, the definition of the ideal $\ideal$ depends on the choice of the chamber $B(T)$, but not on the choice of the layers $M_1, \ldots, M_d$.

\begin{choice}
For every layer $L \in \Cc$ of rank $k>0$   we choose elements  $N_1(L), \ldots$, $N_k(L) \in \Cc_{d-1}$ contained in $L$ and such that the cycles
$$
\{ \wl^{N_1(L)}_{B(L)}, \ldots, \wl^{N_k(L)}_{B(L)} \}
$$
are linearly independent in $H_1(\Sc_{L,F(L)};\Q)$.
\end{choice}
Consider now the set 
$$\A_L=\{H_{i_1}, \ldots, H_{i_s}\} \subseteq \A$$
of all hypertori $H_{i_j}$ such that $L \subset H_{i_j}$. We can generalize the notation of Choice \ref{ch:base} and extend Definition \ref{firsT} as follows.
\begin{df} Given any $L\in \Cc$, let
$$
\widehat{\mathcal{B}}_L(\A) := \{ \wl^{N_1(L)}_{B(L)}, \ldots, \wl^{N_k(L)}_{B(L)} \} \cup \{ \wg_{H_{i_1}}, \ldots, \wg_{H_{i_s}}\}.
$$
\end{df}
Clearly the set $\widehat{\mathcal{B}}_L(\A)$ is a basis of  $H_1(\Sc_{L,F(L)};\Q)$.
\begin{df} \label{df:b_cappuccio_L}
We write 
$$
\mathcal{B}_L(\A) = \{ \lambda^{N_1(L)}_{B(L)}, \ldots, \lambda^{N_k(L)}_{B(L)} \} \cup \{ \omega_{H_{i_1}}(L), \ldots, \omega_{H_{i_s}}(L)\}
$$
for the basis of  $H^1(\Sc_{L,F(L)};\Q)$ that is dual to $\widehat{\mathcal{B}}_L(\A)$.
\end{df}

\begin{df}\label{df:ideale_I_L}
	We define $\ideal(L) \subset H^*(\Sc_{L,F(L)};\Q)$ as the ideal of $H^*(\Sc_{L,F(L)};\Q)$ generated by the classes 
	$ \lambda^{N_1(L)}_{B(L)}, \ldots, \lambda^{N_k(L)}_{B(L)} $.
\end{df}

\subsection{Ideals and cohomology maps}


\begin{Lem}\label{lem:iota_cohom}

The induced restriction homomorphism in cohomology 
$
\incl_L^*:H^1(\Sal(\A);\Q) \to H^1(\Sc_{L,F(L)};\Q)
$
maps as follows:
\begin{equation}\label{eq:phi_star_lambda}
	\incl_L^*: \lambda^{M_i} \mapsto \sum a_{hi} \lambda_{B(L)}^{N_h(L)}
\end{equation}
where the coefficients $a_{hi}$ are given by the relation $\wl_{F(T)}^{N_h} = \sum a_{hi} \wl_{F(T)}^{M_i}$.

If $L \subset H$ we have
\begin{equation} \label{eq:phi_star_omega}
\incl_L^*: \omega_{H} \mapsto
\omega_{H}(L) +
\sum_{h=1}^k
\sum_{\substack{
		F\subseteq N_h(L) \\ H \in S_F(B(L),F(T))\setminus \A_{N_h(L)} 
}} \!\!\! \epsilon(H,B) \lambda_{B(L)}^{N_h(L)}; 
\end{equation}
while if $L \nsubset H$ we have
\begin{equation}\label{eq:phi_star_omega_prime}
\incl_L^*: \omega_{H} \mapsto
\sum_{h=1}^k
\sum_{\substack{
		F\subseteq N_h(L) \\ H \in S_F(B(L),F(T))\setminus \A_{N_h(L)} 
}} \!\!\! \epsilon(H,B) \lambda_{B(L)}^{N_h(L)}.
\end{equation}
\end{Lem}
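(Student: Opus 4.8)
The plan is to reduce the statement to a computation in homology. Since all the (co)homology groups involved are finite-dimensional $\Q$-vector spaces, $H^1(-;\Q)$ is canonically dual to $H_1(-;\Q)$, and $\incl_L^*$ is the transpose of the induced map $(\incl_L)_*\colon H_1(\gre{\Sc_{L,F(L)}};\Q)\to H_1(\Sal(\A);\Q)$. Because $\mathcal{B}_L(\A)$ is the basis dual to $\widehat{\mathcal{B}}_L(\A)$, for $\xi\in H^1(\Sal(\A);\Q)$ one has $\incl_L^*(\xi)=\sum_{h}\langle\xi,(\incl_L)_*\wl^{N_h(L)}_{B(L)}\rangle\,\lambda^{N_h(L)}_{B(L)}+\sum_{j}\langle\xi,(\incl_L)_*\wg_{H_{i_j}}\rangle\,\omega_{H_{i_j}}(L)$. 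So I would first compute $(\incl_L)_*$ on the elements of $\widehat{\mathcal{B}}_L(\A)$, express the results in the basis $\widehat{\mathcal{B}}(\A)$, and then obtain \eqref{eq:phi_star_lambda}, \eqref{eq:phi_star_omega} and \eqref{eq:phi_star_omega_prime} by pairing against $\lambda^{M_i}$ and $\omega_H$, using the relations $\langle\lambda^{M_i},\wl^{M_{i'}}\rangle=\delta_{ii'}$, $\langle\lambda^{M_i},\wg_H\rangle=0$, $\langle\omega_H,\wl^{M_i}\rangle=0$ and $\langle\omega_H,\wg_{H'}\rangle=\delta_{HH'}$ that encode the duality of $\mathcal{B}(\A)$ and $\widehat{\mathcal{B}}(\A)$.

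For the homology computation the key point is that the two families of generators of $H_1(\gre{\Sc_{L,F(L)}};\Q)$ have cycle representatives that are honest subcomplexes of $\gre{\Sc_{L,F(L)}}$: the square $[\Omega^{(id_G)}]$ for any $G$ with $\supp(G)=H_{i_j}$ (note that $L\subset H_{i_j}$ since $H_{i_j}\in\A_L$), and the loop $[\vg^{N_h(L)}_{B(L)}]$, which lies in $\gre{\Sc_{L,F(L)}}$ by Remark \ref{rem:contenimenti} applied with $B=B(L)$ and $F_0=F(L)=\overline{B}(L)\cap X_L$. Since $\incl_L$ is the inclusion induced by the inclusion of the subdiagram $\mathscr D_{L,F(L)}$ into $\mathscr D$ (Definition \ref{def_varphi_L}), it is the identity on these subcomplexes and on their chosen orientations, so $(\incl_L)_*\wg_{H_{i_j}}=\wg_{H_{i_j}}$ and $(\incl_L)_*\wl^{N_h(L)}_{B(L)}=\wl^{N_h(L)}_{B(L)}$ as classes in $H_1(\Sal(\A);\Q)$. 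It remains to rewrite the latter in the basis $\widehat{\mathcal{B}}(\A)$: I would apply Proposition \ref{prop:cambioL} with $B=B(L)$ and $B'=F(T)$ to get $\wl^{N_h(L)}_{B(L)}=\wl^{N_h}+\sum_{\substack{F\subseteq N_h(L)\\ H\in S_F(B(L),F(T))\setminus\A_{N_h(L)}}}\epsilon(H,B(L))\,\wg_H$, and then substitute $\wl^{N_h}=\sum_i a_{hi}\,\wl^{M_i}$, which is the relation defining the coefficients $a_{hi}$ (and is legitimate because $\widehat{\mathcal{B}}_T(\A)$ is a basis of $H_1(\Sc_T;\Q)$ containing $\wl^{N_h}$).

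With this expansion the three formulas follow. Pairing against $\lambda^{M_i}$ annihilates every $\wg_H$-term and leaves the coefficient $a_{hi}$, which gives \eqref{eq:phi_star_lambda}. Pairing against $\omega_H$, only the $\wg_H$-summands of the expansion of $(\incl_L)_*\wl^{N_h(L)}_{B(L)}$ contribute, and as $\epsilon(H,B(L))$ does not depend on $F$ the coefficient of $\lambda^{N_h(L)}_{B(L)}$ is $\sum_{\substack{F\subseteq N_h(L)\\ H\in S_F(B(L),F(T))\setminus\A_{N_h(L)}}}\epsilon(H,B(L))$; moreover, when $L\subset H$ we have $H=H_{i_{j_0}}$ for a unique $j_0$ and the pairing $\langle\omega_H,\wg_{H_{i_j}}\rangle=\delta_{jj_0}$ produces the additional term $\omega_H(L)$, giving \eqref{eq:phi_star_omega}, whereas if $L\not\subset H$ then $H\notin\A_L$, all the pairings $\langle\omega_H,\wg_{H_{i_j}}\rangle$ vanish, and only the first sum survives, giving \eqref{eq:phi_star_omega_prime}. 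I expect the only genuinely delicate step to be the compatibility claim used in the second paragraph: that $\gre{\Omega^{(id_G)}}$ and $\gre{\vg^{N_h(L)}_{B(L)}}$ really sit inside $\gre{\Sc_{L,F(L)}}$ as $\incl_L$-preimages of the corresponding subcomplexes of $\Sal(\A)$, with matching orientations. This reduces to unwinding the sign-vector bookkeeping behind Remark \ref{rem:contenimenti} and the definition of $\mathscr D_{L,F(L)}$, and is routine once written out — but it is where the care is needed.
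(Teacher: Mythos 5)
Your proposal is correct and follows essentially the same route as the paper: both arguments reduce the computation of $\incl_L^*$ to evaluating pairings against the basis $\widehat{\mathcal{B}}_L(\A)$ of $H_1(\Sc_{L,F(L)};\Q)$, both use that $(\incl_L)_*$ acts as the identity on the cycle representatives $\wg_{H_{i_j}}$ and $\wl^{N_h(L)}_{B(L)}$ (since they are subcomplexes of $\gre{\Sc_{L,F(L)}}$), and both invoke Proposition \ref{prop:cambioL} to rewrite $\wl^{N_h(L)}_{B(L)}$ as $\wl^{N_h}_{F(T)}$ plus a $\Z$-linear combination of $\wg_H$'s before pairing with $\lambda^{M_i}$ and $\omega_H$ via the duality relations. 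The only cosmetic difference is that you phrase the final step explicitly as computing the transpose of $(\incl_L)_*$, whereas the paper pairs both sides of the claimed identities against a homology basis — these are the same computation.
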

\begin{proof}
In this proof we use the symbol $\int$ for the natural pairing between homology and cohomology.

Using the homological basis $\mathcal{B}_L(\A)$ of 	 $H_1(\Sc_{L,F(L)};\Q)$ we can compute
\begin{align*}
\int_{\wl_{B(L)}^{N_h}} \incl_L^*(\lambda^{M_i})& = \int_{\wl_{B(L)}^{N_h}} \lambda^{M_i} =\\
& = \int_{\wl_{F(T)}^{N_h}} \lambda^{M_i}  + \sum_{\cdots} \epsilon_H \int_{\wg_{H}}  \lambda^{M_i} = \\
& = \int_{\wl_{F(T)}^{N_h}} \lambda^{M_i} = \\
& = \int_{\sum a_{hj} \wl_{F(T)}^{M_j}} \lambda^{M_i} = a_{hi}.
\end{align*}
where the second equality follows from Equation \eqref{eq:lambda_cap}. Notice that we don't need to specify the range of the sum and the coefficients of the integrals over the terms $\wg_{H}$ since they are not significant in the computation.
In the same way we have:
\begin{align*}
\int_{\wg_H} \incl_L^*(\lambda^{M_i})& = \int_{\wg_H} \lambda^{M_i} = 0.
\end{align*}
These computations shows that pairing the two sides of Equation \eqref{eq:phi_star_lambda} against every element of a basis of 
$H_1(\Sc_{L,F(L)};\Q)$
gives the same result. Hence Equation \eqref{eq:phi_star_lambda} holds.

The analogous computation for $\omega_{H}$ gives:
\begin{align*}
\int_{\wl_{B(L)}^{N_h}} \incl_L^*(\omega_{H})& = \int_{\wl_{B(L)}^{N_h}} \omega_{H} =\\
& = \int_{\wl_{F(T)}^{N_h}} \omega_{H}  + \sum_{\substack{
		F\subseteq N_k \\ H'\in S_F(B(L),F(T))\setminus \A_{N_h}
}} \epsilon(H,B(L))\int_{\wg_{H'}}  \omega_{H} = \\
& =  \sum_{\substack{
		F\subseteq N_k \\ H'\in S_F(B(L),F(T))\setminus \A_{N_h}
}} \epsilon(H,B(L)) \delta_{H,H'}.
\end{align*}
and
\begin{align*}
\int_{\wg_{H'}} \incl_L^*(\omega_H)& = \int_{\wg_{H'}} \wg_H = \delta_{H,H'}.
\end{align*}
where $H' \in \A_{L}$ and hence there is a non-zero pairing if and only if $L \subset H$. This implies Equations \eqref{eq:phi_star_omega} and \eqref{eq:phi_star_omega_prime} via pairing of both sides with a basis of $H_1(\Sc_{L,F(L)};\Q)$.
\end{proof}

Recall Definitions \ref{df:ideale_I} and \ref{df:ideale_I_L} for $\ideal$ and $\ideal(L)$. 
The following result is a straightforward consequence of the formulas of Lemma \ref{lem:iota_cohom}.
\begin{Cor}\label{cor:phi_ideali}
The homomorphism $\incl_L^*$ maps $\incl_L^*(\ideal) \subset \ideal(L)$.
\end{Cor}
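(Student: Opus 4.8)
The plan is to trace through the formulas of Lemma~\ref{lem:iota_cohom} and verify that the generators of $\ideal$ land inside $\ideal(L)$; since $\incl_L^*$ is a ring homomorphism, this suffices to conclude $\incl_L^*(\ideal)\subseteq \ideal(L)$. Recall that $\ideal$ is generated by the classes $\lambda^{M_1},\dots,\lambda^{M_d}$ and $\ideal(L)$ is generated by $\lambda^{N_1(L)}_{B(L)},\dots,\lambda^{N_k(L)}_{B(L)}$.

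First I would recall, from a general principle, that if $\psi\colon A\to B$ is a homomorphism of commutative rings and $\ideal=\langle x_1,\dots,x_n\rangle\subseteq A$, then $\psi(\ideal)$ is contained in the ideal of $B$ generated by $\psi(x_1),\dots,\psi(x_n)$: indeed any element of $\ideal$ is $\sum a_i x_i$ with $a_i\in A$, and $\psi(\sum a_i x_i)=\sum \psi(a_i)\psi(x_i)$ lies in $\langle \psi(x_1),\dots,\psi(x_n)\rangle$. So it is enough to check that each $\incl_L^*(\lambda^{M_i})$ lies in $\ideal(L)$.

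Next I would invoke Equation~\eqref{eq:phi_star_lambda} of Lemma~\ref{lem:iota_cohom}, which states precisely that
$$
\incl_L^*(\lambda^{M_i}) = \sum_{h} a_{hi}\,\lambda_{B(L)}^{N_h(L)}.
$$
Each $\lambda_{B(L)}^{N_h(L)}$ is one of the generators of $\ideal(L)$ (Definition~\ref{df:ideale_I_L}), and the $a_{hi}$ are scalars in $\Q$, so the right-hand side is a $\Q$-linear combination of the generators of $\ideal(L)$ and hence lies in $\ideal(L)$. Applying the first paragraph's principle with $\psi=\incl_L^*$, $A=H^*(\Sal(\A);\Q)$, $B=H^*(\Sc_{L,F(L)};\Q)$ then gives $\incl_L^*(\ideal)\subseteq \ideal(L)$.

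The proof is essentially immediate once Lemma~\ref{lem:iota_cohom} is in hand, so there is no real obstacle; the only point requiring a word of care is that $\incl_L^*$ is a ring (not just group) homomorphism so that it carries the ideal generated by a set into the ideal generated by the images—this is automatic since $\incl_L$ is a continuous map and induced maps in cohomology are ring homomorphisms. One could alternatively observe that Equations~\eqref{eq:phi_star_omega} and~\eqref{eq:phi_star_omega_prime} show the images of the remaining degree-one generators $\omega_H$ differ from elements of $\ideal(L)$ by the classes $\omega_H(L)$, but this is not needed for the stated corollary.
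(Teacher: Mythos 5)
Your proof is correct and matches the paper's intent: the paper dismisses this as "a straightforward consequence of the formulas of Lemma \ref{lem:iota_cohom}," and your argument—using that $\incl_L^*$ is a ring homomorphism together with Equation \eqref{eq:phi_star_lambda}—is exactly the intended one.
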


\begin{Cor}\label{cor:nullproduct}
Let $\mu \in H^s(\Sal(\A);\Q)$ be the restriction of the class $\mu_T \in H^s(T_c;\Q)$. Then $\incl_L^*(\mu)=0$ if $\rk(L)>d-s$.
\end{Cor}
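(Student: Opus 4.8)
The plan is to exhibit $\incl_L^*(\mu)$ as the pullback of a cohomology class along a map that factors through a CW complex of dimension $d-\rk(L)$, so that it is forced to vanish in degree $s>d-\rk(L)$. By definition of $\mu$ we have $\mu=\vartheta^*(\mu_T)$, where $\vartheta\colon\Sal(\A)\to T_c$ is the natural projection onto the compact torus recalled before Definition \ref{firsT}; hence, by functoriality of pullback, $\incl_L^*(\mu)=(\vartheta\circ\incl_L)^*(\mu_T)$. So it suffices to prove that the composite
$$p_L\colon \gre{\Sc_{L,F(L)}}\xrightarrow{\ \incl_L\ }\Sal(\A)\xrightarrow{\ \vartheta\ }T_c$$
factors through a CW complex of dimension $d-\rk(L)$.

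To obtain the factorization I would unwind the combinatorial models. Since $\Sal(\A)=\gre{\int\mathscr D}$ and $\vartheta$ is induced (see \cite[Remark~\olc{4.1.6}]{caldel17}) by the projection functor $\int\mathscr D\to\Fc(\A)^{\op}$ forgetting the Salvetti coordinate, under the identification $\gre{\Fc(\A)^{\op}}=\gre{\Fc(\A)}=T_c$, and since $\incl_L$ is the realization of the inclusion $\int\mathscr D_{L,F(L)}\hookrightarrow\int\mathscr D$ over the inclusion of index categories $\Fc(\A^L)\hookrightarrow\Fc(\A)$ (Definition \ref{def_varphi_L}), the map $p_L$ factors as
$$\gre{\Sc_{L,F(L)}}\xrightarrow{\ \pi\ }\gre{\Fc(\A^L)}\hookrightarrow\gre{\Fc(\A)}=T_c,$$
with $\pi$ the projection of the Grothendieck construction. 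By Remark \ref{remark_polyhedral} applied to the toric arrangement $\A^L$ in $L$, the space $\gre{\Fc(\A^L)}$ is homeomorphic to a polyhedral cellularization of the compact torus underlying the layer $L$, hence is a CW complex of dimension $\dim L=d-\rk(L)$.

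It then follows that $p_L^*\colon H^s(T_c;\Q)\to H^s(\gre{\Sc_{L,F(L)}};\Q)$ factors through $H^s(\gre{\Fc(\A^L)};\Q)$, which is $0$ as soon as $s>d-\rk(L)$, i.e.\ as soon as $\rk(L)>d-s$. In that range $\incl_L^*(\mu)=p_L^*(\mu_T)=0$, which is the claim.

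I do not anticipate a genuine obstacle here: the one point to be handled with care is the compatibility of the projection $\vartheta$ and of the inclusions $\incl_L$ with the Grothendieck projections $\int\mathscr D\to\Fc(\A)^{\op}$, which is precisely what is recorded in the results of \cite{caldel17} quoted in \S\ref{sec:setup} and \S\ref{sec:cohom}. As an alternative one can argue algebraically: by \eqref{eq:phi_star_lambda} the class $\incl_L^*(\mu)$ lies in the subring of $H^*(\Sc_{L,F(L)};\Q)$ generated by the classes $\lambda^{N_h(L)}_{B(L)}$, and, using the product decomposition of Remark \ref{htprod} and Definition \ref{df:b_cappuccio_L}, these degree-$1$ classes are seen to be pulled back from the torus factor of $\gre{\Sc_{L,F(L)}}\simeq L\times M(\A_\CCC[L])$, whose cohomology vanishes in degrees exceeding $d-\rk(L)$.
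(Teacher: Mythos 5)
Your main argument is correct but proceeds by a different, more geometric route than the paper's. You factor the composite $\gre{\Sc_{L,F(L)}} \xrightarrow{\incl_L} \Sal(\A) \xrightarrow{\vartheta} T_c$ through the Grothendieck-construction projection onto $\gre{\Fc(\A^L)}$, which by Remark \ref{remark_polyhedral} is the compact torus $L_c$ of dimension $d-\rk(L)$, and conclude by a dimension count since $H^s(L_c;\Q)=0$ for $s>d-\rk(L)$. The paper's own proof is instead a one-line algebraic deduction from Lemma \ref{lem:iota_cohom}: Equation \eqref{eq:phi_star_lambda} shows $\incl_L^*$ carries each degree-one factor of $\mu$ into the span of the classes $\lambda^{N_h(L)}_{B(L)}$, and any product of more than $d-\rk(L)$ such classes vanishes in $H^*(\Sc_{L,F(L)};\Q)$ --- which is precisely the ``alternative'' argument you sketch at the end. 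Both are sound; yours makes the geometric source of the vanishing explicit (the relevant classes are pulled back from the $L_c$ factor of the product decomposition of Remark \ref{htprod}), while the paper's trades that transparency for brevity given that Lemma \ref{lem:iota_cohom} is already established. The one point you rightly flag --- compatibility of the Grothendieck projections with the inclusion $\Fc(\A^L)\into\Fc(\A)$ --- is indeed the crux, and it holds by the very construction of $\mathscr D_{L,F(L)}$ over the index subcategory $\Fc(\A^L)$ in Definition \ref{df:subcatD}.
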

\begin{proof}
This follows immediately from Lemma \ref{lem:iota_cohom} since a product of more than $d - \rk(L)$ classes of the type $\lambda_{B(L)}^M$ is zero in $H^*(\Sc_{L,F(L)};\Q)$.
\end{proof}

The results of Lemma \ref{lem:Phi} and Corollary \ref{lem:Phi_bis} have the following consequence in cohomology.

\begin{Lem}\label{lem:Phi_star}
Let $\A$ be a toric arrangement in the torus $T$. Let $L \in \Cc$ be a layer of $\A$. Consider the quotient arrangement $\AL$ in $\overline{T}=T/L_0$ and the cellular map $\Phi_{L}: \Sal(\A) \to \Sal(\AL)$. 
Moreover, assume that $ \pi_L(F(T)) = F(\overline{T})$.
Then for any hypertorus $H \in \A_L$ the cohomology homomorphism 
$$
\Phi_{L}^*: H^1( \Sal(\AL);\Q) \to H^1(\Sal(\A);\Q)
$$
induced by $\Phi_{L}$
maps as follows:
$$
\Phi_{L}^*: \omega_{\pi_L(H)} \mapsto \omega_{H}.
$$
\end{Lem}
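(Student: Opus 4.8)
The plan is to prove the identity by testing both sides against the homology basis $\widehat{\mathcal{B}}(\A)=\{\wl^{M_1},\dots,\wl^{M_d}\}\cup\{\wg_{H'}\mid H'\in\A\}$ of $H_1(\Sal(\A);\Q)$. Indeed, by Definition \ref{firsT} the class $\omega_H$ is characterised as the unique element of $H^1(\Sal(\A);\Q)$ with $\int_{\wg_{H'}}\omega_H=\delta_{H,H'}$ for all $H'\in\A$ and $\int_{\wl^{M_i}}\omega_H=0$ for $i=1,\dots,d$, so it is enough to check that $\Phi_L^{\ast}(\omega_{\pi_L(H)})$ has the same pairings. Throughout I would use the adjunction $\int_{\eta}\Phi_L^{\ast}(\alpha)=\int_{(\Phi_L)_{\ast}\eta}\alpha$ together with the computation of $(\Phi_L)_{\ast}$ on these generators provided by Lemma \ref{lem:Phi} and Corollary \ref{lem:Phi_bis}, and the fact that on the quotient $\omega_{\pi_L(H)}$ is, by construction, the member of the dual basis $\mathcal{B}(\AL)$ of $\widehat{\mathcal{B}}(\AL)$.

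First I would pair with the generators $\wg_{H'}$. Fixing for $\AL$ the compatible choices $R_{\pi_L(H')}:=(R_{H'})_{X^G_L}/L_0$ as in Lemma \ref{lem:Phi}.(2), that lemma gives $(\Phi_L)_{\ast}(\wg_{H'})=\wg_{\pi_L(H')}$ when $H'\supseteq L$ and $(\Phi_L)_{\ast}(\wg_{H'})=0$ otherwise. Hence $\int_{\wg_{H'}}\Phi_L^{\ast}(\omega_{\pi_L(H)})=\int_{\wg_{\pi_L(H')}}\omega_{\pi_L(H)}=\delta_{\pi_L(H),\pi_L(H')}$ for $H'\in\A_L$, and $=0$ for $H'\notin\A_L$. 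It then remains to note that $\pi_L$ is injective on $\A_L$: if $L\subseteq H'$ then $L_0$ lies in the codimension-one subgroup of which $H'$ is a coset, so $H'$ is a union of $L_0$-cosets and is recovered from $H'/L_0$; thus for $H,H'\in\A_L$ one has $\pi_L(H)=\pi_L(H')\iff H=H'$. Since $H\in\A_L$, both cases combine to $\int_{\wg_{H'}}\Phi_L^{\ast}(\omega_{\pi_L(H)})=\delta_{H,H'}$.

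Next I would pair with the toric generators $\wl^{M_i}=\wl^{M_i}_{F(T)}$, which lie in the image of $H_1(\Sc_T;\Q)$ by Remark \ref{rem:contenimenti} and Choice \ref{choice_BL}. By Equation \eqref{eq:formula_phi}, $(\Phi_L)_{\ast}(\wl^{M_i})$ lies in the image of $H_1(\Sc_{\pi_L(T),\pi_L(F(T))};\Q)$; since $\pi_L(T)=\overline{T}$ and, by hypothesis, $\pi_L(F(T))=F(\overline{T})$, this is $H_1(\Sc_{\overline{T}};\Q)$. Under the inclusion $\gre{\Sc_{\overline{T}}}\hookrightarrow\Sal(\AL)$ this homology group is spanned by the classes $\wl^{\overline{M}_1},\dots,\wl^{\overline{M}_{\overline{d}}}$ making up the first block of $\widehat{\mathcal{B}}(\AL)$ (the analogue for $\AL$ of Choice \ref{ch:base}). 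As $\omega_{\pi_L(H)}$ pairs trivially with each $\wl^{\overline{M}_j}$, it annihilates this span, and therefore $\int_{\wl^{M_i}}\Phi_L^{\ast}(\omega_{\pi_L(H)})=\int_{(\Phi_L)_{\ast}\wl^{M_i}}\omega_{\pi_L(H)}=0$. Comparing the two computations with the characterisation of $\omega_H$ recalled above yields $\Phi_L^{\ast}(\omega_{\pi_L(H)})=\omega_H$.

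The individual steps are short, so the real work is keeping the bookkeeping honest. The main points to verify are: that the auxiliary data for $\AL$ (the chambers $R_{\pi_L(H')}$, and the base chamber with $F(\overline{T})=\pi_L(F(T))$) are fixed so that Lemma \ref{lem:Phi} and Corollary \ref{lem:Phi_bis} apply verbatim; that $\pi_L$ really is injective on $\A_L$; and that the image of $H_1(\Sc_{\overline{T}};\Q)$ in $H_1(\Sal(\AL);\Q)$ is exactly the span of the $\wl^{\overline{M}_j}$, i.e.\ that every $1$-dimensional-layer class of $\AL$ is a combination of the chosen basis classes, which is the discussion following Choice \ref{ch:base} applied to $\AL$. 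I expect this last point, identifying the ``toric part'' of $H_1(\Sal(\AL);\Q)$ on which $\omega_{\pi_L(H)}$ vanishes, to be the only place where one must invoke a slightly nontrivial earlier fact rather than a direct definition; everything else is a formal duality computation.
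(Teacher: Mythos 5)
The paper does not supply a proof of Lemma~\ref{lem:Phi_star}; it is stated as a consequence of the preceding material, so there is no ``paper proof'' to compare against. Your argument is the natural one and it is correct: you use the nondegenerate pairing between $H_1(\Sal(\A);\Q)$ and $H^1(\Sal(\A);\Q)$, the adjunction $\int_\eta\Phi_L^*(\alpha)=\int_{(\Phi_L)_*\eta}\alpha$, and the computation of $(\Phi_L)_*$ on the basis $\widehat{\mathcal{B}}(\A)$ furnished by Lemma~\ref{lem:Phi}.(2) and Corollary~\ref{lem:Phi_bis}. Two details you flag yourself deserve emphasis. First, Lemma~\ref{lem:Phi}.(2) only delivers $(\Phi_L)_*(\wg_{H'})=\wg_{\pi_L(H')}$ once the chambers for $\AL$ are chosen as $R_{\pi_L(H')}:=(R_{H'})_{X^G_L}/L_0$; the statement of Lemma~\ref{lem:Phi_star} does not spell this out, but it is implicitly required (otherwise the $\wg_{\pi_L(H')}$ could flip sign), and you make the dependence explicit, which is the right thing to do. Second, the injectivity of $\pi_L$ on $\A_L$ is needed to convert $\delta_{\pi_L(H),\pi_L(H')}$ into $\delta_{H,H'}$; your argument (that each $H'\supseteq L$ has $L_0\subseteq\ker\chi_{H'}$ so $H'$ is recovered as the $q_L$-preimage of $H'/L_0$) is sound. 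Finally, for the $\wl^{M_i}$-pairing, the key point is that, under $\pi_L(F(T))=F(\overline{T})$, Corollary~\ref{lem:Phi_bis} lands $(\Phi_L)_*(\wl^{M_i})$ in the image of $H_1(\Sc_{\overline{T}};\Q)\to H_1(\Sal(\AL);\Q)$, and this image is spanned by the $\wl^{\overline{M}_j}$ because $\widehat{\mathcal{B}}_{\overline{T}}(\AL)$ is a basis of $H_1(\Sc_{\overline{T}};\Q)$ and the $\wl^{\overline{M}_j}\in H_1(\Sal(\AL);\Q)$ are by definition the pushforwards of those basis cycles. Since $\omega_{\pi_L(H)}$ annihilates every $\wl^{\overline{M}_j}$ by the dual-basis property, the pairing vanishes. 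The argument is complete and I see no gap.
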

\begin{Cor}\label{cor:1forme}
Let $\A$ be a toric arrangement in the torus $T$. Let $L \in \Cc$ be a layer of $\A$. Consider the quotient arrangement $\Sal(\AL)$ in $\overline{T}=T/L_0$ and the cellular map $\Phi_{L}: \Sal(\A) \to \Sal(\AL)$. 
Moreover assume that $\pi_L(F(T)) = F(\overline{T})$. 
Then for any layer $Y \in \Cc$, if we consider the cohomology map induced by the restriction
$${\Phi_L}_|:\Sc_{Y,F(Y)} \mapsto \Sc_{\pi_{L}(Y),\pi_L(F(Y))}$$ the following holds:
for any hypertorus $H \in \A$ such that $Y \subset H$ and $L \subset H$ we have
$$
{{\Phi_L}_|}^*(\omega_{\pi_L(H)}(\pi_L(Y)) = \omega_H(Y).
$$
\end{Cor}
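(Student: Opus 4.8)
The plan is to reduce Corollary \ref{cor:1forme} to Lemma \ref{lem:Phi_star} by relating the restriction $\Phi_L$ on the subcomplexes $\gre{\Sc_{Y,F(Y)}}$ and $\gre{\Sc_{\pi_L(Y),\pi_L(F(Y))}}$ to the global map, and then identifying the relevant cohomology classes via their pairings with the homology bases $\widehat{\mathcal B}_Y(\A)$ and $\widehat{\mathcal B}_{\pi_L(Y)}(\AL)$. First I would invoke Lemma \ref{lem:Phi}.(3) to obtain the commutative square relating $\gre{\Sc_{Y,F(Y)}} \hookrightarrow \Sal(\A)$ with $\gre{\Sc_{\pi_L(Y),\pi_L(F(Y))}} \hookrightarrow \Sal(\AL)$ through the two versions of $\Phi_L$; this makes $\incl_{\pi_L(Y)}^* \circ \Phi_L^* = {\Phi_L}_|^* \circ \incl_Y^*$ as maps $H^1(\Sal(\AL);\Q)\to H^1(\Sc_{Y,F(Y)};\Q)$, where I use that $\pi_L(F(Y))$ plays for $\AL$ the role $F$-choice plays for $\A$ (or, if it does not exactly match the canonical choice, that Choice-independence/Remark \ref{rem:FundChoices} lets one identify the resulting cohomology up to the obvious change of basis).

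Next I would chase the class $\omega_{\pi_L(H)}(\pi_L(Y))$ around this square. By definition (Definition \ref{df:b_cappuccio_L}) this class is characterized by its pairings against the basis $\widehat{\mathcal B}_{\pi_L(Y)}(\AL)$, i.e. against the $\wg_{\pi_L(H')}$ for $\pi_L(Y)\subset \pi_L(H')$ and against the $\wl^{N_j(\pi_L(Y))}_{B(\pi_L(Y))}$. Applying ${\Phi_L}_|^*$ and using Lemma \ref{lem:Phi}.(2) — which gives $({\Phi_L})_*(\wg_{H'}) = \wg_{\pi_L(H')}$ when $H'\supseteq L$ and $0$ otherwise — together with Corollary \ref{lem:Phi_bis} (which sends $\Sc_{Y,F(Y)}$-cycles forward to $\Sc_{\pi_L(Y),\pi_L(F(Y))}$-cycles so the $\wl$-classes go to combinations of $\wl$-classes), the pairing $\int_{\wg_{H'}} {\Phi_L}_|^*(\omega_{\pi_L(H)}(\pi_L(Y)))$ equals $\int_{({\Phi_L})_*\wg_{H'}} \omega_{\pi_L(H)}(\pi_L(Y))$, which is $\delta_{H,H'}$ exactly when $H'\supseteq L$, and pairings against the $\wl$-classes likewise match the coefficient structure of $\omega_H(Y)$ coming from Lemma \ref{lem:iota_cohom}. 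Since $H\supseteq L$ and $Y\subset H$ by hypothesis, $\wg_H$ survives, and one reads off that ${\Phi_L}_|^*(\omega_{\pi_L(H)}(\pi_L(Y)))$ pairs to $1$ with $\wg_H$, to $0$ with the other $\wg_{H'}$, and to the prescribed coefficients with the $\wl^{N_j(Y)}_{B(Y)}$; by definition of the dual basis $\mathcal B_Y(\A)$ this forces equality with $\omega_H(Y)$.

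An alternative and perhaps cleaner route is to run the argument one level up: by Lemma \ref{lem:Phi_star} we already know $\Phi_L^*(\omega_{\pi_L(H)}) = \omega_H$ globally, and by Definition \ref{def_varphi_L} the classes $\omega_H(Y)$ and $\omega_{\pi_L(H)}(\pi_L(Y))$ are (up to the $\lambda$-ideal correction of Lemma \ref{lem:iota_cohom}) the restrictions $\incl_Y^*(\omega_H)$ and $\incl_{\pi_L(Y)}^*(\omega_{\pi_L(H)})$. Commutativity of the square then gives ${\Phi_L}_|^*(\incl_{\pi_L(Y)}^*\omega_{\pi_L(H)}) = \incl_Y^*(\Phi_L^*\omega_{\pi_L(H)}) = \incl_Y^*(\omega_H)$; the only remaining point is to check that the $\lambda$-corrections on both sides are compatible, i.e. that ${\Phi_L}_|^*$ sends $\ideal(\pi_L(Y))$-components to $\ideal(Y)$-components correctly, which follows from the way $\Phi_L$ acts on the $\wl$-classes (Corollary \ref{lem:Phi_bis} and Lemma \ref{lem:iota_cohom}). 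The main obstacle I expect is precisely this bookkeeping of the $\lambda$-ideal terms: $\omega_H(Y)$ is only well-defined modulo $\ideal(Y)$-type corrections determined by the separation sets $S_F(B(Y),F(T))$, and one must verify that the analogous corrections for $\pi_L(H)$ over $\pi_L(Y)$ pull back to exactly these — this rests on the compatibility of the chamber choices, for which the hypothesis $\pi_L(F(T)) = F(\overline T)$ (and the freedom in Remark \ref{rem:FundChoices}) is exactly what is needed, together with the fact, used already in Lemma \ref{lem:Phi}.(2), that $\pi_L$ is compatible with the sign-vector description of separation.
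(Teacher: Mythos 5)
Your second, "cleaner" route is exactly the paper's proof: invoke Lemma \ref{lem:Phi_star} for the global identity $\Phi_L^*(\omega_{\pi_L(H)}) = \omega_H$, and push it through the commutative square
\[
\incl_Y^* \circ \Phi_L^* = {\Phi_L}_|^* \circ \incl_{\pi_L(Y)}^*,
\]
which the paper draws explicitly. So the approach is the same.

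Where you add something is in flagging the $\lambda$-ideal bookkeeping as the genuine remaining issue. You are right to be suspicious: the paper's ``follows immediately'' hides exactly this step. The square only gives
\[
{\Phi_L}_|^*\bigl(\omega_{\pi_L(H)}(\pi_L(Y))\bigr) - \omega_H(Y)
= \lambda_Y - {\Phi_L}_|^*(\lambda_{\pi_L(Y)}),
\]
where $\lambda_Y$ and $\lambda_{\pi_L(Y)}$ are the $\ideal$-correction terms appearing in Lemma~\ref{lem:iota_cohom}, Equations~\eqref{eq:phi_star_omega} and its $\AL$-analogue. Your pairing argument (route 1) correctly shows the left side pairs to zero against all $\wg_{H'}$, hence lies in $\ideal(Y)\cap H^1$. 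But to conclude it vanishes you also need the pairing against the $\wl^{N_j(Y)}_{B(Y)}$ to vanish, and there you were a bit quick: $({\Phi_L}_|)_*\wl^{N_j(Y)}_{B(Y)}$ is not obviously a pure combination of $\wl$-classes, because $\wl^{N_j(Y)}_{B(Y)}$ differs from $\wl^{N_j(Y)}_{F(T)}$ by $\wg$-terms via Proposition~\ref{prop:cambioL}, and the $\wg_{H}$-term (with this very $H$) could in principle contribute. The way to close this cleanly is to recall that $\omega_H(Y)$ lives in the Orlik--Solomon factor of $H^*(\Sc_{Y,F(Y)}) \simeq H^*(Y)\otimes H^*(\Sal(\A[Y]))$ (Lemma~\ref{lem:cohom_modulo}), that the same holds over $\pi_L(Y)$, and that ${\Phi_L}_|$ respects this tensor decomposition (the natural transformation of Lemma~\ref{NatTra} commutes with the product structure of Remark~\ref{htprod}); on the OS-factor the induced map is the standard $\Sal(\AL[\pi_L(Y)])\to\Sal(\A[Y])$, which sends generator to generator. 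With that observation either of your two routes is complete; without it, there is a small but real hole in both your write-up and in the paper's one-line proof.
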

\begin{proof}
This follows immediately from the previous lemma using the commutativity of the diagram
$$
\begin{tikzcd}
\gre{\Sc_{Y,F(Y)} }
\arrow[d, "{\Phi_L}"]
\arrow[r,hookrightarrow, "\incl_Y"] & 
\Sal(\A)
\arrow[d,"\Phi_L"]\\
\gre{\Sc_{\pi_L(Y),\pi_L(F(Y))}}
\arrow[r,hookrightarrow, "\incl_{\pi_L(Y)}"]
& 
\Sal(\AL).
\end{tikzcd}
$$
\vspace{-2em}

\end{proof}
We can assume the arrangement $\A$ to be totally ordered; let us fix such an ordering.  Moreover, recall that, given any $(d-r)$-dimensional layer $L \in \Cc_r$ in the torus $T$ of dimension $d$, we can consider the linear arrangement $\A[L]$. 

\begin{df}
The ordering of $\A$ induces an ordering of $\A[L]$. For every element of the no-broken-circuit-basis (or nbc-basis, see \cite{yuz2001}) associated to the arrangement $\A[L]$ we can consider the corresponding ordered subset $S = (H_{i_1}, \ldots, H_{i_r}) \subset \A$.

We will write $\omega_{S}$ for the product $\omega_{H_{i_1}} \cdots \omega_{H_{i_r}} \in H^r(M(\A);\Q)$.
\end{df} 

\begin{Cor}\label{cor:Phi_star_alpha}
	Let $\A$ be a toric arrangement in the torus $T$ of dimension $d$. Let $L \in \Cc$ be a layer of $\A$ of rank $r$. 
	Consider the quotient arrangement $\AL$ in $\overline{T}=T/L_0$ and the cellular map $\Phi_{L}: \Sal(\A) \to \Sal(\AL)$. 
	Assume that $\pi_L(F(T)) = F(\overline{T})$. 
	
	Let $S = (H_{i_1}, \ldots, H_{i_r})$ be an element of the nbc-basis of $\A[L]$ and let $\alpha \in H^*(\Sal(\AL);\Q)$ be a class  such that $\alpha - \omega_{\pi_L(S)}$ restricts to zero in $H^*(\Sc_{\pi_L(L)};\Q)$ and $\alpha$ restricts to a class in $\ideal(\overline{L}') \subset H^*(\Sc_{\overline{L}'};\Q)$ for $\overline{L}' \neq \pi_L(L) $.
	Then  we have that:
	\begin{enumerate}
	\item[a)] 
	if $\pi_L(L') = \pi_L(L)$ (that is, if $L' \subseteq L$)
	the class 
	$({\Phi_{L}}^*(\alpha) - \omega_S)$ restricts to $0$ in $H^*(\Sc_{L'};\Q)$;
	\item[b)]
	if $\pi_L(L') \neq \pi_L(L)$
	(that is, if $L' \nsubseteq L$)
	the class
	${\Phi_{L}}^*(\alpha)$ restricts to  a class of $\ideal(L')$ in $H^*(\Sc_{L'};\Q)$.
	\end{enumerate}

\end{Cor}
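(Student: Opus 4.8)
The plan is to deduce both parts from the commutative square of restriction homomorphisms
$$
\incl_{L'}^{*}\circ\Phi_L^{*}={\Phi_L}_|^{*}\circ\incl_{\pi_L(L')}^{*}\colon H^{*}(\Sal(\AL);\Q)\longrightarrow H^{*}(\Sc_{L'};\Q)
$$
provided by the diagram in Corollary \ref{cor:1forme}, together with the identity $\Phi_L^{*}(\omega_{\pi_L(H)})=\omega_H$ for $H\in\A_L$ from Lemma \ref{lem:Phi_star} and the equivalence $\pi_L(L')=\pi_L(L)\Leftrightarrow L'\subseteq L$ from Remark \ref{rmk:restrizioni}. Because every hypertorus occurring in an nbc-basis of $\A[L]$ contains $L$, multiplicativity of $\Phi_L^{*}$ promotes Lemma \ref{lem:Phi_star} to $\Phi_L^{*}(\omega_{\pi_L(S)})=\omega_S$; this is the only genuinely arrangement-theoretic input used in part (a).

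For part (a), since $L'\subseteq L$ we have $\pi_L(L')=\pi_L(L)$, so the hypothesis that $\alpha-\omega_{\pi_L(S)}$ restricts to $0$ in $H^{*}(\Sc_{\pi_L(L)};\Q)$ reads $\incl_{\pi_L(L')}^{*}(\alpha)=\incl_{\pi_L(L')}^{*}(\omega_{\pi_L(S)})$. Applying ${\Phi_L}_|^{*}$ and reading the commutative square in both directions I would then get
$$
\incl_{L'}^{*}\bigl(\Phi_L^{*}(\alpha)\bigr)={\Phi_L}_|^{*}\bigl(\incl_{\pi_L(L')}^{*}(\omega_{\pi_L(S)})\bigr)=\incl_{L'}^{*}\bigl(\Phi_L^{*}(\omega_{\pi_L(S)})\bigr)=\incl_{L'}^{*}(\omega_S),
$$
which says exactly that $\Phi_L^{*}(\alpha)-\omega_S$ restricts to $0$ in $H^{*}(\Sc_{L'};\Q)$.

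For part (b), now $L'\nsubseteq L$ forces $\pi_L(L')\neq\pi_L(L)$, so the second hypothesis on $\alpha$ gives $\incl_{\pi_L(L')}^{*}(\alpha)\in\ideal(\pi_L(L'))$, and by the commutative square it suffices to prove that ${\Phi_L}_|^{*}$ carries $\ideal(\pi_L(L'))$ into $\ideal(L')$. As $\ideal(\pi_L(L'))$ is generated in degree $1$ by the distinguished classes $\lambda^{N_h(\pi_L(L'))}_{B(\pi_L(L'))}$ and ${\Phi_L}_|^{*}$ is a ring homomorphism, it is enough to show that each of their pullbacks lies in $\ideal(L')$. The degree-$1$ part of $\ideal(L')$ is precisely the span of the $\lambda^{N_j(L')}_{B(L')}$, hence the annihilator of $\{\wg_H\mid H\in\A_{L'}\}$ inside $H^{1}(\Sc_{L'};\Q)$ (these are dual bases, Definitions \ref{df:b_cappuccio_L} and \ref{df:ideale_I_L}); so the claim reduces to
$$
\int_{\wg_H}{\Phi_L}_|^{*}\bigl(\lambda^{N_h(\pi_L(L'))}_{B(\pi_L(L'))}\bigr)=\int_{({\Phi_L}_|)_{*}(\wg_H)}\lambda^{N_h(\pi_L(L'))}_{B(\pi_L(L'))}=0\qquad(H\in\A_{L'}),
$$
which holds because $({\Phi_L}_|)_{*}(\wg_H)$ is either $0$ or $\wg_{\pi_L(H)}$ by Lemma \ref{lem:Phi}.(2), while a class of $\lambda$-type in $\mathcal{B}_{\pi_L(L')}(\AL)$ pairs to $0$ with every loop class of the dual basis $\widehat{\mathcal{B}}_{\pi_L(L')}(\AL)$.

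Most of this is formal diagram chasing; the one step with real content is the vanishing just used in part (b), whose substance is the observation that the $\lambda$-generators of $\ideal(\pi_L(L'))$ pull back to classes annihilating all the loop classes $\wg_H$. The only thing I would suppress, exactly as the rest of the paper does, is the harmless identification of $H^{*}(\Sc_{\pi_L(Y),\pi_L(F(Y))};\Q)$ with $H^{*}(\Sc_{\pi_L(Y),F(\pi_L(Y))};\Q)$ needed to align the target of ${\Phi_L}_|$ with the subcomplexes $\Sc_{\pi_L(L)}$ and $\Sc_{\pi_L(L')}$ appearing in the hypotheses: by Remark \ref{htprod} these subcomplexes have canonically isomorphic cohomology, and by Lemma \ref{lem:traslazione} the translation relating two admissible choices of face is trivial on (co)homology.
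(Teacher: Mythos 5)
Your proof is correct and follows essentially the same route as the paper's, whose entire proof is the single sentence ``The result follows from Corollary~\ref{cor:1forme} by multiplicativity.'' You usefully spell out the step the paper leaves implicit in part~(b): verifying that ${\Phi_L}_|^{*}$ carries $\ideal(\pi_L(L'))$ into $\ideal(L')$ by pairing the pulled-back $\lambda$-generators against the loop classes $\wg_H$ and invoking Lemma~\ref{lem:Phi}.(2).
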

\begin{proof}
The result follows from Corollary \ref{cor:1forme} by multiplicativity.
\end{proof}

\begin{Rem}\label{rem:tor_free}
The cohomology ring $H^*(\Sal(\A);\Z)$ is a free $\Z$ module (this has been proved in \cite{dd2}, but follows also from \cite[Remark~\olc{6.1.1}]{caldel17}).
\end{Rem}


\begin{Lem} \label{lem:cohom_modulo}The cohomology ring $H^*(\Sc_{L};\Z)$ is a module over $H^*(T;\Z)$ generated by the restriction of the classes $\omega_S$ for $S \in \mathrm{nbc}(\A[L])$. 
\end{Lem}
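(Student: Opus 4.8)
The plan is to reduce everything to the product decomposition of $\gre{\Sc_L}$ already recorded in the paper. Recall that the $H^*(T;\Z)$-module (indeed algebra) structure on $H^*(\Sc_L;\Z)=H^*(\Sc_{L,F(L)};\Z)$ is the one induced by $\incl_L^*$ together with the identification $H^*(T;\Z)=H^*(T_c;\Z)\cong H^*(\Sc_T;\Z)\subseteq H^*(\Sal(\A);\Z)$. By Remark~\ref{rem:generatiindimensioneuno} (which rests on Remark~\ref{htprod}) the subcomplex $\gre{\Sc_L}$ is homotopy equivalent to $L\times M(\A_\CCC[L])$. Both $H^*(L;\Z)$ and the Orlik--Solomon algebra $H^*(M(\A_\CCC[L]);\Z)$ are free $\Z$-modules, so the K\"unneth theorem gives a ring isomorphism
$$
H^*(\Sc_L;\Z)\cong H^*(L;\Z)\otimes_{\Z}H^*(M(\A_\CCC[L]);\Z),
$$
under which $1\otimes H^*(M(\A_\CCC[L]);\Z)$ is freely $\Z$-spanned by the broken-circuit basis of the Orlik--Solomon algebra, i.e.\ by the classes $\omega_S(L):=\omega_{H_{i_1}}(L)\cdots\omega_{H_{i_r}}(L)$, $S=(H_{i_1},\dots,H_{i_r})\in\mathrm{nbc}(\A[L])$.

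Next I would identify the first tensor factor $H^*(L;\Z)\otimes 1$ with the image $A\subseteq H^*(\Sc_L;\Z)$ of $H^*(T;\Z)$. The structure map is induced by the restriction to $\gre{\Sc_L}$ of the projection $\Sal(\A)\to T_c$, and by \cite[Remark~\olc{4.1.6} and Lemma~\olc{4.2.11}]{caldel17} this restriction factors, up to homotopy and compatibly with the decomposition above, as the projection $L\times M(\A_\CCC[L])\to L$ followed by the inclusion into $T_c$ of $L$ (viewed, as in Remark~\ref{rem:generatiindimensioneuno}, as a coset of a sub-torus of $T$). Since $L$ is a coset of a sub-torus, $H_1(L;\Z)$ is a direct summand of $H_1(T;\Z)$, hence $H^*(T_c;\Z)\to H^*(L;\Z)$ is surjective; therefore $A=H^*(L;\Z)\otimes 1$. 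I would also note that each $\lambda_{B(L)}^{N_h(L)}$ lies in $A$: as an element of the basis $\mathcal B_L(\A)$ dual to $\widehat{\mathcal B}_L(\A)$ it pairs trivially with every $\wg_H$, $H\in\A_L$, and these span the Orlik--Solomon summand $H_1(M(\A_\CCC[L]);\Z)$, so $\lambda_{B(L)}^{N_h(L)}\in H^1(L;\Z)\otimes 1$.

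The rest is bookkeeping with Lemma~\ref{lem:iota_cohom}. For $L\subset H$, formula~\eqref{eq:phi_star_omega} reads $\incl_L^*(\omega_H)=\omega_H(L)+a_H$ with $a_H$ a $\Z$-linear combination of the $\lambda_{B(L)}^{N_h(L)}$, hence $a_H\in A$. Multiplying these out for $S=(H_{i_1},\dots,H_{i_r})\in\mathrm{nbc}(\A[L])$ and rewriting the resulting products of degree-one generators $\omega_H(L)$ in the broken-circuit basis via the Orlik--Solomon relations, one obtains
$$
\incl_L^*(\omega_S)=\omega_S(L)+\sum_{|S'|<|S|}c_{S',S}\,\omega_{S'}(L),\qquad c_{S',S}\in A.
$$
An induction on $|S|$ then yields $\omega_S(L)\in\sum_{S'}A\cdot\incl_L^*(\omega_{S'})$ for all $S\in\mathrm{nbc}(\A[L])$; since $\sum_{S\in\mathrm{nbc}(\A[L])}A\cdot\omega_S(L)=(H^*(L;\Z)\otimes 1)\cdot(1\otimes H^*(M(\A_\CCC[L]);\Z))=H^*(\Sc_L;\Z)$, the classes $\incl_L^*(\omega_S)$ generate $H^*(\Sc_L;\Z)$ as an $H^*(T;\Z)$-module.

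I expect the identification $A=H^*(L;\Z)\otimes 1$ to be the crux: it requires pinning down, compatibly with the product decomposition $\gre{\Sc_L}\simeq L\times M(\A_\CCC[L])$, the homotopy class of the projection $\gre{\Sc_L}\to T_c$, and then using surjectivity of cohomology restriction from a torus to a sub-torus coset. The remaining ingredients---the K\"unneth isomorphism, the no-broken-circuit $\Z$-basis of an Orlik--Solomon algebra, and the formulas of Lemma~\ref{lem:iota_cohom}---are standard or already in the text.
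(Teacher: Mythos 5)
Your proof is correct and rests on the same homotopy equivalence $\gre{\Sc_{L,F(L)}}\simeq L\times M(\A_\CCC[L])$ (Remark~\ref{htprod}) that the paper's proof uses, so the skeleton is the same: identify $H^*(\Sc_L;\Z)$ with $H^*(L;\Z)\otimes H^*(M(\A_\CCC[L]);\Z)$, recognize the image of $H^*(T;\Z)$ as the first factor, and observe that nbc monomials generate the second. The two arguments diverge in what they choose to make explicit. The paper spends most of its proof on claims a)--d) (non-triviality of $\wg_H$, existence of representatives in $\Sc_L$, triviality in $T_c$, vanishing under deletion) in order to justify that the dual classes $\omega_H(L)$ of Definition~\ref{df:b_cappuccio_L} really are the \emph{standard} Orlik--Solomon generators under the K\"unneth isomorphism; it then slides from the $\omega_S(L)$ to the restrictions $\incl_L^*(\omega_S)$ without further comment. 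You do the opposite: you take the identification of the $\omega_H(L)$ with the OS generators essentially for granted (folded into the K\"unneth isomorphism), but you are more careful about the point the paper glosses over --- using Lemma~\ref{lem:iota_cohom} to record that $\incl_L^*(\omega_H)=\omega_H(L)+a_H$ with $a_H\in A$, and then an induction on $|S|$ to pass from a generating set $\{\omega_S(L)\}$ to the generating set $\{\incl_L^*(\omega_S)\}$. That upper-triangularity argument is the right way to close the small gap in the paper's final sentence. If you want your write-up to stand entirely on its own, you should add the paper's claims a)--d), or some equivalent, to justify why $1\otimes H^*(M(\A_\CCC[L]);\Z)$ is spanned by the $\omega_S(L)$: this requires knowing that the $\wg_H$, $H\supset L$, map to the meridian classes under $\gre{\Sc_L}\to M(\A_\CCC[L])$, which is precisely what those claims establish.
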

\begin{proof}
The lemma follows from \cite[Lemma \olc{4.2.15}]{caldel17},
where the homotopy equivalence
$$
\Theta: \gre{\Sc_{L,F(L)}} \to |\mathcal F(\A^L)| \times \Sal(\A[L])
$$
is given.
Since the projection on the first component of $\Theta$ is the projection $ \gre{\Sc_{L,F(L)}} \to L_c$ induced by $\Sal(\A) \to T_c$, we have that the cohomology of $\Sc_{L,F}$ is a $H^*(T;\Z)$-module generated by generators of the Orlik-Solomon algebra $H^*( \Sal(\A[L]);\Z)$. 
Finally the following claims hold:
\begin{itemize}[leftmargin=1.5em]
	\item[a)] The homology classes $\wg_{H}$ are non-trivial (in fact their image in $\Sal(\overline{\A}_H)$ is non-zero).
	\item[b)] For $H \supset L$ the homology classes $\wg_{H}$ have a representative in $\Sc_{L,F(L)}$.\\ In fact, for every $G\in \mathcal F(A^L)$ the arrangement $\A[G]$ contains $H$ and hence in particular a face $W$ supported on $H$. Then we can consider the morphism $m$ of $\mathcal F(\A)$ that arises from the order relation $G\leq W$ in $\mathcal F(\A[G])$ (ensuring that $F_m=W$). Thus we see that the poset $\Sc^{F}(\A[G])$ contains the subposet $\Omega^{m}$ of Remark \ref{rem:OmegaId}. 
	\item[c)] The classes $\wg_{H}$ project to trivial classes in $T_c=\gre{\Sc_{T}}$.
	\item[d)] If $\B$ is the essentialization of $\A \setminus \{H\}$, the classes $\wg_{H}$ maps to trivial classes in $\Sal(\B)$ (this claim follows directly from  Lemma \ref{lem:sottoarrangiamento} and Remark \ref{lem:sottoarrangiamento} when $\A$ and $\A \setminus \{H\}$ have the same rank, otherwise from Lemma \ref{lem:Phi}).
\end{itemize} 
Hence we have that the classes $\omega_{H}(L)$ for $H \supset L$, that are the restrictions of the corresponding classes $\wg_{H}$ to $\Sc_{L}$, are the standard generators of the Orlik-Solomon algebra $H^*( \Sal(\A[L]);\Z)$.
\end{proof}
\subsection{The cohomology ring}
\begin{df}
Let $\A$ be a toric arrangement in a torus $T$. Let $L \in \Cc_r(\A)$ be a layer and let $S \in \mathrm{nbc}(\A[L])$.
We define the subarrangement  $\A_S \subset \A$ as the set of hypertori of $\A$ associated to the elements of $S$.
\end{df}

\begin{df}
Let $\A$ be a toric arrangement in a torus $T$. We define the \emph{stabilizer} of $\A$ as the group $G \subset T$ given by
$$
G:=\{g \in T \mid \forall H \in \A, gH = H\}.
$$
and the \emph{essential stabilizer} of $\A$	as the group $\G$ of the connected components of $G$.
Hence $\G :=G/G_0$, where $G_0$ is the connected component of the identity of $G$.

Given a layer $L \in \Cc_r(\A)$ and an element $S \in \mathrm{nbc}(\A[L])$ we write $G_S$ (resp. $\G_S$) for the stabilizer (resp.  essential stabilizer) of $\A_S$.
\end{df}
\begin{Rem}
If $\A$ is essential then $G$ is discrete and we have $\G \simeq G$.
In general, by choosing a direct summand of $G_0$ in $T$ we get a lifting $\G \to G$. Hence we can always identify $\G$ with a subgroup of $T$ that acts by multiplication on the layers in $\Cc(\A)$.
\end{Rem}


\begin{Prop}\label{prop:def_omega_S_L}
	Let $\A$ be an essential arrangement in a torus $T$ of dimension $d$. Let $L \in \Cc_r$ a layer of rank $r \geq 0$ and let $S \in \mathrm{nbc}(\A[L])$ be a no-broken-circuit of length $r$.
	For a layer of $Y \in \Cc(\A)$ let $\overline{Y}$ be the smallest layer in $\Cc(\A_S)$ containing $Y$.
	There exists a 
	unique 
	cohomology class $\omega_{S,L} \in H^r(\Sal(\A);\Q)$ such that for every layer $Y \in \Cc(\A)$ and for every face $F \in \Fc(\A_0)$ with $\supp(F) = Y$ we have: 
	\begin{enumerate}
	\item[i)] if $L \subset \overline{Y}$ then
	 $\omega_{S,L}-\frac{\omega_{S}}{|\Stab_{\G_S}( \overline{Y})|}$
	 restricts to $0$ in $H^r(\Sc_{Y,F};\Q)$;	
	\item[ii)] if $L \nsubseteq \overline{Y}$ then $\omega_{S,L}$ restricts to $0$ in $H^r(\Sc_{Y,F};\Q)$.
		\end{enumerate}
\end{Prop}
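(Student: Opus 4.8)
The plan is to construct $\omega_{S,L}$ by descending through the quotient $\Phi_L$ and to verify uniqueness first, since uniqueness will tell us exactly which class we must produce. For uniqueness: if $\omega'$ and $\omega''$ both satisfy (i) and (ii), then their difference $\omega'-\omega''$ restricts to zero in $H^r(\Sc_{Y,F};\Q)$ for \emph{every} layer $Y$ and every $F$ with $\supp(F)=Y$; in particular taking $Y=T$ (so that $\Sc_{T,F(T)}=\Sc_T$) and all the layers $Y$ of rank $\leq r$ we can detect whether a degree-$r$ class is zero. The key input here is that the collection of restriction maps $\incl_Y^*\colon H^r(\Sal(\A);\Q)\to H^r(\Sc_{Y,F(Y)};\Q)$, ranging over $Y\in\Cc$, is injective in each degree — this should follow from the fact (used implicitly throughout Section \ref{sec:cohom}, and traceable to the Mayer--Vietoris / hocolim structure of $\Sal(\A)=\gre{\int\mathscr D}$ together with Remark \ref{htprod} and Lemma \ref{lem:cohom_modulo}) that the subcomplexes $\gre{\Sc_{L}}$ cover enough of $\Sal(\A)$ to separate cohomology. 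Granting this, $\omega'-\omega''=0$.

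For existence I would induct on $\rk(L)=r$. If $r=0$ then $L=T$, $S=\emptyset$, $\A_S$ is the empty arrangement, $\G_S$ is trivial, and $\omega_{S,L}=1$ works: condition (i) is $1-1=0$ and condition (ii) is vacuous since $L=T\subseteq\overline Y$ always. For $r>0$, apply the quotient map: by Choice \ref{choice_BL} and the hypothesis that we may arrange $\pi_L(F(T))=F(\overline T)$ (this is where a small genericity remark on the choice of $B(T)$ is needed, cf.\ the standing assumptions in Corollary \ref{cor:Phi_star_alpha}), consider $\AL$ in $\overline T=T/L_0$, in which $\pi_L(L)$ is the zero-dimensional layer, i.e.\ $\AL$ restricted to $\pi_L(L)$ is an essential central arrangement of rank $r$. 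In $H^*(\Sal(\AL);\Q)$ one has the class $\omega_{\pi_L(S)}=\omega_{\pi_L(H_{i_1})}\cdots\omega_{\pi_L(H_{i_r})}$, and by an application of the uniqueness/existence statement one dimension down — or more directly by the Orlik--Solomon structure on $H^*(\Sc_{\pi_L(L)})$ from Lemma \ref{lem:cohom_modulo} and the decomposition of $H^*(\Sal(\AL))$ — one produces in $\Sal(\AL)$ a class $\alpha$ with $\alpha-\omega_{\pi_L(S)}$ restricting to zero on $\Sc_{\pi_L(L)}$ and $\alpha$ restricting into $\ideal(\overline L')$ on every other $\Sc_{\overline L'}$. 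Then set
$$
\omega_{S,L}:=\frac{1}{c}\,\Phi_L^*(\alpha)
$$
for the appropriate normalizing constant $c$ (which will turn out to be $|\Stab_{\G_S}(L)|$ relative to $|\Stab_{\G_{\pi_L(S)}}(\pi_L(L))|$), using Definition \ref{df:map_Phi}.

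The verification of (i) and (ii) for this candidate is exactly Corollary \ref{cor:Phi_star_alpha}, applied layer by layer. Concretely, fix $Y$ and $F$ with $\supp(F)=Y$; by Remark \ref{rmk:restrizioni}, $\pi_L(Y)=\pi_L(L)$ iff $Y\subseteq L$, which is the dichotomy in Corollary \ref{cor:Phi_star_alpha}(a)/(b). In case $Y\subseteq L$, part (a) gives that $\Phi_L^*(\alpha)-\omega_S$ restricts to $0$ in $H^*(\Sc_{Y};\Q)$, and one then has to match the scalar: the ratio between $\omega_S$ and $\Phi_L^*(\alpha)$ on $\Sc_Y$ is governed by how many translates in $\G_S$ collapse under $\pi_L$, which is precisely $|\Stab_{\G_S}(\overline Y)|$ — this is the bookkeeping step and where I expect the \textbf{main obstacle} to lie, since it requires a careful use of Lemma \ref{lem:traslazione} (the group action $\mu_g$ is homotopic to the identity on each $\gre{\Sc_{L,F}}$) to identify the multiplicity with a stabilizer order rather than merely controlling it up to a unit. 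In case $Y\nsubseteq L$, part (b) gives that $\Phi_L^*(\alpha)$ restricts into $\ideal(Y)$; but condition (ii) additionally needs $\omega_{S,L}$ to restrict to $0$, not merely into $\ideal(Y)$ — here one uses that $L\nsubseteq\overline Y$ forces, via Lemma \ref{lem:iota_cohom} and Corollary \ref{cor:nullproduct}, that a product of $r$ of the $\lambda^{N_h}_{B(Y)}$-type classes supported "away from $L$" already vanishes in degree $r$ because $\rk(Y)$ is too small in the relevant directions, so the $\ideal(Y)$-component is in fact zero. Assembling these checks over all $Y$ gives both (i) and (ii), and combined with the uniqueness argument at the start completes the proof.
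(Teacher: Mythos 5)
Your plan has the right geometric idea (pull back a class through $\Phi_L$), but the induction scheme does not close and the hardest case of the paper's proof is not addressed at all. Concretely: quotienting by $L_0$ takes you from a pair $(d,r)$ with $r<d$ to the pair $(r,r)$ in the torus $\overline T=T/L_0$ (which has $\dim\overline T=r$ and $\pi_L(L)$ a point of rank $r$); it does \emph{not} lower $r$. When $r=d$, $L_0$ is trivial, $\Phi_L$ is the identity, and the quotient buys you nothing. So an induction ``on $\rk(L)=r$ with $r>0$ handled by $\Phi_L^*$'' never terminates, and the genuine content of the proposition is the case $r=d$ that your proposal skips. In the paper that case is a separate argument: one first reduces to $\A=\A_S$ (via Lemma \ref{lem:sottoarrangiamento} and Corollary \ref{rem:sottoarrangiamento}), then chooses $B_0$ so that $\A_0$ is boolean and the coherence hypothesis of Theorem \ref{thm:coherent} holds, then invokes the existence/vanishing statements from \cite{caldel17} (Theorems A and B) to produce a candidate $\omega_{S,L}$ with the right restrictions at the $F(Y)$'s, and finally uses the group-average $\omega^Y_{S,L}=\sum_{g\in\Stab_{\G_S}(Y)}g^*\omega_{S,L}$ together with Lemma \ref{lem:traslazione} and the splitting \eqref{eq:spezzo} to identify the restriction for arbitrary $F$ with $\frac{\omega_S}{|\Stab_{\G_S}(Y)|}$. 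That averaging step is precisely where the stabilizer order enters; calling it ``bookkeeping'' and expecting it to fall out of $\Phi_L^*$ alone is the gap.

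Two smaller issues. First, your normalizing constant $c$ in $\omega_{S,L}:=\frac{1}{c}\Phi_L^*(\alpha)$ is not needed in case $r<d$: the paper just sets $\omega_{S,L}:=\Phi_L^*(\omega_{\overline S,\overline L})$ and the stabilizer orders on both sides already agree because $\G_S\simeq\G_{\overline S}$. Second, your argument that the $\ideal(Y)$-component vanishes via Corollary \ref{cor:nullproduct} does not apply: that corollary concerns classes pulled back from $H^*(T_c)$, which $\omega_{S,L}$ is not. The paper gets actual vanishing (not just $\ideal$-membership) by feeding the \emph{inductive hypothesis} -- that $\omega_{\overline S,\overline L}$ restricts to zero on $\Sc_{\pi_L(Y')}$ when $\overline L\nsubseteq\pi_L(Y')$ -- through the commuting square of Corollary \ref{cor:1forme}, rather than by a degree count. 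Finally, on uniqueness: you invoke injectivity of $\bigoplus_Y\incl_Y^*$ as a black box, but in this paper that injectivity is Theorem \ref{thm:iniettivo}, whose proof \emph{uses} the existence and properties of $\omega_{S,L}$; so as an argument placed before Theorem \ref{thm:iniettivo} your uniqueness paragraph is premature unless you supply an independent proof of injectivity, which the paper does not do.
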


\begin{proof} We prove our statement by induction on $d$ and $r$, considering the following cases.
	\begin{itemize}[leftmargin=1.5em]
		\item[a)]{[$d=0$]} For $L=T$ and $S = \emptyset$ we can just set $\omega_{S,L}$ as the constant class $1 \in H^0(\Sal(\A);\Q)$.
		\item[b)]{[$d=1$]} In the case where $r=0$ the set $S$ is again empty and $\omega_{S,L}=\omega_{\emptyset,T}$ is the generator $1 \in H^0(\Sal(\A);\Q)$.
		When $r=1$ the result is trivial as we can set $\omega_{S,L} = \omega_{\{H\},H} = \omega_{H}$.		
		
		\item[c)]{[$d>1, r<d$]}
		Suppose now that $\dim T = d >1$.
		We can assume that the statement is true for any essential arrangement in a torus $T'$ with $\dim T' <d$. We show that if $\rk(L)= r < d$ the statement follows also for $\dim T = d$.
		In fact, given $S =  (H_{i_1}, \ldots, H_{i_r}) \in \mathrm{nbc}(\A[L])$ of length $r$, we can consider the quotient arrangement $\AS:= \A_S/L_0$, the layer $\overline{L} := L/L_0 \in \Cc_r(\AS)$ and 
		the element $\overline{S} :=\pi_L(S)\in \mathrm{nbc}(\AS[\overline{L}])$. By induction on the dimension $d$
		the class $\omega_{\overline{S},\overline{L}}$ exists and we can set
	 $${\omega}_{S,L} := \Phi_L^*(\omega_{\overline{S},\overline{L}})$$ 
	 where $\Phi_L^*$ is the map defined in Lemma \ref{lem:Phi_star}.
	 Now by induction on the dimension $d$ the two required relation can be computed: if $\overline{L} \subset \pi_L(Y)$ (that is if $L \subset \overline{Y}$) then $\omega_{\overline{S},\overline{L}} - \frac{\omega_{\overline{S}}}  {|\Stab_{\G_{\overline{S}}}(\pi_L(Y))|}$ restricts to $0$ in $\Sc_{\pi_L(Y),R}$ and if $\overline{L} \nsubseteq \pi_L(Y)$ (that is if $L \nsubseteq \overline{Y}$) then $\omega_{\overline{S},\overline{L}}$ restricts to $0$ in $\Sc_{\pi_L(Y),R}$. Hence, since $\G_S \simeq \G_{\overline{S}}$,
	 according to Corollary \ref{cor:Phi_star_alpha} we have that $${\omega}_{S,L} - \frac{\omega_{S}}{|\Stab_{\G_S}( \overline{Y})|}$$ restricts to $0$ in $H^*(\Sc_{Y,F};\Q)$ for all $Y,F$ such that $L \subseteq \overline{Y}$ and ${\omega}_{S,L}$  restricts to $0$  in $H^*(\Sc_{Y',F'};\Q)$ if $L \nsubseteq \overline{Y'}$.
		
	\item[d)]{[$d>1, r=d$]} 
	First we consider the case when $\A=\A_S$. In this case the arrangement $\A_0$ is boolean and for any choice of $B_0\in \Tc(\A_0)$ the closure of $B_0$ meets every hyperplane in maximal dimension.
	Then 
	we can set $F(Y) = \overline{B_0} \cap Y_0$ and Theorem 4.2.17 of \cite{caldel17} holds with $F_0=F(Y)$ (see \cite[Remark 2.1]{erratum_corto} and Theorem \ref{thm:coherent} for a self-contained proof). 
As a consequence Theorem A and B of \cite{caldel17} can be applied in this setting.




From \cite[Thm.~{A}]{caldel17} we can choose a class $\omega_{S,L} \in H^r(\Sal(\A);\Q)$ such that $\omega_{S,L} - \omega_S$ restricts to zero in 
$H^*(\Sc_{L,F(L)};\Q) \simeq H^*(L)\otimes H^*{\Sal(\A[L])}$
 and $\omega_{S,L}$ restricts to zero in 
 $H^*(\Sc_{Y,F(Y)};\Q) \simeq H^*(Y)\otimes H^*{\Sal(\A[Y])}$
  for $Y \neq L$. From the definition of coherent elements (see \cite[Def.~\olc{2.3.4}]{caldel17}) and from \cite[Thm.~\olc{B}]{caldel17} we have that $\omega_{S,L}$ restricts to zero in $H^r(\Sc_{Y,F(Y)};\Q)$ for $Y \neq L$. Hence $\omega_{S,L}$ satisfies the required conditions i) and ii) when $F=F(Y)$. 

Recall that for $g\in \Stab_{\G_S}(Y)$ we have a multiplication map $g: Y \to Y$  that by Lemma \ref{lem:traslazione} is homotopic to the identity map and hence induces the identity map on $H^*(\Sc_{Y,F})$. 

We need to prove that i) and ii) are satisfied  also for every complex $\Sc_{Y,F}$, with $F \neq F(Y)$. This is trivially true for $Y=L$, since in this case $F=F(Y)$ is the unique  face in $\Fc(\A_0)$ with $\supp(F)=Y_0$. When $Y \neq L$ we can consider the class
$$
\omega^Y_{S,L}:=\sum_{g \in \Stab_{\G_S}(Y)} g^* \omega_{S,L} \in H^*(\Sal(\A);\Q).
$$
Let $S = S_1 \sqcup S_2$, where the elements of $S_1$ are the elements of $S$ that contain $Y$. Let $\epsilon \in \{\pm1 \}$ be the sign such that $\omega_S = \epsilon \omega_{S_1} \omega_{S_2}$. 
Consider the map $\Phi_{S_1}: \Sal(\A) \to \Sal(\overline{\A}_{S_1})$. 
We claim that
\begin{equation}\label{eq:spezzo}
\omega^Y_{S,L} = \epsilon \Phi_{S_1}^*(\omega_{\overline{S_1},\pi_Y(Y)}) \omega_{S_2}.
\end{equation}
The equality follows checking that the two terms agree when restricted to $\Sc_{W,F(W)}$ for every $W \in \Cc(A_S)$. In fact we have:
\begin{equation*}
\varphi_W^* (\omega^Y_{S,L})=\frac{\omega_S}{|\Stab_{\G_S}(W)|} \cdot |\Stab_{\G_S}(Y) \cap \Stab_{\G_S}(W)|
\end{equation*}
if $L \subseteq \Stab_{\G_S}(Y).W$ and
\begin{equation*}
\varphi_W^* (\omega^Y_{S,L})=0
\end{equation*}
otherwise.
On the other side we have 
\begin{equation*}
\varphi_W^*( \epsilon  \Phi_{S_1}^*(\omega_{\overline{S_1},\pi_Y(Y)}) \omega_{S_2}) = \epsilon \frac{\omega_{S_1}}{|\Stab_{\G_{\overline{S_1}}}(\pi_Y(W))|} \omega_{S_2} = \frac{\omega_{S}}{|\Stab_{\G_{\overline{S_1}}}(\pi_Y(W))|}
\end{equation*}
if $\pi_Y(Y) \subseteq \pi_Y(W)$ and
\begin{equation*}
\varphi_W^*( \epsilon  \Phi_{S_1}^*(\omega_{\overline{S_1},\pi_Y(Y)}) \omega_{S_2}) = 0
\end{equation*}
otherwise.
The equality $$\frac{ |\Stab_{\G_S}(Y) \cap \Stab_{\G_S}(W)|}{|\Stab_{\G_S}(W)|} = \frac{1}{|\Stab_{\G_{\overline{S_1}}}(\pi_Y(W))|}$$
follows 
since the kernel of the surjective homomorphism
$$
\Stab_{\G_S}(W) \to \Stab_{\G_{\overline{S_1}}}(\pi_Y(W))
$$
is exactly the subgroup $\Stab_{\G_S}(Y) \cap \Stab_{\G_S}(W)$.
Moreover the two conditions $L \subseteq \Stab_{\G_S}(Y).W$ and $\pi_Y(Y) \subseteq \pi_Y(W)$ are equivalent. In fact $\pi_Y(L) = \pi_Y(Y)$ and $\pi_Y(\Stab_{\G_S}(Y).W) = \pi_Y(W)$ and this proves that the first condition implies the second one. Since $\pi_Y(W)$ is the quotient by $Y_0$ of the smallest layer $\overline{W}$ of $\A_{S_1}$ containing $W$, the second condition means that $Y \subset \overline{W}$ and then $L$, which is a point of $Y$, is contained in $\Stab_{\G_S}(Y).W$, hence the second condition implies the first one. Then we have proved Equation \eqref{eq:spezzo}.
In particular we have by induction that for every face $F \in \Fc(\A_0)$ such that $\supp(F)=Y_0$ the class
$\omega_{S,L}$ restricts in $H^*(\Sc_{Y,F})$ as 
$\frac{\omega_S}{|\Stab_{\G_S}(Y)|}$ and this proves the proposition if $\A=\A_S$. 

If $\A \neq \A_S$ we can consider the map $\Sal(\A)\to \Sal(\A_S)$ induced by the inclusion $M(\A) \into M(\A_S)$ and the result follows applying Lemma \ref{lem:sottoarrangiamento} and Remark \ref{rem:sottoarrangiamento}.
\end{itemize}
\end{proof}

\begin{Thm}\label{thm:iniettivo}
Let $\A$ be an essential toric arrangement in $T$. 
The homomorphism of algebras
$$
\bigoplus_{L \in \Cc(\A)} \incl_L: H^*(\Sal(\A);\Z) \to \bigoplus_{L \in \Cc(\A)} H^*(\Sc_{L};\Z)
$$
is injective. 
\end{Thm}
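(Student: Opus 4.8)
By Remark~\ref{rem:tor_free} the source $H^*(\Sal(\A);\Z)$ is torsion-free, so it suffices to prove that $\bigoplus_L\incl_L$ is injective after tensoring with $\Q$. I would test this against the explicit set of classes
$$
\mathcal{C}:=\bigl\{\ \lambda_J\,\omega_{S,L}\ :\ L\in\Cc,\ S\in\mathrm{nbc}(\A[L]),\ J\in\mathcal{J}(L)\ \bigr\},
$$
where $\omega_{S,L}$ is the class of Proposition~\ref{prop:def_omega_S_L}, $\omega_S=\prod_{H\in S}\omega_H$, $\lambda_J=\prod_{i\in J}\lambda^{M_i}$, and for each $L$ the family $\mathcal{J}(L)$ is a collection of subsets of $\{1,\dots,d\}$ chosen so that $\{\lambda_J|_{L_c}\}_{J\in\mathcal{J}(L)}$ is a basis of $H^*(L_c;\Q)$ (possible because $\dim_\Q H^{j}(L_c)=\binom{\dim L}{j}$). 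The core of the argument is the claim that if $\sum_{L,S,J}c_{L,S,J}\,\lambda_J\omega_{S,L}$ lies in $\ker(\bigoplus_L\incl_L)$ then all $c_{L,S,J}=0$. Applied to the zero combination this gives linear independence of $\mathcal{C}$; since $|\mathcal{C}|=\sum_L|\mathrm{nbc}(\A[L])|\,2^{\dim L}$ equals the value at $t=1$ of the De Concini--Procesi Poincaré polynomial $\sum_L|\mu(\hat 0,L)|\,t^{\rk L}(1+t)^{\dim L}$ of $M(\A)$ (see \cite{dp2005}), $\mathcal{C}$ is a basis of $H^*(\Sal(\A);\Q)$, and the claim read for a general element of the kernel then proves the theorem.

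To prove the claim I would filter $H^*(\Sal(\A);\Q)$ by the powers $\ideal^p$ of the ideal $\ideal$ of Definition~\ref{df:ideale_I}. Since the $\lambda^{M_i}$ lie in the subring $H^*(\Sc_T;\Q)\cong H^*(T_c;\Q)$ (an exterior algebra on $d$ generators) and this subring injects into $H^*(\Sal(\A);\Q)$, the filtration is finite: $\ideal^{d+1}=0$. By Corollary~\ref{cor:phi_ideali} and multiplicativity every $\incl_L$ respects the filtrations, $\incl_L(\ideal^p)\subseteq\ideal(L)^p$, and via the product decomposition $\gre{\Sc_L}\simeq L_c\times\Sal(\A[L])$ of Remark~\ref{htprod} one identifies $\ideal(L)^p/\ideal(L)^{p+1}\cong H^p(L_c;\Q)\otimes_\Q\mathrm{OS}(\A[L])$, with $\mathrm{OS}(\A[L])=H^*(\Sal(\A[L]);\Q)$. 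Given a hypothetical counterexample, let $p_0$ be the least $|J|$ occurring among its nonzero terms; all those terms lie in $\ideal^{p_0}$, and modulo $\ideal^{p_0+1}$ only the ones with $|J|=p_0$ survive. Applying $\incl_L$ to the resulting class in $\ideal^{p_0}/\ideal^{p_0+1}$ (it maps into $\ideal(L)^{p_0}/\ideal(L)^{p_0+1}$ and is killed there) and extracting the summand $H^{p_0}(L_c)\otimes\mathrm{OS}^{\rk L}(\A[L])$, I would read off the coefficients.

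The ingredients for this extraction are: $\incl_L(\lambda^{M_i})\in\ideal(L)$, and $\incl_L(\omega_{S,L})\equiv\frac{1}{|\Stab_{\G_S}(L)|}\,\omega_S\pmod{\ideal(L)}$ (from Lemma~\ref{lem:iota_cohom} and Proposition~\ref{prop:def_omega_S_L}(i), since the smallest layer of $\A_S$ through $L$ is $L$ itself), where $\{\omega_S:S\in\mathrm{nbc}(\A[L])\}$ is a basis of the top piece $\mathrm{OS}^{\rk L}(\A[L])$; together with the \emph{triangularity} statement that, for $L'\neq L$, $\incl_L(\omega_{S',L'})\in\ideal(L)$ whenever $\rk L'\geq\rk L$, while $\incl_L(\omega_{S',L'})$ has Orlik--Solomon degree $<\rk L$ when $\rk L'<\rk L$. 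Granting these, only the terms with the given $L$ and $|J|=p_0$ reach the $H^{p_0}(L_c)\otimes\mathrm{OS}^{\rk L}(\A[L])$-summand, producing
$$
\sum_{S}\Bigl(\tfrac{1}{|\Stab_{\G_S}(L)|}\sum_{J\in\mathcal{J}(L),\ |J|=p_0} c_{L,S,J}\,\lambda_J|_{L_c}\Bigr)\otimes\omega_S=0,
$$
whence, by independence of the $\omega_S$ and of the $\lambda_J|_{L_c}$, all $c_{L,S,J}$ with $|J|=p_0$ vanish, contradicting the minimality of $p_0$.

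I expect the triangularity statement to be the main obstacle (together with the bookkeeping of the decomposition of $\ideal(L)^p/\ideal(L)^{p+1}$ and the compatible choices $\mathcal{J}(L)$, which rest on the product structure of $\gre{\Sc_L}$ and the De Concini--Procesi Betti numbers). For $\rk L'>\rk L$ it is a degree count: by Proposition~\ref{prop:def_omega_S_L}, $\incl_L(\omega_{S',L'})$ is $0$ or a multiple of $\incl_L(\omega_{S'})$, a class of degree $\rk L'$, whose image in the rank-$\rk L$ algebra $\mathrm{OS}(\A[L])$ is therefore zero, so $\incl_L(\omega_{S',L'})\in\ideal(L)$. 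The delicate case is $\rk L'=\rk L$, $L'\neq L$: if $\incl_L(\omega_{S'})\notin\ideal(L)$ then, by Lemma~\ref{lem:iota_cohom}, every hypertorus of $S'$ contains $L$ and $S'$ is also an nbc-basis of $\A[L]$, so the smallest layer $\overline L$ of $\A_{S'}$ through $L$ has codimension $\rk L$; Proposition~\ref{prop:def_omega_S_L} then requires $L'\subseteq\overline L$ for $\incl_L(\omega_{S',L'})$ to be nonzero, and $\codim L'=\rk L=\codim\overline L$ forces $L'=\overline L\supseteq L$, i.e.\ $L'=L$, a contradiction; hence $\incl_L(\omega_{S',L'})=0$.
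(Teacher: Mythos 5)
Your proof is correct and follows essentially the same strategy as the paper: reduce to $\Q$ via torsion-freeness, filter both sides by powers of the ideal generated by the torus classes $\lambda^{M_i}$ (resp.\ $\lambda_{B(L)}^{N_h(L)}$), pass to the associated graded where the product decomposition $\gre{\Sc_L}\simeq L_c\times\Sal(\A[L])$ and Proposition~\ref{prop:def_omega_S_L} show that each $\omega_{S,L}\cdot\lambda$ hits the top Orlik--Solomon piece of $\Sc_L$ and dies (or lands in lower OS degree) in the other $\Sc_{L'}$, and compare dimensions against the De Concini--Procesi Poincar\'e polynomial. The paper phrases the conclusion as a linear-independence plus rank count for the graded map $\overline{\Phi}$, whereas you phrase it as a ``minimal $p_0$'' extraction argument, but these are the same computation, with your version being somewhat more explicit about the triangularity step.
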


\begin{proof}
	Let $R$ be a ring and let $\II_R$ (resp.~$\JJ_R$) be the ideal of $A_R^*:=H^*(\Sal(\A);R)$ (resp.~ $B_R^*:= \bigoplus_{L \in \Cc(\A)} H^*(\Sc_{L};R)$) generated by the restriction of $H^1(T;R) \simeq H^1(\Sc_T;R)$. Note that $\II_R$ and $\JJ_R$ are graded ideals with respect to the cohomological graduation and we will write $(\II_R)_j$ (resp.~$(\JJ_R)_j$) for the graded component of $\II_R$ (rsp.~$\JJ_R$) in $A^j_R$ (resp.~$B^j_R$). 
	
	The powers of the ideals $\II_R$  and $\JJ_R$ induce filtrations on the rings $A_R^*$ and $B_R^*$. 	
	Let $\Gr(A_R)$ and $\Gr(B_R)$ be the associated bi-graded groups, where we write $\Gr_i(A^j_R)$ (resp.~$\Gr_i(B^j_R)$) for $(\II_R^i/\II_R^{i+1})_j$ (resp.~$(\JJ_R^i/\JJ_R^{i+1})_j$).
	
	The map $\Phi:= \oplus_{L \in \Cc(\A)} \incl_L$ induces an homomorphism of bi-graded groups
	$$
	\overline{\Phi}: \Gr_i(A^j_R) \to \Gr_i(B^j_R).
	$$
	As recalled in Remark \ref{rem:tor_free}, the cohomology groups $H^*(\Sal(\A);\Z)$ and $H^*(\Sc_{L};\Z)$ are torsion free 
	and hence $A_\Z$ (resp.~$B_\Z$) includes in $A_\Q$ (resp.~$B_\Q$). Moreover the injectivity of $\overline{\Phi}$ implies the injectivity of the map $\Phi$. As a consequence of these two facts, in order to prove that $\Phi$ is injective for $R=\Z$ it will be enough to prove that $\overline{\Phi}$ is injective  when $R=\Q$. 
	
	This can be seen showing that $\Gr_i(A^j_\Q)$ and $\overline{\Phi}(\Gr_i(A^j_\Q))$ have the same dimension. In fact if we fix $L \in \Cc(\A)$ with $\rk (L) = l$ we have that $\Gr_i(H^{l+i}(\Sc_L;\Q)) \simeq H^{l}(M(\A[L]);\Q) \otimes H^i(L;\Q)$. For a given  $S \in \mathrm{nbc}(\A[L])$ with $|S|=l$ and $\lambda \in H^i(T;\Q)$, the class $\omega_{S,L} \cdot \lambda$ belongs to $\Gr_i(A^{l+i}_\Q)$. 
	
	It follows from Proposition \ref{prop:def_omega_S_L} that $\omega_{S,L}$ maps to $\omega_S(L) \otimes \lambda$ in the graded piece $\Gr_i(H^{l+i}(\Sc_L;\Q))$. 
	
	Moreover for $L' \neq L$ with $\rk(L')=l'$ we have that $\omega_{S,L}$ maps to $0$ in the graded piece $\Gr_i(H^{l+i}(\Sc_{L'};\Q))$. Again this follows from Lemma \ref{lem:iota_cohom}
	and Proposition \ref{prop:def_omega_S_L} since: either at least one of the hypertori $H_s$ for $s \in S$ does not contains $L'$ and then $\omega_{S,L}$ maps to $\JJ_R$, either $L'\subsetneq L$ and hence $\omega_S(L)$ has dimension less then $l'$ in $H^{*}(\Sc_{L'};\Q)$. 
	
	As a consequence the images of the classes $\omega_{S,L} \cdot \lambda$ for $L \in \Cc(\A)$, $S \in \mathrm{nbc}(\A[L])$ with $|S|=\rk(L)$ and $\lambda$ in a basis of $H^i(L;\Q)$ in $\Gr(B_\Q)$ are linearly independent
	and the rank of the image of $\Gr(A_\Q)$ in $\Gr(B_\Q)$ is greater or equal to 
	$$
	\sum_{L \in \Cc(\A)} 2^{d-\rk(L)} \dim H^{\rk(L)}(M(\A[L]);\Q)
	$$
	that is the dimension of $A_\Q$ (see \cite{looi93,dp2005}). Hence $\overline{\Phi}$ is an injective homomorphism.
\end{proof}

\begin{Prop}\label{prop:classi_intere}
The classes $\omega_{S,L}$ defined in Proposition \ref{prop:def_omega_S_L} are integer classes.
\end{Prop}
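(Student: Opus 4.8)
The plan is to run an induction on $d=\dim T$, organized for each $d$ by the rank $r=\rk L$ exactly as in the proof of Proposition \ref{prop:def_omega_S_L}, and to check at each step that the class produced there lies in the image of the map $H^*(\Sal(\A);\Z)\to H^*(\Sal(\A);\Q)$, which is injective by Remark \ref{rem:tor_free} (so ``integer class'' may be read as ``element of this subgroup''). The only facts about this image I would use are that it is a subring and that it is preserved under pullback along any cellular map; together with the integrality of the combinatorial classes of \cite[Thm.~A]{caldel17} in the boolean case, these are enough.

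For the base cases $d\le 1$ (cases (a), (b) of that proof) the class $\omega_{S,L}$ is either the unit of $H^0$, visibly integral, or one of the classes $\omega_H$ with $H\in\A$. To see that $\omega_H$ is an integer class I would use the splitting coming from the homeomorphism $\gre{\Sc_T}\xrightarrow{\sim}T_c$: since the composite $\gre{\Sc_T}\hookrightarrow\Sal(\A)\to T_c$ is a homeomorphism, $\Sal(\A)\to T_c$ is split on $H_1$, so $H_1(\Sal(\A);\Z)\cong H_1(T_c;\Z)\oplus K$ with $K=\ker\!\big(H_1(\Sal(\A);\Z)\to H_1(T_c;\Z)\big)$. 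Each $\wg_H$ lies in $K$, and by Lemma \ref{lem:Phi} the maps $(\Phi_{H'})_{*}$ carry $\wg_H$ to a primitive class of the torsion-free group $H_1(\Sal(\overline{\A}_H);\Z)$ when $H'=H$ and to $0$ otherwise; together with the Betti-number identity $\rk H_1(\Sal(\A);\Q)=\rk(T)+|\A|$ this forces $\{\wg_H\}_{H\in\A}$ to be a $\Z$-basis of $K$. As $\omega_H$ vanishes on every $\wl^{M_i}$ (hence on $H_1(T_c;\Q)$, hence on the summand $H_1(T_c;\Z)$) and is the $\Z$-dual of $\wg_H$ on the complementary summand, it is integral.

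For the inductive step I would argue by cases as in Proposition \ref{prop:def_omega_S_L}. In case (c) ($d>1$, $r<d$) the class $\omega_{S,L}$ is by definition the pullback $\Phi_L^{*}(\omega_{\overline S,\overline L})$ along the cellular map $\Phi_L\colon\Sal(\A)\to\Sal(\AL)$, and $\AL$ lives in $\overline T=T/L_0$, a torus of dimension $r<d$; so $\omega_{\overline S,\overline L}$ is integral by the inductive hypothesis, hence so is its pullback. In case (d) ($d>1$, $r=d$) one first treats $\A=\A_S$: here $\A_0$ is boolean, the arguments of \cite{caldel17} apply, and by the uniqueness in Proposition \ref{prop:def_omega_S_L} the class $\omega_{S,L}$ coincides with the integer class furnished by \cite[Thm.~A]{caldel17}. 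The general $\A$ in case (d) reduces to this one via the cellular map $\Psi\colon\Sal(\A)\to\Sal(\A_S)$ of Lemma \ref{lem:sottoarrangiamento}, of which $\omega_{S,L}$ is the pullback. This covers all cases.

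The step I expect to be the real obstacle is the base case: establishing that $\{\wg_H\}_{H\in\A}$ is a $\Z$-basis of $K$ (equivalently, that the classes $\omega_H$ are integral). Everything after that is formal bookkeeping — stability of integral classes under cup product and under pullback along the cellular maps $\Phi_L$ and $\Psi$ — combined with the integrality, already recorded in \cite{caldel17}, of the combinatorial classes in the boolean situation.
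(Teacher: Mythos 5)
Your handling of the base case and of case (c) matches the paper's in spirit (the paper simply asserts without proof that the $\omega_H$ are integer classes in dimension one, so your more detailed argument there is a minor supplement; case (c), reducing to the pullback of a lower-dimensional integral class, is identical). The real divergence, and the real gap, is in case (d).

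Your argument for the case $\A=\A_S$, $r=d$ is: ``by the uniqueness in Proposition \ref{prop:def_omega_S_L} the class $\omega_{S,L}$ coincides with the integer class furnished by \cite[Thm.~A]{caldel17}.'' This presumes that Theorem~A of \cite{caldel17} supplies an \emph{integer} class. But look at how the paper's own proof of Proposition \ref{prop:def_omega_S_L} invokes that result: it says ``we can choose a class $\omega_{S,L}\in H^r(\Sal(\A);\Q)$\,'' --- explicitly working over $\Q$. The whole point of Proposition \ref{prop:classi_intere}, case (d), is precisely to prove the integrality that you are importing wholesale from \cite{caldel17}. Since the present paper exists to correct errors in \cite{caldel17} and therefore systematically avoids relying on the integrality claims there (the Appendix reproves only the rational coherence statement, Theorem \ref{thm:coherent}, and \cite[Remark~2.1]{erratum_corto} is cited only to validate that weaker statement), citing the integrality of the Theorem~A classes is exactly the move the paper declines to make. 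Without an independent proof, your case (d) is circular relative to the paper's corrective aims.

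The paper closes this gap with a genuinely different argument that you do not reproduce: for a layer $Y$ with nontrivial stabilizer $\NN=\Stab_{\G_S}(Y)$, it passes to the quotient $\Sal(\A)\to\Sal(\A)/\NN$, observes that $\omega_S$ is $\NN$-invariant and hence descends to an integer class $\overline\omega_S$ on the quotient, and then uses that the $m$-fold covering $Y\to Y/\NN$ induces multiplication by $m=|\NN|$ on top-degree cohomology of the torus factor. This shows the restriction of $\omega_S$ to $\Sc_{Y,F}$ is $m$ times an integer class, hence the restriction of $\omega_{S,L}$ (which equals $\omega_S/m$ there) is integral, whence one concludes via Theorem \ref{thm:iniettivo}. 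That covering argument is the substantive content of the case-(d) proof, and it is missing from your proposal.

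A secondary point: your base-case claim that $\{\wg_H\}_{H\in\A}$ is a $\Z$-basis of $K$ follows from the Betti count together with the maps $(\Phi_{H'})_*$ needs a little more care than stated --- knowing each $\wg_H$ maps to a primitive element in one quotient and to zero in the others shows linear independence but does not by itself show generation over $\Z$ unless one also controls the image of $K$ in the direct sum. In dimension $1$ this can be checked directly (the complement is a punctured cylinder and the loops around punctures do form a $\Z$-basis of the kernel), which is presumably why the paper treats case (b) as immediate; but your phrasing suggests the Betti identity alone ``forces'' the basis property, which is not quite accurate as stated.
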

\begin{proof}
Following the same pattern of the proof of Proposition \ref{prop:def_omega_S_L}, we can prove the result by induction.
The claim follows immediately for the cases a) (since $1$ is an integer class) b) (since the classes $\omega_{H}$ are integer classes) and c) since the pull back of an integer class is an integer class.

Concerning case d), since the restriction of an integer class is an integer class, we can assume that $\A = \A_S$.
We will show that for a given layer $Y$ such that $L\subset Y$ and $|\Stab_{\G_S}(Y)| \neq 1$ the restriction of the class $\omega_{S,L}$ in $H^*(\Sc_{Y,F};\Q)$ is an integer class.
We consider the group $\NN:=\Stab_{\G_S}(Y)$ and the quotient $\Sal(\A) \to \Sal(\A)/\NN$.
We have the commutative diagram
$$
\begin{tikzcd}
\Sal(\A) \arrow[r]&\Sal(\A)/{\NN}\\
\gre{\Sc_{Y,F}} \arrow[u, hookrightarrow]\arrow[r, "\pi_{\NN}"] & \gre{\Sc_{Y,F}}/{\NN}\arrow[u,hookrightarrow]
\end{tikzcd}
$$
and the homotopy equivalences
$$
\begin{tikzcd}
\gre{\Sc_{Y,F}} \arrow[d,"\simeq"]\arrow[r, "\pi_{\NN}"] & \gre{\Sc_{Y,F}}/{\NN} \arrow[d,"\simeq"]\\
\Sal(\A[Y]) \times Y \arrow[r] & \Sal(\A[Y]) \times (Y/{\NN})
\end{tikzcd}
$$
Let $m= |\Stab_{\G_S}(Y)|$.
Notice that when $i=\dim(Y)$ the cohomology map $$H^i(Y/\NN;\Q) \to H^i(Y;\Q)$$ induced by the $m$-fold covering $Y \to Y/\NN$ is the multiplication by $m$.

Recall that we are assuming that $r=\rk (T) = \rk( \A) = |S|$ and hence $i  = \dim Y = r = \rk(Y)$.

The class $\omega_S \in H^r(\Sal(\A);\Z)$ is $\NN$-invariant and hence it is the pullback of an integer class $\overline{\omega}_{S} \in H^r(\Sal(\A)/\NN;\Z)$. Moreover the class $\overline{\omega}_{S}$ restricts to a class  $\beta \in H^r(\Sc_{Y,F}/\NN;\Z) \simeq H^{\rk(Y)}(\Sal(\A[Y]);\Z) \otimes H^i(Y/\NN;\Z)$. This implies that $\varphi_{Y,F}^*(\omega_{S}) = \pi_\NN^*(\beta)$ is $m$ times an integer class in $H^r(\Sc_{Y,F};\Z)$ and hence $\omega_{S,L}$ restricts to an integer class in $\Sc_{Y,F}$. From the injectivity of the map $$H^*(\Sal(\A);\Z) \to \bigoplus_{Y \in \Cc} H^*(\Sc_Y;\Z)$$
it follows that the class $\omega_{S,L}$ is an integer class.
\end{proof}


\begin{Thm}\label{thm:generano}
Let $\A$ be an essential toric arrangement in $T$. 
The integer cohomology ring $H^*(\Sal(\A);\Z)$ is generated as a module over $H^*(T;\Z)$ by the classes
$\omega_{S,L}$ for $L \in \Cc(\A)$ and $S \in \mathrm{nbc}(\A[L])$. 
\end{Thm}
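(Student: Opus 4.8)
The plan is to prove the equivalent assertion that the graded $H^*(T;\Z)$-submodule $N\subseteq H^*(\Sal(\A);\Z)$ generated by all classes $\omega_{S,L}$ --- over $L\in\Cc(\A)$ and $S\in\mathrm{nbc}(\A[L])$ of length $\rk L$, i.e.\ over the pairs for which Proposition~\ref{prop:def_omega_S_L} produces a class --- is all of $H^*(\Sal(\A);\Z)$. I will combine five ingredients: the injectivity of $\phi:=\bigoplus_{L}\incl_L^*$ (Theorem~\ref{thm:iniettivo}); the restriction formulas of Proposition~\ref{prop:def_omega_S_L}; the integrality of the $\omega_{S,L}$ (Proposition~\ref{prop:classi_intere}); the description of $H^*(\Sc_L;\Z)$ as $H^*(L;\Z)\otimes H^*(\Sal(\A[L]);\Z)$ together with its $\mathrm{nbc}$-basis (Lemma~\ref{lem:cohom_modulo}, and freeness of $H^*(\Sal(\A);\Z)$, Remark~\ref{rem:tor_free}); and the Betti-number identity
\[
\dim_\Q H^*(\Sal(\A);\Q)=\sum_{L\in\Cc(\A)}2^{\,d-\rk L}\dim H^{\rk L}(M(\A[L]);\Q)
\]
recalled in the proof of Theorem~\ref{thm:iniettivo}.

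First I would fix, for each layer $L$, a set $\mathcal L_L$ of $2^{\,d-\rk L}$ integral classes in $H^*(T;\Z)$ restricting to a $\Z$-basis of $H^*(L;\Z)$ (possible since $H^*(T;\Z)\twoheadrightarrow H^*(L;\Z)$ splits), and let $N'\subseteq N$ be the $\Z$-span of the classes $\omega_{S,L}\cdot\lambda$ with $\lambda\in\mathcal L_L$; these are integral by Proposition~\ref{prop:classi_intere}, so it is enough to show $N'=H^*(\Sal(\A);\Z)$. Following the proof of Theorem~\ref{thm:iniettivo}, I would filter both $A^*:=H^*(\Sal(\A);\Z)$ and $B^*:=\bigoplus_{L}H^*(\Sc_L;\Z)$ by the powers of the ideal generated by the image of $H^1(T;\Z)$ and consider the induced map of bigraded groups $\bar\phi\colon\Gr A^*\to\Gr B^*$. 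By Lemma~\ref{lem:cohom_modulo}, where the filtration on $H^*(\Sc_L;\Z)$ records the $H^*(L)$-degree, $\Gr B^*$ is a \emph{free} $\Z$-module with basis the tensors $\omega_S(L)\otimes\mu$ ($S\in\mathrm{nbc}(\A[L])$, $\mu$ in a basis of $H^*(L;\Z)$). Now Proposition~\ref{prop:def_omega_S_L} --- used together with the elementary fact that $\Stab_{\G_S}(L)$ is trivial whenever $\rk L=\rk(\A_S)$ (a connected component of $\bigcap S$ is a coset of the identity component of $\bigcap_{H\in S}\ker\chi_H$, hence has setwise stabiliser in $T$ exactly that identity component) --- shows that $\bar\phi$ sends the symbol of $\omega_{S,L}\cdot\lambda$ to the basis vector $\omega_S(L)\otimes\bar\lambda$ in the $L$-summand, to a $\Z$-combination of basis vectors supported in the $L'$-summands with $L'\subsetneq L$, and to $0$ in all remaining summands. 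Ordering $\Cc(\A)$ by rank, the matrix of $\bar\phi$ on these symbols is therefore in block echelon form for reverse inclusion, with the leading entry of each row a distinct basis vector with coefficient $1$; hence the symbols of the $\omega_{S,L}\cdot\lambda$ are $\Z$-linearly independent and extend to a $\Z$-basis of the free module $\Gr B^*$.

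It follows that the classes $\phi(\omega_{S,L}\cdot\lambda)$ themselves form part of a $\Z$-basis of the filtered free module $B^*$, and that the $\omega_{S,L}\cdot\lambda$ are $\Z$-linearly independent in $A^*$; by the Betti count there are exactly $\dim_\Q H^*(\Sal(\A);\Q)=\operatorname{rank}_\Z A^*$ of them, so $A^*/N'$ is finite. To see $A^*/N'=0$ I would check it is torsion-free: if $p$ is prime and $px\in N'$ with $x\in A^*$, write $p\,\phi(x)=\sum_i c_i\,\phi(\omega_{S_i,L_i}\cdot\lambda_i)$ in $B^*$; since the $\phi(\omega_{S,L}\cdot\lambda)$ are part of a $\Z$-basis of the free module $B^*$, expanding $\phi(x)$ in that basis forces $p\mid c_i$ for every $i$ and annihilates the remaining coordinates, so $\phi(x)=\phi\bigl(\sum_i(c_i/p)\,\omega_{S_i,L_i}\cdot\lambda_i\bigr)$, and injectivity of $\phi$ (Theorem~\ref{thm:iniettivo}) yields $x\in N'$. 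Hence $A^*/N'$ is finite and torsion-free, therefore $0$, and $N'=N=H^*(\Sal(\A);\Z)$, which is the assertion.

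The step I expect to be the main obstacle is the passage from rational to integral generation. Rationally the result drops out of the Betti count, but integrally one must exclude torsion in $H^*(\Sal(\A);\Z)/N$; this is precisely what forces one to carry the symbols of the generators with $\Z$-coefficients and, crucially, to verify that the stabiliser index occurring in Proposition~\ref{prop:def_omega_S_L} equals $1$ on the ``diagonal'' layer $L$, so that the leading term of each generator is a genuine $\mathrm{nbc}$-basis element of the integral Orlik--Solomon algebra $H^{\rk L}(M(\A[L]);\Z)$ and not a proper multiple of one; the off-diagonal contributions, governed by the stabilisers of strictly smaller layers, are then harmless thanks to the echelon structure.
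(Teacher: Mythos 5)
Your proposal is correct and follows essentially the same route as the paper: filter $H^*(\Sal(\A);\Z)$ and $\bigoplus_L H^*(\Sc_L;\Z)$ by powers of the ideal generated by $H^1(T)$, pass to the associated graded, and combine the injectivity of $\bigoplus_L\incl_L^*$ (Theorem~\ref{thm:iniettivo}) with the restriction formulas of Proposition~\ref{prop:def_omega_S_L}, the integrality from Proposition~\ref{prop:classi_intere}, Lemma~\ref{lem:cohom_modulo}, and the Betti-number count. Your echelon-form and torsion-freeness argument simply makes explicit what the paper asserts more tersely, namely the equality $\overline{\Phi}(\Gr(A'_\Z))=\overline{\Phi}(\Gr(A_\Q))\cap\Gr(B_\Z)$, and you helpfully spell out the triviality of $\Stab_{\G_S}(L)$ for $|S|=\rk L$, which the paper uses implicitly when it states that $\overline{\Phi}$ sends the symbol of $\omega_{S,L}$ to $\omega_S(L)$ rather than to a proper rational multiple of it.
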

\begin{proof}
Since we have proved in Theorem \ref{thm:iniettivo} that the map $\Phi = \oplus_{L \in \Cc(\A)} \varphi_L$ is injective over $\Z$, it will be enough to show that the image of the homomorphism of algebras
$$
\bigoplus_{L \in \Cc(\A)} \incl_L: H^*(\Sal(\A);\Z) \to \bigoplus_{L \in \Cc(\A)} H^*(\Sc_{L};\Z)
$$
is the $H^*(T;\Z)$-module generated by the restrictions of the classes $\omega_{S,L}$ for $L \in \Cc(\A)$ and $S \in \mathrm{nbc}(\A[L])$.

We keep the notation of the proof of Theorem \ref{thm:iniettivo}. Let $A'_R$ the sub-$H^*(T;R)$-module of $A_R$ generated by the classes $\omega_{S,L}$ for $L \in \Cc(\A)$ and $S \in \mathrm{nbc}(\A[L])$. Using the injectivity of $\Phi:A^*_\Z \to B^*_\Z$ and the fact that the surjectivity of $\overline{\Phi}:\Gr(A'_R) \to \overline{\Phi}(\Gr(A_R))$ implies the surjectivity of $\Phi:A'_R \to \Phi(A_R)$, in order to prove that $A'_\Z = A_\Z$ we will show that 
$$
\overline{\Phi}(\Gr(A'_\Z)) = \overline{\Phi}(\Gr(A_\Q)) \cap \Gr(B_\Z).
$$
In fact, since we have the inclusions $$\overline{\Phi}(\Gr(A'_\Z)) \subset \overline{\Phi}(\Gr(A_\Z)) \subset \overline{\Phi}(\Gr(A_\Q)) \cap \Gr(B_\Z),$$ the equality between the first and the last term implies the equality between the first and the second one.
As we have seen in the proof of Theorem \ref{thm:iniettivo}, 
$\overline{\Phi}$ maps the class of $\omega_{S,L}$ in $\Gr_0(H^{\rk(L)}(\Sal(\A);\Z))$ 
to the class $\omega_S(L)$ in $\Gr_0(H^{\rk(L)}(\Sc_{L};\Z))$. 
Since $H^*(\Sc_{L};\Z) \simeq H^*(M(\A[L]);\Z)\otimes H^*(L)$ 
we have that the set of classes $\omega_S(L)$ for $S \in \mathrm{nbc}(\A[L])$ is a set of generators of 
$$
\bigoplus_i \Gr_i(H^{\rk(L)+i}(\Sc_{L};\Z)) \simeq \bigoplus_i H^{\rk(L)}(M(\A[L]);\Z)\otimes H^i(L)
$$
as a $H^*(T;\Z)$-module. The sum of these modules, for $L \in \Cc(\A)$, is the intersection $\overline{\Phi}(\Gr(A_\Q)) \cap \Gr(B_\Z)$. Hence the claim follows.
\end{proof}

\begin{es}\label{es:es}
	As an example of our result we provide an explicit description of the cohomology of the complement of the toric arrangement $\A = \{H_0, H_1, H_2\}$ in $T=(\CCC^*)^2$ given by:
	$$
	H_0=\{z \in T | z_0=1\}; \quad H_1=\{z \in T | z_0z_1^2=1\}; \quad H_2=\{z \in T | z_1=1\}.
	$$
	The associated hyperplane arrangement $\A_0$ in $V=\R^2$ is given by the corresponding hyperplanes
	$$
	W_0=\{x \in V | x_0=0\}; \quad W_1=\{x \in V | x_0+ 2 x_1=0\}; \quad W_2=\{x \in V | x_1=0\}.
	$$
	We consider in $\Tc(\A_0)$ the chambers $B_0 = \{x \in V | x_0<0, x_0+2x_1>0 \}$ and $B_1=\{x \in V | x_1>0, x_0+2x_1<0 \}$ (see Figure \ref{fig:es}).
\begin{figure}[htb]
	\begin{adjustbox}{scale=0.9,center}
	\input{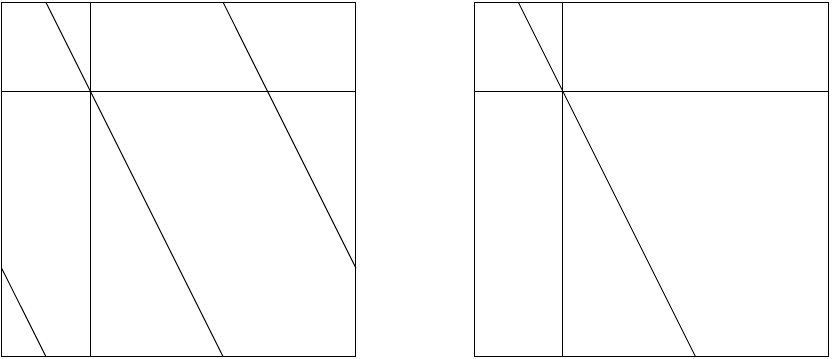_t}	
\end{adjustbox}
\caption{The real picture of the toric arrangement $\A$ of Example \ref{es:es} (on the left) and of the corresponding central arrangement $\A_0$ (on the right).}\label{fig:es}
\end{figure}
The poset of layers $\Cc(\A)$ is given by the elements $T, H_0, H_1, H_2$ and the points $P=\{(1,1)\}$, $Q=\{(1,-1)\}$.
In order to define the subcomplexes $\gre{\Sc_L}$ for $L \in \Cc(\A)$ we need to choose the chamber $B(L)$. We can do this as follows:
$B(H_2) = B_1$, $B(L)=B_0$ for $L \neq H_2$. Moreover for $H \in\A$ choose ${}^HC=B(H)$.
As a basis $\widehat{\mathcal{B}}_T(\A)$ we can choose the set $\{\wl_{B_0}^{H_0}, \wl_{B_0}^{H_2}\}$. All the other choices of basis are natural.
In Table \ref{tab:tab1} we describe  the restriction of each generator of the cohomology of $\Sal(\A)$ to each one of the subcomplexes $\gre{\Sc_L} = \gre{\Sc_{L,F(L)}}$, for $L \in \Cc(\A)$. Cells are empty when a class restricts to zero. The multiplicative structure of the cohomology of $\Sal(\A)$ is induced by the multiplcative structure on each subcompex.
\begin{table}[ht]
\begin{center}
\begin{tabular}{ |C|C|C|C|C|C|C| }
\hline
\Sal(\A) & \Sc_T & \Sc_{H_0} & \Sc_{H_1} &\Sc_{H_2} &\Sc_{P} &\Sc_{Q} \\ 
\hline
\lambda^{H_0}_{B_0} &\lambda^{H_0}_{B_0}&\lambda^{H_0}_{B_0}&\lambda^{H_1}_{B_0}&&&\\
\hline
\lambda^{H_2}_{B_0} &\lambda^{H_2}_{B_0}&&2\lambda^{H_1}_{B_0}&\lambda^{H_2}_{B_1}&&\\
\hline
\omega_{H_0}&&\omega_{H_0}&&&\omega_{H_0}&\omega_{H_0}\\
\hline
\omega_{H_1}&&&\omega_{H_1}&2\lambda^{H_2}_{B_1}&\omega_{H_1}&\omega_{H_1}\\
\hline
\omega_{H_2}&&&&\omega_{H_2}&\omega_{H_2}&\\
\hline
\omega_{\{0,2\},P}&&&&&\omega_{H_0}\omega_{H_2}&\\
\hline
\omega_{\{1,2\},P}&&&&\lambda^{H_2}_{B_1}\omega_{H_2}&\omega_{H_1}\omega_{H_2}&\\
\hline
\omega_{\{1,2\},Q}&&&&&&\omega_{H_1}\omega_{H_2}\\
\hline
\end{tabular}
\end{center}
\caption{}\label{tab:tab1}
\end{table}

\end{es}

\newpage

\appendix \section{}

The following result is a special case of Theorem 4.2.17 of \cite{caldel17}. Notice that Theorem 4.2.17 of \cite{caldel17} does not hold in general (see Remark 2.1 in \cite{erratum_corto}).


\begin{Thm} \label{thm:coherent}
	Suppose that there is a chamber $B$ of $\A_0$ such that for every layer $L\in \Cc$ the face $F_0:=X_L\cap\overline{B}$ is full-dimensional in $X_L$. 
	For every $L\in \Cc$ let $B:=B(L)$.
	
	Fix an integer $q$ and let $L$ be a layer with $\rk(L) > q$. Consider the set 
	$(\Cc_{\leq L})_q$  of all the layers $L'$ such that 
	$L \subseteq L'$ and
	$q = \operatorname{rk}(L')$. 
	The following diagram of 
	groups is commutative.
	$$
	\xymatrix{
		H^*(\Sal(\A);\Z)\ar[drr]_{\varphi_L^*} \ar[rr]^(.35){\underset{L' \in (\Cc_{\leq L})_q}{\bigoplus}\!\!\!\!\!\varphi_{L'}^*}
		&& \underset{L' \in (\Cc_{\leq L})_q}{\bigoplus}
		\!\!\!H^q(M(\Ar{L'});\Z) \otimes H^*(L';\Z) 
		\ar[d]
		\\
		&& 
		H^q(M(\Ar{L});\Z) \otimes H^*(L;\Z)
	}
	$$
	Here the maps $\varphi_L$ and $\varphi_{L'}$ are defined in Definition \ref{def_varphi_L} and the vertical map is naturally induced by the inclusion $L\subseteq L'$ and the Brieskorn Lemma \cite[Proposition 3.3.3]{caldel17} for the hyperplane arrangements $\A[L']\subseteq \A[L]$.
	\end{Thm}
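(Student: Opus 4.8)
\emph{Proof idea.} I would evaluate both composites of the triangle on an arbitrary class $x\in H^*(\Sal(\A);\Z)$ and compare the two outputs inside $H^q(M(\Ar{L});\Z)\otimes H^*(L;\Z)$, using the product homotopy type of the subcomplexes $\gre{\Sc_L}$ together with the Brieskorn decomposition. By \cite[Lemma~4.2.15]{caldel17} (see Remark~\ref{htprod}) there is a homotopy equivalence $\gre{\Sc_{L,F(L)}}\simeq|\Fc(\A^L)|\times\Sal(\Ar{L})$, hence a Künneth isomorphism $H^*(\Sc_L;\Z)\cong H^*(M(\Ar{L});\Z)\otimes H^*(L;\Z)$ (no Tor summand, since $H^*(M(\Ar{L});\Z)$ and $H^*(L;\Z)$ are free, as is $H^*(\Sal(\A);\Z)$ by Remark~\ref{rem:tor_free}), and likewise with $L$ replaced by any $L'\in(\Cc_{\le L})_q$. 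Decomposing the first tensor factor by the Brieskorn Lemma \cite[Proposition~3.3.3]{caldel17} gives $H^q(M(\Ar{L});\Z)\cong\bigoplus_{L'\in(\Cc_{\le L})_q}H^q(M(\Ar{L'});\Z)$, the $L'$-summand being the image of the Brieskorn monomorphism $j_{L,L'}$ attached to the inclusion of complements $M(\Ar{L})\into M(\Ar{L'})$; here $\Ar{L'}\subseteq\Ar{L}$ because $L\subseteq L'$ forces $\A_{L'}\subseteq\A_L$, and the rank-$q$ flats of $\Ar{L}$ are precisely the flats $X_{L'}$ with $L'\in(\Cc_{\le L})_q$, for which $(\Ar{L})_{X_{L'}}=\Ar{L'}$. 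Under these identifications the vertical arrow of the theorem is $\bigoplus_{L'}(j_{L,L'}\otimes\rho_{L,L'})$, with $\rho_{L,L'}\colon H^*(L';\Z)\to H^*(L;\Z)$ the restriction along $L\into L'$, and the diagonal $\varphi_L^*$ of the statement is to be read as $\mathrm{pr}_q\circ\varphi_L^*$, where $\mathrm{pr}_q$ is the projection onto the degree-$q$ Brieskorn piece. Thus the triangle commutes if and only if, for every $x$ and every $L'\in(\Cc_{\le L})_q$, the $L'$-component of $\mathrm{pr}_q(\varphi_L^*x)$ equals $(j_{L,L'}\otimes\rho_{L,L'})$ applied to the top part of $\varphi_{L'}^*x$ (the part of degree $q$ in $M(\Ar{L'})$, which is the top degree since $\rk L'=q$).

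\emph{The geometric input (where coherence is used).} Fix $L\subseteq L'$. Then $\Fc(\A^L)\subseteq\Fc(\A^{L'})$, since a face contained in $L$ is contained in $L'$. Moreover the coherence hypothesis lets us take $B(L)=B(L')=B$, so that $F(L)=\overline B\cap X_L$ and $F(L')=\overline B\cap X_{L'}$ are well-defined faces, of full dimension in $X_L$ respectively $X_{L'}$; and for every local arrangement $\A[F]$ with $F\subseteq L$ one has $F(L)\le F(L')$ in $\Fc(\A[F])$ (their nonzero signs both agree with those of $B$, while $\A[F]_{X_{L'}}\subseteq\A[F]_{X_L}$), hence $\Sc^{F(L')}(\A[F])\subseteq\Sc^{F(L)}(\A[F])$. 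It follows that the intersection $\gre{\Sc_L}\cap\gre{\Sc_{L'}}$ is the subcomplex $\gre{\int\mathscr D''}$, where $\mathscr D''$ is the subdiagram of $\mathscr D$ over the index category $\Fc(\A^L)$ with fibres $\Sc^{F(L')}(\A[F])$. Running the argument of \cite[\S4.2]{caldel17} for $\mathscr D''$ yields a homotopy equivalence $\gre{\Sc_L}\cap\gre{\Sc_{L'}}\simeq L\times M(\Ar{L'})$ under which the inclusion into $\gre{\Sc_L}$ induces $r_{L,L'}\otimes\mathrm{id}_{H^*(L)}$, with $r_{L,L'}\colon H^*(M(\Ar{L}))\twoheadrightarrow H^*(M(\Ar{L'}))$ the Brieskorn projection, and the inclusion into $\gre{\Sc_{L'}}$ induces $\mathrm{id}_{H^*(M(\Ar{L'}))}\otimes\rho_{L,L'}$; and both inclusions, composed with $\varphi_L$ respectively $\varphi_{L'}$, equal one and the same inclusion $\psi_{L,L'}\colon\gre{\Sc_L}\cap\gre{\Sc_{L'}}\into\Sal(\A)$. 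I expect this to be the main obstacle: establishing the homotopy equivalence with $L\times M(\Ar{L'})$ together with the two displayed cohomology identifications is precisely what fails for a non-coherent chamber, where $F(L)\le F(L')$ may no longer hold and the intersection need not be of product type (cf.\ \cite[Remark~2.1]{erratum_corto}).

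\emph{Diagram chase.} From $\psi_{L,L'}^*=(\text{incl}_{\Sc_L})^*\circ\varphi_L^*=(\text{incl}_{\Sc_{L'}})^*\circ\varphi_{L'}^*$ and the two identifications above we obtain
$$(r_{L,L'}\otimes\mathrm{id})(\varphi_L^*x)=(\mathrm{id}\otimes\rho_{L,L'})(\varphi_{L'}^*x)\qquad\text{in } H^*(M(\Ar{L'}))\otimes H^*(L).$$
Now compare the components of degree $q$ in $M(\Ar{L'})$. On the right-hand side this component is $(\mathrm{id}\otimes\rho_{L,L'})$ of the top part of $\varphi_{L'}^*x$, because $\mathrm{id}$ preserves degree. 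On the left-hand side, $r_{L,L'}$ is degree-preserving and annihilates every Brieskorn summand $H^{\rk X}(M((\Ar{L})_X))$ of $\varphi_L^*x$ with $X\not\supseteq X_{L'}$; since the only rank-$q$ flat $X$ of $\Ar{L}$ with $X\supseteq X_{L'}$ is $X_{L'}$ itself, and since $r_{L,L'}$ restricted to the $X_{L'}$-summand is inverse to $j_{L,L'}$, the degree-$q$ component of $(r_{L,L'}\otimes\mathrm{id})(\varphi_L^*x)$ is $(j_{L,L'}^{-1}\otimes\mathrm{id})$ applied to the $L'$-component of $\mathrm{pr}_q(\varphi_L^*x)$. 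Substituting this into the displayed equality and applying $j_{L,L'}\otimes\mathrm{id}$ to both sides gives
$$\bigl(L'\text{-component of }\mathrm{pr}_q(\varphi_L^*x)\bigr)=(j_{L,L'}\otimes\rho_{L,L'})\bigl(\text{top part of }\varphi_{L'}^*x\bigr),$$
which is the criterion isolated at the end of the first paragraph. Summing over $L'\in(\Cc_{\le L})_q$ and reassembling $H^q(M(\Ar{L}))=\bigoplus_{L'}H^q(M(\Ar{L'}))$ yields the commutativity of the triangle; since all modules in sight are free over $\Z$, the argument is valid integrally as stated.
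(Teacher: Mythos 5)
Your proof is correct and follows essentially the same route as the paper: coherence yields $F_0\le F_0'$, the intermediate subdiagram $\mathscr D''$ (the paper's $\mathscr D_{L,F_0'}$) sits inside both $\GC\mathscr D_{L,F_0}$ and $\GC\mathscr D_{L',F_0'}$, the product homotopy equivalence of Remark~\ref{htprod} identifies its cohomology, and the Brieskorn Lemma lets you reassemble the degree-$q$ pieces. The paper packages the argument as a single large commuting diagram whose bottom-right arrow is inverted via the Brieskorn isomorphism, whereas you phrase it as an element-wise chase through the intersection $\gre{\Sc_L}\cap\gre{\Sc_{L'}}$, but the content and the key ingredients coincide.
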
 

\begin{proof}
 Let $L$ and $L'$ be as in the claim and write $F_0:=\overline{B}\cap X_L$, resp.\ $F_0':=\overline{B}\cap X_{L'}$. The assumption on $B$ ensures that $F_0 \leq F_0'$. This implies that $\Sc^{F_0'}(\A[F])$ is a subcategory of $\Sc^{F_0}(\A[F])$ for all $F\in \Fc(\A^L)$ -- see \eqref{def_subposets_local} for definitions.   Now we can consider $i_{F_0'}:\Fc(\A[L']) \to \Fc(\A[L])$ that maps $\Fc(\A[L'])$ identically to $\Fc(\A[L])_{\leq F_0'}$.
 
 We have a commutative diagram
 	\begin{equation} \label{eq:commute-easy1}
		\begin{split}
			\xymatrix{
				\Sc^{F_0'}(\A[F]) 
				\ar[d]_{[G,C]\mapsto[G_{L'},C_{L'}]} \ar[r]^{\iota} & \Sc^{F_0}(\A[F]) \ar[d]^{[G,C]\mapsto[G_{L},C_{L}]}\\
				\Sc(\A[L']) \ar[r]^{j_{F_0'}}& \Sc(\A[L])
			} 
		\end{split}
 	\end{equation}
where $j_{F'_0}$ maps $[G,C] \mapsto [i_{F_0'}(G),i_{F_0'}(C)]$ and the vertical maps are homotopy equivalences (\cite[Proposition 3.3.5]{caldel17}). 

Extending the notation of Definition \ref{df:subcatD} 
we can consider the subdiagram $\mathscr D_{L,F_0'}$
of $\mathscr D$ induced on the subcategory $\Fc(\A^L)$ of $\Fc(\A)$ by the subposets $\mathscr D_{L,F_0'}(F):= \Sc^{F_0'}(\A[F])$. The nerve $\gre{\mathscr D_{L,F_0'}}$
is a subcomplex of both $\Sc_{L',F_0'} = \gre{\GC \mathscr D_{L',F_0'}}$ and $\Sc_{L,F_0} = \gre{\GC \mathscr D_{L,F_0}}$.

The homotopy equivalences referred to in Remark  ~\ref{htprod}  can be naturally extended to $\gre{\mathscr D_{L,F_0'}}$.
Let $\Theta$ be the functor that realizes these homotopy equivalences.
The functor $\Theta$ fits into the commutative diagram \eqref{eq:commute-easy}. The 
left side
commutes because it consists of inclusions. The top-right square commutes trivially from the Definition and the bottom-right square commutes because of Diagram \eqref{eq:commute-easy1}. 
	\begin{equation} \label{eq:commute-easy}
		\begin{split}
			\xymatrix{&& 
				\ar[dll]^{\varphi_{L'}}
				\gre{\GC \mathscr D_{L',F_0'}}
				\ar[r]^(.37){\Theta}_(.37){homeq}
				&
				\gre{\Sc(\Ar{L'})} \times \gre{\Fc(\A^{L'})} \\
				\Sal(\A) 
				&&
				\gre{\int \mathscr D_{L,F_0'}}\ar[u]^{\iota}\ar[d]^{\iota}
								\ar[r]^(.37){\Theta}_(.37){homeq}
				&
				\ar[u]_{\id \times \iota }
				\gre{\Sc(\Ar{L'})\times \gre{ \Fc(\A^{L})}}
				\ar[d]^{ j_{F_0'}\times \id}\\
				&& 
				\ar[ull]^{\varphi_{L}}
				\gre{\GC \mathscr D_{L,F_0}} 
				\ar[r]^(.37){\Theta}_(.37){homeq}
				&
				\gre{\Sc(\Ar{L})}  \times \gre{\Fc(\A^{L})} \\
			}
		\end{split}
	\end{equation}

The horizontal maps of the Diagram \eqref{eq:commute-easy} define isomorphisms in cohomology, we can consider their inverses.	Passing to cohomology we obtain the following commutative diagram:
		\begin{equation} \label{eq:commutecohom}
		\begin{split}
			\xymatrix{
				& & &  H^*(M(\Ar{L'});\Z) \otimes H^*(L';\Z)    \ar[d]^{ \iota^* \otimes \id}\\
				H^*(\Sal(\A) ;\Z) \ar[rrru]^(.40){\varphi_{L'}^*} \ar[rrrd]_{\varphi_L^*}                  & & & H^*(M(\Ar{L'});\Z) \otimes H^*(L;\Z)  \\
				& & & H^*(M(\Ar{L});\Z) \otimes  H^*(L;\Z) \ar[u]_{ j_{F_0'}^*\otimes \id } \\ 
			}
		\end{split}
	\end{equation}
	Now Diagram \ref{eq:commutecohom2} is obtained by  projecting the first factor of the tensor products for the terms in the right side of diagram \eqref{eq:commutecohom} to cohomological degree $q$
	and by taing the direct sum for all $L' \in (\Cc_{\leq L})_q.$ 
	\begin{equation} \label{eq:commutecohom2}
		\begin{split}
			\xymatrix{
				& & & \underset{L' \in (\Cc_{\leq L})_q}{\bigoplus}
				\!\!\! H^q(M(\Ar{L'});\Z) \otimes H^*(L';\Z) \ar[d]^{\id \otimes \iota^*}\\
				H^*(\Sal(\A) ;\Z) \ar[rrru]^(.40){\underset{L' \in (\Cc_{\leq L})_q}{\bigoplus}
					\!\!\! \varphi_{L'}^*} \ar[rrrd]_{\varphi_L^*}                  
				& & & \left( \underset{L' \in (\Cc_{\leq L})_q}{\bigoplus}
				\!\!\! H^q(M(\Ar{L'});\Z)  \right) \otimes H^*(L;\Z)  \\
				& & & H^q(M(\Ar{L});\Z) \otimes H^*(L;\Z) \ar[u]_{\left( \underset{L' \in (\Cc_{\leq L})_q}{\bigoplus}
					\!\!\! j_{F_0'}^* \right) \otimes \id}. \\ 
			}\end{split}
		\end{equation}

		From the Combinatorial Brieskorn Lemma \cite[Proposition 3.3.3]{caldel17} the left factor $\underset{L' \in (\Cc_{\leq L})_q}{\bigoplus}
		\!\!\! j_{F_0'}^* $ of the bottom right map of Diagram \eqref{eq:commutecohom2}
		is an isomorphism. 
		Hence we can invert the bottom right arrow and, composing with the upper right map $\id \otimes \iota^*$ we get the natural vertical map.
		Thus, the commutativity of the diagram of the statement of the Theorem follows.
\end{proof}

\bibliography{erratumbib}{}
\bibliographystyle{alpha}

\end{document}